\def\@cite#1#2{{\m@th\upshape\bfseries%
[{#1\if@tempswa{\m@th\upshape\mdseries, #2}\fi}]}}
\theoremstyle{plain}
\newtheorem{thm}{Theorem}[section]
\newtheorem{cor}[thm]{Corollary}
\newtheorem{prop}[thm]{Proposition}
\newtheorem{lem}[thm]{Lemma}
\newtheorem{sublem}[thm]{Sublemma}
\theoremstyle{definition}
\newtheorem{conj}[thm]{Conjecture}
\theoremstyle{remark}
\newtheorem{rem}[thm]{Remark}
\numberwithin{equation}{subsection}
\renewcommand{\bold}[1]{\medskip \noindent {\bf #1 }\nopagebreak}
\newcommand{\nc}{\newcommand}
\newcommand{\rnc}{\renewcommand}
\newcommand{\e}{\varepsilon}
\newcommand{\wrt}[1]{\mathrm{d}{#1}}
\nc\bA{\mathbb{A}}
\nc\bB{\mathbb{B}}
\nc\bC{\mathbb{C}}
\nc\bD{\mathbb{D}}
\nc\bE{\mathbb{E}}
\nc\bF{\mathbb{F}}
\nc\bG{\mathbb{G}}
\nc\bH{\mathbb{H}}
\nc\bI{\mathbb{I}}
\nc{\bJ}{\mathbb{J}} 
\nc\bK{\mathbb{K}}
\nc\bL{\mathbb{L}}
\nc\bM{\mathbb{M}}
\nc\bN{\mathbb{N}}
\nc\bO{\mathbb{O}}
\nc\bP{\mathbb{P}}
\nc\bQ{\mathbb{Q}}
\nc\bR{\mathbb{R}}
\nc\bS{\mathbb{S}}
\nc\bT{\mathbb{T}}
\nc\bU{\mathbb{U}}
\nc\bV{\mathbb{V}}
\nc\bW{\mathbb{W}}
\nc\bY{\mathbb{Y}}
\nc\bX{\mathbb{X}}
\nc\bZ{\mathbb{Z}}
\nc\cA{\mathcal{A}}
\nc\cB{\mathcal{B}}
\nc\cC{\mathcal{C}}
\rnc\cD{\mathcal{D}}
\nc\cE{\mathcal{E}}
\nc\cF{\mathcal{F}}
\nc\cG{\mathcal{G}}
\rnc\cH{\mathcal{H}}
\nc\cI{\mathcal{I}}
\nc{\cJ}{\mathcal{J}} 
\nc\cK{\mathcal{K}}
\rnc\cL{\mathcal{L}}
\nc\cM{\mathcal{M}}
\nc\cN{\mathcal{N}}
\nc\cO{\mathcal{O}}
\nc\cP{\mathcal{P}}
\nc\cQ{\mathcal{Q}}
\rnc\cR{\mathcal{R}}
\nc\cS{\mathcal{S}}
\nc\cT{\mathcal{T}}
\nc\cU{\mathcal{U}}
\nc\cV{\mathcal{V}}
\nc\cW{\mathcal{W}}
\nc\cY{\mathcal{Y}}
\nc\cX{\mathcal{X}}
\nc\cZ{\mathcal{Z}}
\nc{\dmo}{\DeclareMathOperator}
\rnc{\Re}{\operatorname{Re}}
\rnc{\Im}{\operatorname{Im}}
\dmo{\rank}{rank}
\dmo{\End}{End}
\dmo{\Hom}{Hom}
\dmo{\Jac}{Jac}
\dmo{\Id}{Id}
\dmo{\Ann}{Ann}
\dmo{\Area}{Area}
\dmo{\CP}{\bC P^1}
\dmo{\vol}{Vol}
\dmo{\Vol}{Vol}
\dmo{\Isom}{Isom}
\dmo{\logp}{\overline{\log}}
\title{Towards optimal spectral gaps in large genus}
\author[Lipnowski]{Michael~Lipnowski}
\author[Wright]{Alex~Wright}
\begin{document}
\maketitle
\thispagestyle{empty}









\vspace{-0.4cm}
\setcounter{tocdepth}{1} 
\tableofcontents



\vspace{-0.6cm}
\section{Introduction} 

\bold{Main result.} The first non-zero Laplace eigenvalue  $\lambda_1$ of a hyperbolic surface controls the speed of mixing of geodesic flow, the error term in the Geometric Prime Number Theorem, and measures the extent to which the surface is an expander. 

In high genus, the best one can hope for is that $\lambda_1$ is close to $\frac14$. Indeed, if $\Lambda=\limsup_{g\to \infty} \Lambda_g$, where $\Lambda_g$ denotes the maximum value of $\lambda_1$ over  the moduli space  $\cM_g$  of genus $g$ closed hyperbolic surfaces,   then we have  $\Lambda \leq \frac14$  \cite{Huber, Cheng}. 

It is natural to conjecture that $\lambda_1$ is typically close to this optimal value of $\frac14$ in large genus, especially since the corresponding statement for regular graphs is true \cite{Friedman, Bor}. Despite extensive study of Selberg's related eigenvalue $\frac14$ conjecture, it was only proven recently, by Hide and Magee \cite{MageeHide} after this paper was written, that $\Lambda=\frac14$.  However, the behavior of $\lambda_1$ for Weil-Petersson typical random surfacs of large genus remains an open problem.


In this paper, we study this conjecture by averaging the Selberg trace formula over $\cM_g$ and using ideas originating in Mirzakhani's thesis.  We establish the following:

\begin{thm}\label{T:main}
For all $\e>0$, the Weil-Petersson probability that a surface in $\cM_g$ has $\lambda_1<\frac3{16}-\e$ goes to zero as $g\to\infty$. 
\end{thm}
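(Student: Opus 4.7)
The plan is to apply Selberg's trace formula on each $X \in \cM_g$ against a carefully designed non-negative test function $h$, then integrate over $\cM_g$ using Mirzakhani's integration formula for sums of geometric quantities over simple closed curves, and conclude by a Markov-type inequality. More precisely, fix a parameter $L = L(g)$ growing appropriately with $g$ (plausibly of order $\log g$), and construct an even Paley-Wiener test function $h$ whose Fourier transform $\hat h$ is supported in $[-L,L]$, with the properties that $h(r) \ge 0$ on $\bR$ and $h(ir) \ge 1$ uniformly on $r \in [\tau_0, 1/2]$, where $\tau_0 = \sqrt{1/16 + \e}$, so that $1/4 - \tau_0^2 \le 3/16 - \e$. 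A natural candidate is $h = |\psi|^2$ for a rescaled Beurling--Selberg extremal function $\psi$ of exponential type $L/2$.

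The trace formula then reads
\begin{equation*}
\sum_j h(r_j(X)) = \frac{\Area(X)}{4\pi} \int h(r)\, r \tanh(\pi r)\, dr + \sum_\gamma \frac{\ell(\gamma_0)}{2\sinh(\ell(\gamma)/2)} \hat h(\ell(\gamma)).
\end{equation*}
If $\lambda_1(X) < 3/16 - \e$ then the left-hand side is at least $1$ (from the corresponding exceptional eigenvalue alone, the trivial eigenvalue $\lambda_0 = 0$ only adding $h(i/2) \ge 0$), so Markov's inequality gives $\mathbb{P}(\lambda_1 < 3/16 - \e) \le \mathbb{E}_{\cM_g}[\sum_j h(r_j)]$, which by the trace formula equals a deterministic area term plus the expected geodesic sum. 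For the expected geodesic sum, I would apply Mirzakhani's integration formula topological-type by topological-type to handle simple closed curves; combined with the Mirzakhani--Zograf large-genus volume asymptotics this should yield an expected contribution of order $\int_0^L \sinh(\ell/2)\, \hat h(\ell)\, d\ell$. Non-simple closed geodesics are not directly handled by Mirzakhani's formula, so their expected contribution must be bounded separately, for instance by decomposing each such geodesic along its self-intersections into simple multicurves filling subsurfaces of lower complexity.

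The essential difficulty is the exponential tension between the Paley-Wiener bound $|h(it)| \le C e^{L|t|}$ and the expected simple-geodesic sum growing at rate $e^{L/2}$; naively these only balance at $t = 1/2$ (corresponding to the trivial bound $\lambda \ge 0$), so the improvement to the $3/16$ threshold (i.e.\ to $t = 1/4$) must come from sharp extremal-function design exploiting the squaring $h = |\psi|^2$, together with cancellation between the area term and the dominant part of the spectral side. Bounding the non-simple closed geodesic contribution uniformly in the window $\ell \le L$ as $L$ grows (logarithmically) with $g$ is where I expect the most delicate technical work to lie, and where the choice of $L(g)$ must be tuned to ensure that all error terms simultaneously tend to zero.
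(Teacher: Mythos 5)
Your overall architecture is the same as the paper's: average the Selberg trace formula over $\cM_g$ against a test function of exponential type $L\asymp\log g$, use Mirzakhani-type integration to show the expected geodesic sum is close to its "main term," treat non-simple geodesics separately, and optimize $L$. However, as written there are two genuine gaps.

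First, the Markov step is wrong. The trivial eigenvalue does not "only add $h(i/2)\ge 0$": for any $h$ with $\widehat h$ supported in $[-L,L]$ and $h(i\tau_0)\gtrsim 1$, one has $h(i/2)\asymp e^{L/2}$, so $\mathbb{E}[\sum_j h(r_j)]\ge h(i/2)$ is exponentially large and the bound $\bP(\lambda_1<\tfrac3{16}-\e)\le \mathbb{E}[\sum_j h(r_j)]$ is vacuous. You must first subtract the $j=0$ term and show that it cancels against the \emph{expected geodesic sum}, not against the area term (which is only $O(g\cdot\mathrm{poly}(L))$ and cannot cancel anything of size $e^{L/2}$). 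Concretely, the paper's key identity is that $\int_0^\infty F_{\widehat h}(\ell)\,\ell\,\sinh^2(\ell/2)/(\ell/2)^2\,d\ell = h(i/2)+O(\|\widehat h\|_1)$, so after averaging, the geodesic sum and the $\lambda_0$ term cancel up to $O(g^{\kappa-1})h(i/2)+O(1)$. Relatedly, normalizing $h(ir)\ge 1$ on $[\tau_0,1/2]$ is too weak: you need the bad event to contribute $\gtrsim e^{L\tau_0}$ (which a correctly scaled $h$ gives automatically), and then divide; only the ratio $e^{L(1/2-\tau_0)}g^{\kappa-1}+g\,\mathrm{poly}(L)e^{-L\tau_0}$, optimized at $L\approx(4-2\kappa)\log g$, yields the $3/16$ threshold. (Your choice of $h\ge0$ on $\bR$ would let you drop the tempered spectrum and avoid the paper's local Weyl law — a genuine simplification — but only once the cancellation above is in place.)

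Second, the geometric input you invoke is precisely the hard half of the theorem and is not supplied. You need the average of $F_{\mathrm{all}}$ to be $(1+O(g^{-1+\kappa}))I_F$ with a \emph{power-saving relative error}, uniformly for $F$ supported in $[0,4\log g]$; a bound with an unspecified multiplicative constant destroys the cancellation with $h(i/2)$ and gives nothing. This requires (i) the refined Mirzakhani--Zograf volume asymptotics together with a careful treatment of the thin part of $\cM_g$ (the paper does an inclusion--exclusion over short multicurves to control the conditional average on the thick part), and (ii) a new counting theorem for closed geodesics inside the subsurfaces filled by non-simple geodesics: at length scale $\log g$ the naive bound $e^{\ell}$ on the number of geodesics in a filled subsurface is far too large, and the paper proves that on surfaces with no short pants or one-holed tori and systole bounded below, the count is subexponential in $\ell$ ("tangle-free" surfaces). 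Your sentence acknowledging that this is "where the most delicate technical work lies" is accurate, but without these two inputs the proof does not close.
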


The same result was obtained independently by Wu and Xue  \cite{WuXue}. Previously Mirzakhani showed the same result with $\frac3{16}$ replaced with $\frac14\left(\frac{\log(2)}{2\pi+\log(2)}\right)^2 \approx 0.002$  \cite{Mirzakhani:Growth}. Related results for random covers of a fixed surface, again with $\frac3{16}$ appearing, were obtained for closed surfaces by Magee,  Naud, and Puder in \cite{MNP} and for convex cocompact surfaces by Magee and Naud \cite{MN}.


\bold{Idea of the proof.} Our work is inspired by and builds on recent work of Mirzakhani and Petri \cite{MirzakhaniPetri:Lengths}. They fix a constant length $L$, and consider geodesics of length at most $L$. As the genus goes to infinity, they show in particular that, averaged over $\cM_g$,
\begin{enumerate}
\item most geodesics of length at most $L$ are simple and non-separating, and 
\item the number of simple non-separating geodesics of length at most $L$ can be estimated using Mirzakhani's integration formula.
\end{enumerate}

This paper extends these observations to lengths scales $L$ that grow slowly with genus. As the error term in the Geometric Prime Number Theorem suggests,  bounds on the number of geodesics translate into bounds on $\lambda_1$.   

The starting point for the first observation above are computations that show, on average, there aren't too many subsurfaces which a non-simple geodesic of length at most $L$ can fill. Given this, one must show that most such subsurfaces don't have too many closed geodesics. This is more difficult when $L$ grows with genus, and requires that we establish new bounds in Section \ref{S:Bounds}. 

Even though our analysis shows that the contribution of the non-simple geodesics is a lower order term at the length scales we consider, it may be nescessary to better understand this term to move beyond the $\frac3{16}$ barrier. 

\bold{Broader significance.} Our results are broadly applicable to any problem that relates to counting geodesics or that might be studied by averaging the Selberg Trace Formula over $\cM_g$, and provide tools towards improved error terms in limit multiplicity laws \cite{Monk, Seven}, calculations with error terms for the average number of geodesics with lengths in intervals with sizes growing or shrinking with the genus, and first steps towards understanding eigenvalue spacing \cite{Sarnak}. 

Additionally, we believe our results concerning the nature of geodesics of different lengths scales are just the tip of the iceberg. We suggest the following as an accessible starting point for further investigation.  

\begin{conj}
As $g$ goes to infinity, on most surfaces in $\cM_g$ most geodesics  of length significantly less than $\sqrt{g}$ are simple and non-separating, and most geodesics of length significantly greater than $\sqrt{g}$ are not simple.\footnote{Added in proof: this conjecture and its analogue for graphs have been confirmed in \cite{BPsurfaces, BPgraphs}.} 
\end{conj}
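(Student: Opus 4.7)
The threshold $\sqrt{g}$ is the natural Birkhoff ergodic scale: a generic closed geodesic of length $L$ on a hyperbolic surface of area $4\pi(g-1)$ has expected self-intersection number on the order of $L^2/g$, placing the transition between ``typically simple'' and ``typically non-simple'' behavior at $L \asymp \sqrt{g}$. Both halves of the conjecture should be approached by extending the Mirzakhani integration formula analysis underlying this paper to length scales approaching $\sqrt{g}$.

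\textbf{Short geodesics.} For $L = L(g)$ with $L \to \infty$ and $L = o(\sqrt{g})$, Mirzakhani's integration formula expresses the expected number of simple non-separating primitive closed geodesics of length at most $L$ as
\[ \frac{1}{2 V_g} \int_0^L V_{g-1,2}(\ell,\ell)\, \ell \, d\ell. \]
Building on the refined Weil--Petersson estimates of Section \ref{S:Bounds}, I would show this is asymptotic to the Mirzakhani--Petri mean $\int_0^L (\cosh \ell - 1)/\ell \, d\ell$ uniformly in this range. To bound the contribution from non-simple or separating geodesics, I would sum the integration formula over topological types of filling subsurfaces $\Sigma_{h,n}$: each type contributes a geodesic count on $\Sigma_{h,n}$ weighted by a volume ratio of the shape $V_{g-h,n'}/V_g$, and the combinatorial sum over types must be shown to be negligible compared to the main term for $L = o(\sqrt{g})$. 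The factor $L^2/g$ from the ergodic heuristic should appear naturally as the leading correction in this analysis.

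\textbf{Long geodesics.} For $L = L(g)$ with $L \gg \sqrt{g}$, one has $\pi_X(L) \sim e^L/L$ on most surfaces via the prime geodesic theorem made effective by the bound $\lambda_1 \geq \tfrac{3}{16} - \e$ of Theorem \ref{T:main}. To show simple geodesics form a vanishing fraction of this total, I would upper bound the expected number of simple closed geodesics by the sum
\[ \frac{1}{2 V_g}\int_0^L V_{g-1,2}(\ell,\ell)\, \ell \, d\ell \; + \; \sum_{h=1}^{\lfloor g/2\rfloor} \frac{1}{c_h}\int_0^L V_{h,1}(\ell) V_{g-h,1}(\ell)\, \ell \, d\ell, \]
and show it is $o(e^L/L)$ in this range. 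The crucial phenomenon is that $V_{g-1,2}(\ell,\ell)/V_g$ cannot maintain the pointwise equivalent $4\sinh^2(\ell/2)/\ell^2$ indefinitely: as a polynomial in $\ell$ of total degree $6g-8$, its true growth must exit the exponential regime, and the conjecture predicts this exit near $\ell \asymp \sqrt{g}$.

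\textbf{Main obstacle.} The hardest step is precise control of $V_{g-1,2}(\ell,\ell)/V_g$ and its separating analogues in the transition regime $\ell \asymp \sqrt{g}$, where the pointwise Mirzakhani--Petri equivalent breaks down but the polynomial behavior has not yet taken over. Establishing such asymptotics requires Weil--Petersson volume estimates substantially sharper than those in this paper or those supplied by the Mirzakhani--Zograf framework. A secondary obstacle is promoting the first-moment statements to almost-sure statements about ``most surfaces'': second-moment control is needed to handle clustering between nearby simple geodesics and the non-simple configurations they generate as $L$ approaches the critical scale.
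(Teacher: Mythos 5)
This statement is presented in the paper as an open conjecture --- the authors explicitly offer it ``as an accessible starting point for further investigation'' and give no proof --- so there is nothing in the paper to compare your argument against. What you have written is a research plan rather than a proof, and its central steps are exactly the open problems. Concretely: (1) For the short-geodesic half, bounding the non-simple contribution at scale $L=o(\sqrt{g})$ cannot be done with the paper's machinery. A non-simple geodesic of length $L$ fills a subsurface of area up to $4L\to\infty$, so the reduction to finitely many topological types (Corollary \ref{C:subsurface}) fails, and the bound of Theorem \ref{T:GraphBound} has exponent $CL/L_0+3$, which with $L_0\asymp\log g$ blows up once $L\gg\log g$; your ``combinatorial sum over types must be shown to be negligible'' is precisely the missing argument, not a step you have supplied. (2) For the long-geodesic half, the sinh upper bound of Lemma \ref{L:sinh} gives an expected number of simple non-separating geodesics of order $e^L/L$, i.e.\ the same order as $\pi_X(L)$, so the available first-moment upper bound proves nothing; you need a genuine \emph{drop} in $V_{g-1,2}(\ell,\ell)/V_g$ below $4\sinh^2(\ell/2)/\ell^2$ for $\ell\gg\sqrt{g}$, and no such lower-order estimate exists in the paper or in Mirzakhani--Zograf. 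Theorem \ref{T:Coeffs} only bounds the defect $1-[\tau_{\mathbf d}]/V_{g,n}$ from above by $Cn|\mathbf d|^2/g$; at $|\mathbf d|\asymp\sqrt{g}$ this becomes vacuous, and it never shows the defect is \emph{large}.

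Two further imprecisions worth flagging. Your remark that $V_{g-1,2}(\ell,\ell)$, being a polynomial of degree $6g-8$, ``must exit the exponential regime'' is true but does not locate the exit at $\sqrt{g}$: a degree-$N$ Taylor truncation of $e^x$ tracks the exponential until $x\asymp N$, so the degree alone only forces departure at $\ell\asymp g$. The $\sqrt{g}$ threshold has to come from decay of the coefficients $[\tau_{d_1}\tau_{d_2}]_{g,n}/V_{g,n}$ for $|\mathbf d|\gtrsim\sqrt{g}$, which is unproven. Also, the ``refined Weil--Petersson estimates'' are in Section \ref{S:thin} and the appendices, not Section \ref{S:Bounds}. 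None of this means your plan is wrong --- the $L^2/g$ self-intersection heuristic and the first-moment-plus-concentration strategy are the natural route, and your identification of the obstacles is accurate --- but as it stands the proposal establishes neither half of the conjecture.
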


Because error terms are central in our analysis, a proof of this conjecture will not necessarily yield any improvements to Theorem \ref{T:main}. However it would improve our  understanding of high genus surfaces.

\bold{Structure of the proof.} The proof of Theorem \ref{T:main} is  divided into a geometric bound on geodesics  and an argument using the Selberg Trace Formula. We state these results now. 

Given a compactly supported real function $F$, define $F_{\mathrm{all}}:\cM_g \to \bR$ by setting $F_{\mathrm{all}}(X)$ to be the sum of $F$ over the lengths of all oriented closed primitive geodesics on $X$. When not otherwise specified, we refer to the Weil-Petersson measure on $\cM_g$. 

\begin{thm}\label{T:GeometricMain} 
For any constants $D>0$ and $1>\kappa>0$, 
there are subsets $\cM_g'$ of $\cM_g$ such that 
\begin{enumerate}
\item ${\Vol(\cM_g')}/\Vol(\cM_g) \to 1,$
\item every surface in $\cM'_g$ has systole at least $1/\log(g)$, and
\item for any non-negative function $F$ with support in $[0, D\log(g)]$, the average of $F_{\mathrm{all}}$ over $\cM_g'$ is at most 
$$I_F + O(g^{-1+\kappa}I_{\widetilde{F}}), $$
where 
$$I_F = \int_0^\infty F(\ell) \ell  \frac{\sinh(\ell/2)^2}{(\ell/2)^2} d\ell$$
 and $\widetilde{F}(\ell)=\sup_{[\ell-1,\ell+1]}F$.  
\end{enumerate}
\end{thm}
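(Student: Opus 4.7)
I would take $\cM_g'$ to be the intersection of the large-systole locus $\{X : \operatorname{sys}(X) \geq 1/\log g\}$ with a further Markov-type complement excluding surfaces that carry anomalously many non-simple closed geodesics of length at most $D\log g$. The uniform Mirzakhani estimate that the Weil--Petersson probability of $\operatorname{sys}(X) < \varepsilon$ is $O(\varepsilon^2)$, together with a first-moment bound on the count of such non-simple geodesics, gives condition (1), while (2) is built into the definition. Since $F_{\mathrm{all}}\geq 0$, the $\cM_g'$-average of $F_{\mathrm{all}}$ is bounded by the $\cM_g$-average times $\Vol(\cM_g)/\Vol(\cM_g')$, and with a careful choice of the second defining condition this ratio will be $1+O(g^{-1+\kappa})$; the problem therefore reduces to bounding $\int_{\cM_g} F_{\mathrm{all}}\, dX$.

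To do this, I would decompose geodesics by topological type,
\[
F_{\mathrm{all}} = F_{\mathrm{sns}} + F_{\mathrm{ssep}} + F_{\mathrm{nonsimple}}.
\]
Mirzakhani's integration formula applied to the mapping class group orbit of a simple non-separating curve gives
\[
\int_{\cM_g} F_{\mathrm{sns}}\, dX = \int_0^\infty F(\ell)\,\ell\,\Vol(\cM_{g-1,2}(\ell,\ell))\, d\ell
\]
after accounting for orientation. Mirzakhani--Zograf asymptotics yield the pointwise limit $\Vol(\cM_{g-1,2}(\ell,\ell))/\Vol(\cM_g) \to \sinh^2(\ell/2)/(\ell/2)^2$; the key input here is the quantitative refinement of Section~\ref{S:Bounds}, upgrading this to a uniform multiplicative estimate with error $1+O(g^{-1+\kappa})$ throughout $\ell \in [0, D\log g]$, which produces the claimed main term $I_F\,\Vol(\cM_g)\,(1+O(g^{-1+\kappa}))$.

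The simple separating contribution is treated by the analogous integration formula, summed over splittings $h = 1,\ldots,\lfloor g/2\rfloor$; each ratio $\Vol(\cM_{h,1}(\ell))\Vol(\cM_{g-h,1}(\ell))/\Vol(\cM_g)$ is smaller than the sns ratio by a further factor of order $1/g$, so even integrating $\ell$ up to $D\log g$ and summing in $h$, the total is $O(g^{-1+\kappa})\,I_F\,\Vol(\cM_g)$. For the non-simple part, every non-simple primitive closed geodesic of length at most $L$ fills a subsurface $Y$ with $-\chi(Y)$ bounded linearly in $L$. For each topological type $(Y,\gamma)$, Mirzakhani's integration formula expresses the expected contribution as an integral involving the Weil--Petersson volumes of moduli spaces of $Y$ and of its complement in $X$; the relevant volume ratio to $\Vol(\cM_g)$ decays exponentially in $-\chi(Y)$. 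Summing over the polynomially (in $L$) many topological types realizable at length $\leq L = D\log g$, and using the Section~\ref{S:Bounds} bounds on the number of closed geodesics of given length supported on a subsurface of fixed type, yields a total non-simple contribution of $O(g^{-1+\kappa})\,I_F\,\Vol(\cM_g)$.

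The principal obstacle is the non-simple contribution. In the bounded-length setting of Mirzakhani--Petri, only finitely many topological types of filling subsurface arise, and each can be disposed of separately. Here $L = D\log g$ grows, so one needs an estimate uniform in the type, which in turn requires quantitative control on how many closed geodesics of a given length can lie on a subsurface of fixed topology. Establishing these bounds is the technical heart of the argument and is the content of Section~\ref{S:Bounds}; their quality is what determines the final error exponent $\kappa$.
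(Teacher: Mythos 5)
There is a genuine gap in your reduction step, and it is precisely the point the paper spends Section \ref{S:thin} addressing. You bound the average of $F_{\mathrm{all}}$ over $\cM_g'$ by the average over $\cM_g$ times $\Vol(\cM_g)/\Vol(\cM_g')$ and assert that this ratio can be made $1+O(g^{-1+\kappa})$. It cannot: your $\cM_g'$ excludes the set $\cM_g^{<\e}$ of surfaces with systole below $\e=1/\log g$, and this set has Weil--Petersson probability $\asymp \e^2/4 = \Theta(1/\log^2 g)$ (Proposition \ref{P:ThinMeasure}), which dwarfs $g^{-1+\kappa}$. So the crude comparison only yields $(1+\Theta(\log^{-2}g))\,I_F$ for the main (simple non-separating) term, far short of the claimed error. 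To repair this one must show that the thin part carries at least its proportional share of $\int F_{\mathrm{sns}}$, i.e.\ a lower bound $\int_{\cM_g^{<\e}}F_{\mathrm{sns}} \geq (1-g^{-1+o(1)})\Vol(\cM_g^{<\e})\,I_F$, so that conditioning on thickness does not inflate the average; the paper proves exactly this (Theorem \ref{T:ThinExpectedValue} and Corollary \ref{C:ThickAverage}) by an inclusion--exclusion over configurations of short curves combined with Mirzakhani's integration formula. Nothing in your proposal substitutes for this step.

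A second problem: after reducing to $\int_{\cM_g}F_{\mathrm{all}}$, you invoke the Section \ref{S:Bounds} counting bounds for geodesics supported on a subsurface of fixed type. But Theorem \ref{T:GraphBound} is a deterministic bound whose hypotheses (systole at least $\e$, no pants or one-holed tori of total boundary length below $L_0\asymp\log g$) fail on a set of positive measure; it cannot be applied inside an integral over all of $\cM_g$, where thin or tangled subsurfaces can carry on the order of $e^{\ell}$ geodesics of length $\ell$, i.e.\ up to $g^{2D}$ at the scale $L=D\log g$. This is why the paper defines $\cM_g'=\cN_g^{>\e}$ via explicit geometric conditions (no separating multicurves of length $<(\kappa/2)\log g$, and none of length $<2D\log g$ with both sides large), bounds the probability of their failure by $O(g^{\kappa-1})$ (Lemma \ref{L:NgMeasure}), shows that on $\cN_g$ every non-simple geodesic of length at most $D\log g$ lies in a subsurface of \emph{bounded} area with connected complement (Corollary \ref{C:subsurface}) --- so only finitely many topological types occur, not polynomially many in $L$ --- and only then multiplies the expected number of such subsurfaces (Corollary \ref{C:SepVol}) by the deterministic count. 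Your ``Markov-type complement'' would have to be made concrete in essentially this way; a first-moment bound on the non-simple count over all of $\cM_g$ is not available at this length scale. (A minor misattribution: the uniform refinement of $V_{g-1,2}(\ell,\ell)/V_g$ to a multiplicative $1+O(g^{-1+o(1)})$ error comes from the appendices, not from Section \ref{S:Bounds}.)
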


\begin{thm}\label{T:SpectralMain}
Fix $D > 4$ and $1 > \kappa > 0.$  Let $\mu$ be a Borel probability measure on $\cM_g$ such that
\begin{enumerate}
\item $\mu$ is supported on the $e^{- g^{o(1)}}$-thick part of $\cM_g$, and
\item for any non-negative function $F$ with support in $[0, D\log(g)]$, the $\mu$-average of $F_{\mathrm{all}}$ is at most 
$$I_F + O(g^{-1+\kappa}I_{\widetilde{F}}). $$
\end{enumerate}
Then the $\mu$-probability that $\lambda_1(X) \leq \frac{1}{4} - b^2$ is at most $$O \left(  g^{ 1 - 4b \left(1 - \frac{\kappa}{2} \right) + o(1)} \right),$$
where the implicit constant in the big O notation depends only on $D$, $\kappa$, and the implicit constant in $O(g^{-1+\kappa})$.
\end{thm}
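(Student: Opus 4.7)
The plan is to apply the Selberg trace formula to convert the geodesic-side hypothesis (2) into spectral-side control, and then invoke a Markov-type inequality to extract the tail bound.

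Reserving $g$ for the genus, I will fix a Paley--Wiener pair $(h, \phi)$ with $\phi$ an even, smooth, non-negative function compactly supported in $[-D\log g, D\log g]$, and $h(r) = \int \phi(x) e^{irx}\,dx$ its Fourier transform. I arrange that $h \geq 0$ on $\mathbb{R}$ and that $h(is)$ is monotone increasing on $[0, 1/2]$; a workable family is $h = |\hat f|^2$ with $f \geq 0$ even, smooth, and compactly supported, for which both positivity properties are automatic. The Selberg trace formula, averaged against $\mu$, then reads
\[
\mathbb{E}_\mu\left[\sum_j h(r_j)\right] = (g-1)\int_{-\infty}^\infty h(r)\, r\tanh(\pi r)\,dr + \mathbb{E}_\mu\left[\sum_\gamma \frac{\ell(\gamma_0)\, \phi(\ell(\gamma))}{2\sinh(\ell(\gamma)/2)}\right],
\]
where the second sum is over oriented closed geodesics $\gamma$ on $X$ with primitive $\gamma_0$.

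Next I estimate each piece. The identity integral $I_h := \int h(r)\, r\tanh(\pi r)\, dr$ is bounded by $O((\log g)^C)$ for some $C$ depending only on $h$, so the identity term is $O(g (\log g)^C)$. The primitive contribution ($\gamma = \gamma_0$) to the geodesic sum equals $F_{\mathrm{all}}(X)/2$ in the notation of Theorem~\ref{T:GeometricMain}, where $F(\ell) := \ell \phi(\ell)/(2\sinh(\ell/2))$; since $F \geq 0$ and is supported in $[0, D\log g]$, hypothesis (2) yields $\mathbb{E}_\mu[F_{\mathrm{all}}]/2 \leq (1 + O(g^{-1+\kappa}))\, I_F/2$. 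The iterate ($k \geq 2$) contributions are of lower order for $\phi$ concentrated near the boundary of $[0, D\log g]$: using the thick-part hypothesis (1) to ensure that no single short primitive contributes too many iterates (each contributing $O(g^{o(1)})$ in total), together with a routine bound on the number of short primitives via hypothesis (2) applied to an indicator-like test function, their total contribution can be absorbed into the error $O(g^{-1+\kappa})\, I_F/2$ plus $g^{o(1)}$.

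Finally, on the event $\{\lambda_1(X) \leq 1/4 - b^2\}$ the eigenvalue parameter $r_1 = is_1$ lies on the imaginary axis with $s_1 \in [b, 1/2]$, so monotonicity of $h(is)$ forces $\sum_j h(r_j) \geq h(ib)$, and Markov's inequality produces
\[
\mathbb{P}_\mu\bigl(\lambda_1 \leq \tfrac{1}{4} - b^2\bigr) \leq \frac{(g-1)\, I_h + (1 + O(g^{-1+\kappa}))\, I_F/2 + g^{o(1)}}{h(ib)}.
\]
The main obstacle is choosing $(h, \phi)$ so that this ratio realises the target exponent $1 - 4b(1 - \kappa/2) + o(1)$. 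A naive non-negative bump gives a ratio of order $g^{D(1/2 - b) + o(1)}$, which exceeds the target by roughly a factor of $g$ in the exponent; the claimed bound therefore requires a more delicate test function, with $\phi$ concentrated near the endpoints of its support so that $h(ib) \sim g^{bD}$, and with the freedom $D > 4$ used to ensure $bD$ exceeds $4b(1 - \kappa/2)$. I expect the crux of the argument to be a careful interplay between $I_F$, $h(ib)$, and the $O(g^{-1+\kappa})$ relative-error guarantee from hypothesis (2), together with a partial cancellation between the Plancherel identity term $(g-1)I_h$ and the expected geodesic term $I_F/2$ that is visible on a typical surface and leaves only the fluctuation $O(g^{-1+\kappa})\, I_F$ to be divided by $h(ib)$.
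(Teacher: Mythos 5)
Your overall framework---average the trace formula against $\mu$, use a test function whose transform is non-negative on the exceptional segment, and apply Markov---is the same as the paper's. But as written the argument does not close, and the fix you gesture at points at the wrong cancellation. Your final Markov bound has numerator $(g-1)I_h + (1+O(g^{-1+\kappa}))I_F/2 + g^{o(1)}$, and you correctly observe that $I_F/h(ib)$ alone already exceeds the target. The missing step is that the eigenvalue $\lambda_0=0$ contributes $h(i/2)$ to the spectral side, and this term must be moved to the geometric side \emph{before} applying Markov to the remaining exceptional eigenvalues: one bounds $p\cdot h(ib)\le \mathbb{E}_\mu\bigl[\sum_{j\ge 1}h(r_j)\bigr]$, and the right-hand side equals the identity term minus the real-spectrum term plus $\mathbb{E}_\mu\bigl[F_{h,\mathrm{all}}\bigr]-h(i/2)$. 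The whole theorem rests on the fact (the paper's Lemma \ref{L:IFf}) that $I_{F_h}$ agrees with $h(i/2)$ up to $O(\|\phi\|_1)$, so that hypothesis (2) turns the difference $\mathbb{E}_\mu[F_{h,\mathrm{all}}]-h(i/2)$ into $O(\|\phi\|_1)+O(g^{\kappa-1})\,h(i/2)$. You instead attribute the cancellation to the Plancherel identity term $(g-1)I_h$; that cannot work, since $(g-1)I_h=O(g\,(\log g)^{C})$ while the geodesic main term is of size $h(i/2)\sim e^{L/2}$, which at the relevant support length is $\sim g^{2-\kappa}$ --- these are not of comparable size and do not cancel.

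Two further points. First, the optimization is not carried out: after the correct cancellation one gets $p=O\bigl(e^{(1/2-b)L}g^{\kappa-1}+g^{1+o(1)}e^{-bL}\bigr)$ for a test function of support length $L\le D\log g$, and one must choose $L=(4-2\kappa+o(1))\log g$ to balance the two terms; this is where $D>4$ enters (so that the optimal $L$ fits inside the allowed support), not via an exponent $bD$. Your heuristic of concentrating $\phi$ at the endpoint of a support of full length $D\log g$ would overshoot. Second, your treatment of the $k\ge 2$ iterates by hand, surface by surface, is avoidable: since $F_h(x)=x\sum_{k\ge1}\phi(kx)/(2\sinh(kx/2))$ is itself a non-negative function supported in $[0,D\log g]$, hypothesis (2) applies to it directly with the iterates already packaged in, and the convexity bound $\sinh(k\ell/2)\ge \tfrac{k}{2}\sinh(\ell)$ shows their contribution to $I_{F_h}$ is $O(\|\phi\|_1)$. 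On the plus side, your choice $h=|\widehat{f}|^2\ge 0$ on $\mathbb{R}$ lets you discard the real-spectrum term by positivity, which is genuinely simpler than the paper's route (its $\widehat{f_L}(r)=\widehat{f}(r)\cos(Lr)$ oscillates, forcing a local Weyl law to control that term); if you rebuild the argument around the correct cancellation, that simplification is worth keeping.
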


\begin{proof}[Proof of Theorem \ref{T:main} given Theorems \ref{T:GeometricMain} and \ref{T:SpectralMain}]
Let $\mu_g$ be the restriction of the Weil-Petersson measure to $\cM_g'$, normalized to be a probability measure.  Theorem \ref{T:SpectralMain} proves that the $\mu_g$ probability that $\lambda_1<\frac3{16}-\e$ goes to zero as $g\to \infty$. Since the complement of  $\cM_g'$ has probability measure going to zero as $g\to \infty$, this gives the result. 
\end{proof}

\bold{Additional context.} Mirzakhani pioneered the study of Weil-Petersson random surfaces \cite{Mirzakhani:Growth}, and devoted her ICM address to this topic \cite{Mirzakhani:ICM}. Building on her previous study of Weil-Petersson volume polynomials, she proved in particular that the probability that the Cheeger constant is smaller than $0.099$
goes to zero as the genus goes to infinity.  

More recent works motivated by the problem of understanding $\lambda_1$ of typical high genus surfaces include results on Weil-Petersson volume polynomials \cite{AM} and the geometry of typical surfaces \cite{MT}.

For some additional open problems related to random surfaces, see \cite[Section 10.4]{Tour}. In analogy with regular graphs \cite{Alon}, we expect that Riemann surfaces of high genus have Cheeger constant bounded away from 1, and that Theorem \ref{T:main} cannot be obtained using the Cheeger inequality.\footnote{ Added in proof: This expectation was proven correct in \cite{NewCheeger}.}  See \cite{NWX, WuXueParlier} for a recent study of separating geodesics. 

Additional results on small eigenvalues can be found in \cite{WuXue2, Dub}.

 \bold{Key tools.}  In Sections \ref{S:thin} and \ref{S:GeometricMain}, we  use  the formula Mirzakhani gave in her thesis for integrating certain functions over $\cM_g$ \cite{Mirzakhani:Invent}.  We briefly recall this formula now, but see also  the second author's expository introduction to Mirzakhani's work for an introduction offering more explanation and intuition   \cite[Section 4]{Tour}. 

 Let $\cM_{g,n}$ denote the moduli space of genus $g$ hyperbolic surfaces with $n$ cusps.  
Let $V_{g,n}$ denote the volume of $\cM_{g,n}$, and $V_{g,n}(L_1, \ldots, L_n)$ denote the volume of the moduli space of genus $g$ hyperbolic surfaces with boundary geodesics of lengths $L_1, \ldots, L_n$. Given a compactly supported real function $F$, define $F_{\mathrm{sns}}:\cM_g \to \bR$ by letting $F_{\mathrm{sns}}(X)$ be the sum of $F$ over the lengths of all oriented simple non-separating geodesics on $X$. A special case of Mirzakhani's integration formula is 
$$\int_{\cM_g} F_{\mathrm{sns}} = \int_0^\infty \ell F(\ell) V_{g-1,2}(\ell, \ell) d\ell.$$

  More generally, if $\gamma=(\gamma_1, \ldots, \gamma_k)$ is a tuple of disjoint simple curves, and $F$ is  a  function of $k$ real variables,  we define $F_\gamma: \cM_{g}\to \bR$ by  
$$F_\gamma(X) = \sum_{\alpha} F(\ell_{\alpha_1}(X), \ldots, \ell_{\alpha_k}(X)),$$
where the sum is taken over the mapping class group orbit of $\gamma$.
%
%
Mirzakhani's Integration Formula gives that
$$\int_{\cM_{g}} F_\gamma 
=\iota_\gamma \int_{\ell=(\ell_1, \ldots, \ell_k)\in \bR^k_+} \ell_1 \cdots \ell_k F(\ell_1, \ldots , \ell_k) \prod_{j=1}^s V_{g_j, n_j}(\Lambda_j) \wrt\ell,
$$
where $\iota_\gamma\in\bQ_+$ is an explicit constant, $s$ is the number of connected components of the complement of $\gamma$, and $V_{g_j, n_j}(\Lambda_j)$ are the volumes of the moduli spaces which naturally contain those components.

 We also use the  asymptotics of $V_{g,n}$  established  in \cite[Theorem 1.8]{MirzakhaniZograf:LargeGenus}.  That result gives a constant $C>0$ such that for any sequence $n(g)$ with $$\lim_{g\to\infty} \frac{n(g)^2}{g}=0,$$ the estimate
\begin{equation}
\label{E:asym}
V_{g,n(g)} = \frac{C}{\sqrt{g}} (2g-3+n(g))! (4 \pi^2)^{2g-3+n(g)}\left( 1+ O\left( \frac{1+n(g)^2}{g} \right)  \right)
\end{equation}
holds as $g\to \infty$. We also use that, as a consequence of the asymptotics and \cite[inequality (3.8)]{Mirzakhani:Growth}, there is a constant $C_0>0$ such that, for \emph{all} $g$ and $n$ we have the upper bound  
\begin{equation}
\label{E:upper}
V_{g,n} \leq \frac{C_0}{\max(1,\sqrt{g})} (2g-3+n)! (4 \pi^2)^{2g-3+n}.    
\end{equation}

\bold{Organization.}  
In Section \ref{S:Bounds} we give bounds on the number of closed geodesics on surfaces with boundary, showing that often there are vastly fewer geodesics than one  expects on closed surfaces. (In particular, the critical exponent is often close to $0$.)

These bounds however require a lower bound on the systole. For this reason, we require estimates for a version of Mirzakhani's Integration Formula over the thin part of $\cM_g$, which we obtain in Section \ref{S:thin} via an inclusion-exclusion argument. This section also gives more precise estimates for the volume of the thin part than were previously known. 

Sections \ref{S:GeometricMain} and \ref{S:SpectralMain} prove Theorems \ref{T:GeometricMain} and \ref{T:SpectralMain} respectively. 

Our arguments in Sections \ref{S:thin} and \ref{S:GeometricMain} crucially rely on estimates of Mirzakhani \cite{Mirzakhani:Growth}, Mirzakhani-Zograf \cite{MirzakhaniZograf:LargeGenus}, and Mirzakhani-Petri \cite{MirzakhaniPetri:Lengths}. For the convenience of the reader, we revisit the proofs of these estimates in  Appendices \ref{A:VolPoly} and \ref{A:Vols} to verify some results on  uniformity  that were not explicitly included in the original statements. Similarly, in Appendix \ref{localweyllawappendix} we review a standard local Weyl law argument used in Section \ref{S:SpectralMain}. 

\bold{Acknowledgements.}  We thank Farrell Brumley, Andrew Granville, Rafe Mazzeo, Peter Sarnak, and Scott Wolpert for helpful conversations. We also especially thank Paul Apisa  and the referees  for detailed and helpful comments. 

During the preparation of this paper, the first author was partially supported by a NSERC Discovery Grant, and the second author was partially supported by a Clay Research Fellowship,  NSF Grant DMS 1856155, and a Sloan Research Fellowship. 

\section{Surfaces with few geodesics}\label{S:Bounds}

Throughout this paper, all hyperbolic surfaces and subsurfaces are assumed to be compact, and are either closed or have geodesic boundary.  The purpose of this section is to show the following theorem. 

\begin{thm}\label{T:GraphBound}
For any $A>0$, there exists  a $C>0$ such that if $X$ is a hyperbolic surface of area $A$  and $L_0>1$ and $\frac12>\e>0$ are such that
\begin{enumerate}
\item $X$ does not have any pants or one-holed tori of total boundary length less than $L_0$, and
\item  $X$ has systole at least $\e$,
\end{enumerate}
 then for all $\ell>0$, the number of closed geodesics on $X$ of length at most $\ell$ is at most 
 $$\left(\frac{C L_0 \log(1/\e)}{\e}\right)^{C \ell/L_0+3}.$$
\end{thm}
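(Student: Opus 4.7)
The plan is to bound the number of closed geodesics on $X$ by counting closed walks in a finite graph $\Gamma$ encoding the geometry of $X$ at scale $L_0$. The principal difficulty lies in a small-degree bound on $\Gamma$ in which both hypotheses play essential roles.

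First, I would fix a maximal $\e/2$-separated net $V\subset X$. The systole bound makes the $\e/4$-balls about distinct net points disjoint and embedded, so $|V|\le CA/\e^2$. Take $\Gamma$ to have vertex set $V$, with one edge for each homotopy class (rel endpoints) of geodesic arc of length at most $L_0$ joining vertices. Every closed geodesic of length $\ell$ is freely homotopic to a closed walk in $\Gamma$ of combinatorial length at most $C\ell/L_0+3$, obtained by snapping $\lceil\ell/L_0\rceil$ waypoints on the geodesic to nearby net points. Since the number of closed walks of length at most $k$ in a graph with $N$ vertices and maximum degree $d$ is at most $Nd^k$, the target bound will follow once we establish $d\le CL_0\log(1/\e)/\e$, absorbing $|V|$ into the $+3$ in the exponent via the crude estimate $|V|\le(CL_0\log(1/\e)/\e)^3$ for $C$ sufficiently large depending on $A$.

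The degree bound is the main obstacle. Lifting to the universal cover $\widetilde{X}$, the degree of a vertex equals the number of $\pi_1(X)$-translates of net points lying in a ball of hyperbolic radius $L_0$ about a chosen lift. A bare volume estimate gives only $e^{O(L_0)}$, exponentially too weak. The improvement must come from condition (1): in its absence, loops of length at most $L_0$ could proliferate by spiralling inside a small pants or winding inside a small one-holed torus, generating exponentially many homotopy classes within the length budget. Condition (1) rules out precisely those configurations, forcing the local growth of $\pi_1(X)$ at scale $L_0$ to be only polynomial in $L_0/\e$. The irreducible factor $\log(1/\e)/\e$ is the contribution of twist classes through a single Margulis collar of core length $\sim\e$ and width $\sim\log(1/\e)$.

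To make this small-growth estimate quantitative, my strategy is to associate to any family of distinct homotopy classes of arcs of length at most $L_0$ based at a common net point the filling subsurface $S$ they trace out, and argue that a family larger than the claimed count forces $S$ to contain a pants or one-holed torus whose total boundary length is bounded by a universal multiple of $L_0$, contradicting condition (1). The length budget of each arc can be split between ``traversal'' (contributing polynomially many classes via the volume of a hyperbolic ball) and ``twisting'' (a factor of $\log(1/\e)/\e$ per short-geodesic collar crossed), with condition (1) limiting the number of collars crossable within length $L_0$. Once the degree bound is in hand, combining it with the vertex count and walk-length bound yields the theorem by routine bookkeeping.
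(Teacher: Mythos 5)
Your framework (a net on $X$, a graph whose edges are homotopy classes of short arcs between net points, and a count of closed walks) is the same as the paper's, and your bookkeeping for absorbing the vertex count into the exponent is fine. But the step you yourself single out as ``the main obstacle'' --- the degree bound, i.e.\ that the number of homotopy classes of geodesic arcs of length at most $\sim L_0$ between two fixed points is $O(L_0/\e)$ rather than $e^{O(L_0)}$ --- is never actually proved. You offer only a strategy (``a family larger than the claimed count forces a short-boundary pants or one-holed torus''), with no indication of how to extract such a subsurface from a large family of arcs, and that extraction is precisely the content of the theorem. This is a genuine gap: everything else in the argument is routine, and the theorem is equivalent to this one estimate.

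For comparison, the paper proves it as follows (Lemma \ref{L:BallGood} and Proposition \ref{P:SegmentCount}): hypothesis (1) forces the ball of radius $R=L_0/12$ about any point to be isometric to a subset of $\bH$ or of a cylinder $\bH/\langle\gamma\rangle$. The proof grows a ball about the center until it fails to be embedded, then until it fails to be an annulus, producing two simple loops $\alpha_1,\alpha_2$ of length at most $2R$ intersecting at most once; a regular neighborhood of their union yields a one-holed torus or a pants of total boundary at most $12R=L_0$, contradicting (1). In the cylinder case the count of arcs between two points of length at most $L_0/12$ is elementary: at most $2+2(L_0/12)/T$ with translation length $T\geq\e$. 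Note two calibration issues in your sketch that this reveals: your arcs must have length about $L_0/12$, not $L_0$ (so waypoints spaced $O(L_0)$ apart are too coarse for hypothesis (1) to apply to the relevant ball --- harmless, since it only changes the constant in the exponent); and the factor $\log(1/\e)$ does not come from twisting in collars (the per-pair arc count is $3+L_0/(6\e)$, with no log) but from the size of the net needed to cover the collars. I would encourage you to try to prove your degree bound directly; you will likely be led to something equivalent to the ball-embedding lemma above.
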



\begin{rem}
When getting upper bounds of the form $O(e^{\delta \ell})$ on the number of closed geodesics on a general hyperbolic surface, it isn't possible to do  better than $\delta=1$. But, for fixed $A$ and $\e$, Theorem \ref{T:GraphBound} gives such a bound with $\delta$ a multiple of $\log(L_0)/L_0$. 
\end{rem}

It is well known that  for all $g$ there exists an $L_0$ such that if $X$ is a closed surface of genus $g$ then $X$ has a pants of total boundary length at most $L_0$ (see for example \cite[Chapter 5]{Buser}). Thus, since Theorem \ref{T:GraphBound} only gives useful information when $L_0$ is large enough to outweigh the influence of the unknown constant $C$, Theorem \ref{T:GraphBound} does not give any information at all on closed surfaces. It is also easy to see that for all $g$ and $n$ and $B>0$ there exists an $L_0$ such that if $X$ is a genus $g$ surface with $n$ boundary components whose total boundary length is at most $B$, then $X$ has a  pants of total boundary length at most $L_0$. Again since $C$ is unknown, this means Theorem \ref{T:GraphBound} does not give any information for any fixed values of $g,n$ and $B$. To the contrary, the reader should have in mind the situation when $g$ and $n$ are fixed but $B$ goes to infinity, and   we think of Theorem \ref{T:GraphBound} as saying  that many surfaces with long boundary have very few geodesics.

Surfaces satisfying the first condition in Theorem \ref{T:GraphBound} are studied in \cite{MT}, where they are called $L_0/2$-tangle-free.

\subsection{The thin part}
If $X$ is a hyperbolic surface with boundary, then we may form the double $X_d$ of $X$ by gluing together two copies of $X$ along their boundary to obtain a closed surface. The double is equipped with an involution whose quotient is $X$. For any $\delta$, the $\delta$-thin part of $X_d$ is defined to be the subset where the injectivity radius is less than $\delta$, and the $\delta$-thin part of $X$ is the image of this set in $X$.

For $\delta$ small enough, the $\delta$-thin part of $X_d$ is a disjoint union of collars around short geodesics.  (See, for example, \cite[Section 2.2]{Tour} for a brief introduction to this fact with an illustration, and \cite[Chapter 4]{Buser} for more details.)   Each such collar is either fixed by the involution or exchanged with another collar. In the fixed case,  each of the two boundary circles is mapped to itself (since otherwise cone points would appear in the quotient),  and we call the quotient of the collar a thin rectangle.  

 We thus see that the $\delta$-thin part  of $X$ is a union of collars and thin rectangles. Thin rectangles correspond to regions of $X$ where two segments of the boundary of $X$ are very close to each other. We define the width of a rectangle to be the minimal distance between the components of its boundary in the interior of $X$, so very thin rectangles have very large width. 

From now on, we fix $\delta$ and refer to the $\delta$-thin part simply as the thin part. Its complement is called the thick part. Recall  the following standard fact. 

\begin{lem}
For all $A>0$ there exists a constant $C$ such that if $X$ is a hyperbolic surface of area at most $A$ with boundary, then there is a set of 
at most $C$ points on $X$ such that every point in the thick part is within distance $1$ of one of these points. 
\end{lem}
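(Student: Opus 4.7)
The plan is a standard volume-packing argument. First I would take a maximal $1$-separated subset $S$ of the thick part of $X$, i.e., a maximal set of points pairwise at $X$-distance greater than $1$. By maximality, every point of the thick part lies within $X$-distance $1$ of some point of $S$, so it suffices to bound $|S|$ by a constant depending only on $A$ (and the fixed $\delta$).

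To control $|S|$, I would pass to the double $X_d$ and exploit a disjointness-of-balls argument. The key preliminary observation is that the $X$-distance and $X_d$-distance agree on pairs of points in $X$: any path in $X_d$ joining two points of $X$ that crosses the geodesic $\partial X$ can be reflected across $\partial X$ (piece by piece, using the isometric involution) to produce a path of the same length which stays in $X$. Hence $d_X = d_{X_d}$ on $X \times X$, so the set $S$ is also $1$-separated as a subset of the closed surface $X_d$.

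Next, set $r = \min(1/2, \delta)$. For each $p \in S$, the point $p$ lies in the thick part of $X_d$ by definition, so the injectivity radius of $X_d$ at $p$ is at least $\delta \geq r$. Therefore the metric ball $B_{X_d}(p, r)$ is embedded in $X_d$ with area equal to $2\pi(\cosh(r) - 1)$. Moreover, since the points of $S$ are pairwise at $X_d$-distance greater than $1 \geq 2r$, the balls $\{B_{X_d}(p, r)\}_{p \in S}$ are pairwise disjoint. Comparing total area gives
\[
|S| \cdot 2\pi(\cosh(r) - 1) \;\leq\; \Vol(X_d) \;=\; 2 \Vol(X) \;\leq\; 2A,
\]
so $|S| \leq A / (\pi(\cosh(r) - 1))$, which is the required constant $C$ depending only on $A$ (since $\delta$ is fixed).

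I do not expect any serious obstacle. The only step that requires a small argument rather than a one-liner is the equality $d_X = d_{X_d}$ on $X$, which relies on the fact that $\partial X$ is a geodesic in $X_d$ so that reflections across it are isometries preserving path length. Everything else is a standard separated-net/volume-comparison argument.
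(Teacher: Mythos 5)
Your proof is correct, and since the paper states this lemma without proof (as a "standard fact"), your volume-packing argument via a maximal $1$-separated net is exactly the standard argument being invoked. All the key points check out: $d_X=d_{X_d}$ on $X\times X$ because the folding map $X_d\to X$ is $1$-Lipschitz, a point in the thick part of $X$ has all its preimages outside the thin part of $X_d$ so the injectivity radius bound applies, and the disjoint embedded balls of radius $r=\min(1/2,\delta)$ give $|S|\leq A/\bigl(\pi(\cosh(r)-1)\bigr)$, a constant depending only on $A$ and the fixed $\delta$.
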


\begin{proof}[Proof sketch]
Any maximal 1-separated subset of the thick part will work. Since the $\min(\delta, \frac12)$-balls around these points are embedded (in the double) and disjoint, an area estimate gives an upper bound for the number of points in the subset.
\end{proof}

From this we immediately get the following, keeping in mind that a  geodesic  of length $\e$ has a collar of  diameter comparable to  $\log(1/\e)$  and that the number of short geodesics is (linearly) bounded by $A$. 

\begin{cor}\label{C:Net}
For all $A>0$ there exists a constant $C$ such that that if $X$ is a hyperbolic surface of area at most $A$ with boundary, $L_0>1$ is arbitrary, and the systole of $X$ is at least $\e>0$, then there is a set of 
at most $$C+C \log(1/\e)/L_0$$ points on $X$ such that every point not in a thin rectangle of width at least $L_0$ is within distance $L_0/{48}$ of one of these points. 
\end{cor}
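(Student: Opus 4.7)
The plan is to construct the required net as a union of three sub-nets covering the thick part, the collars (with half-collars treated as a special case), and the thin rectangles of width less than $L_0$; these three pieces exhaust the set that must be $(L_0/48)$-netted.

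For the thick part, I would start from the $C_1 = C_1(A)$-point $1$-net given by the previous lemma, and then subdivide each of its balls. Any unit hyperbolic ball can be covered by $O((L_0/48)^{-2})$ balls of radius $L_0/48$, and since $L_0 > 1$ this factor is bounded by an absolute constant, producing an $(L_0/48)$-net of the thick part of size $O_A(1)$.

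For each collar about a geodesic of length $\ell \in [\e, L_{\max}]$, where $L_{\max} = L_{\max}(\delta)$ is a constant, I would work in Fermi coordinates $(\rho, \theta) \in [-w, w] \times \bR/\ell\bZ$ with metric $d\rho^2 + \cosh^2(\rho)d\theta^2$, where $\sinh(w) = 1/\sinh(\ell/2)$ and hence $w = O(\log(1/\e))$. The standard identity $\ell\cosh(w) = \ell\coth(\ell/2)$ shows that the circumference $\ell\cosh(\rho)$ at every height is bounded by a constant depending only on $L_{\max}$. I would then place a grid with $\rho$-spacing $L_0/c$ and $\theta$-spacing $L_0/(c\cosh(\rho))$ at height $\rho$, for a suitable constant $c$ chosen so that the triangle inequality in these coordinates makes the grid an $(L_0/48)$-net. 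This uses $O(w/L_0)$ $\rho$-levels and, thanks to the circumference bound and $L_0 > 1$, only $O(1)$ points per level, for a total of $O(\log(1/\e)/L_0)$ points per collar. Since the number of collars is $O_A(1)$ (by the positive lower bound on the area of a collar in the thin part), the combined contribution is $O_A(\log(1/\e)/L_0)$.

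Each thin rectangle of width less than $L_0$ is the quotient of a collar by a reflecting involution, with $2w < L_0$, so the same grid uses $O(1)$ points per rectangle; as the number of such rectangles is again $O_A(1)$, this contributes $O_A(1)$ total. Summing the three contributions gives the claimed bound $C + C\log(1/\e)/L_0$ for suitable $C = C(A)$. The only technical content beyond the previous lemma is the distance estimate in Fermi coordinates verifying that the grid truly is an $(L_0/48)$-net, which is a routine triangle-inequality computation that I do not expect to pose a serious obstacle.
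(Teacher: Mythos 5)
Your proposal is correct and follows essentially the same route as the paper, which simply asserts the corollary follows from the preceding lemma "keeping in mind that a curve of length $\e$ has a collar of size $O(\log(1/\e))$": refine the $1$-net on the thick part (using $L_0>1$ to bound the refinement factor), cover each of the $O_A(1)$ collars with a grid of $O(\log(1/\e)/L_0)$ points, and cover each thin rectangle of width less than $L_0$ with $O(1)$ points. Your Fermi-coordinate grid and the triangle-inequality check (move to the nearest $\rho$-level first, then along it) are exactly the details the paper leaves implicit.
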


For the remainder of this section, given a surface $X$ as in Theorem \ref{T:GraphBound}, and a choice of $L_0$, we fix a set $\mathrm{Net}(X)$ of points as in Corollary \ref{C:Net}.

Each thin rectangle has two boundary components in the interior of $X$, which we think of as the two ends of the rectangle. For each end of each rectangle of width at least $L_0$,  add a point on this end  to $\mathrm{Net}(X)$. These points will be distinguished in that we will remember that that each such point is associated to the end it lies on. Since the number of thin rectangles is bounded linearly in terms of $A,$ we have added at most a constant (linear in $A$) number of points to $\mathrm{Net}(X)$.  (The number of thin rectangles is at most equal to the maximal number of short geodesics in the double.)  

\subsection{Good segments}

Define a \emph{good segment} to be a geodesic segment joining two points in $\mathrm{Net}(X)$ that either
\begin{itemize}
\item has length at most $L_0/12$, or 
\item is contained in a thin rectangle of width at least $L_0$ and starts and ends at the chosen points on each end. 
\end{itemize}
The zero length geodesic joining a point in $\mathrm{Net}(X)$ to itself will be considered a good segment. 

The purpose of this subsection is to show the following. 

\begin{prop}\label{P:SegmentCount} 
Let X be as in Theorem \ref{T:GraphBound}. For any two points $p_1, p_2\in \mathrm{Net}(X)$, there are at most 
$$3+\frac{L_0}{6\e}$$
good segments joining $p_1$ and $p_2$. 
\end{prop}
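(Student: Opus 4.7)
The plan is to split good segments into \emph{short segments}, of length at most $L_0/12$, and \emph{rectangle segments}, contained in a thin rectangle of width at least $L_0$, and bound each class separately. Since each distinguished point of $\mathrm{Net}(X)$ lies at a unique end of a unique thin rectangle of width at least $L_0$, there is at most one rectangle good segment between any given pair $p_1, p_2 \in \mathrm{Net}(X)$.

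To count short segments, pass to the universal cover $\pi \colon \bH^2 \to X$ and fix a lift $\tilde p_1$ of $p_1$. Short good segments from $p_1$ to $p_2$ correspond bijectively to the set $S = \pi^{-1}(p_2) \cap B(\tilde p_1, L_0/12)$ via the endpoint of the lift. Fix $\tilde q_0 \in S$; each $\tilde q \in S$ determines a unique deck transformation $g_{\tilde q} \in \pi_1(X)$ with $g_{\tilde q} \tilde q_0 = \tilde q$, and the triangle inequality gives $d(\tilde q_0, g_{\tilde q} \tilde q_0) \leq L_0/6$. Set $G = \{g_{\tilde q} : \tilde q \in S\}$.

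The main obstacle is to show that $G$ lies in a cyclic subgroup $\langle g_0 \rangle$ of $\pi_1(X)$. I plan to argue by contradiction: if nontrivial $g_1, g_2 \in G$ do not share an axis, then they are hyperbolic with translation lengths at most $L_0/6 < L_0/2$, and the tangle-free hypothesis forces the corresponding closed geodesics $\gamma_1, \gamma_2 \subset X$ to be simple and pairwise disjoint---any self-intersection or transverse crossing would produce a pair of pants or one-holed torus of total boundary length less than $L_0$ (via commutator or product bounds), violating tangle-free. One then builds a pair-of-pants subsurface in $X$ from a regular neighborhood of $\gamma_1 \cup \gamma_2$ together with a short arc connecting them through $p = \pi(\tilde q_0)$, whose three boundary components are freely homotopic to $\gamma_1$, $\gamma_2$, and (a geodesic representative of) the word $g_1 g_2$; the translation lengths of these are bounded by $L_0/6$, $L_0/6$, and $d(\tilde q_0, g_1 g_2 \tilde q_0) \leq L_0/3$ respectively, giving total boundary length at most $2L_0/3 < L_0$, contradicting tangle-free. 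Since a hyperbolic isometry has a unique axis, pairwise axis-sharing extends at once to all of $G$, so $G \subset \mathrm{Stab}(\mathrm{axis}(g_0)) = \langle g_0 \rangle$ for some primitive $g_0$.

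Granted the cyclicity, let $\tau \geq \e$ be the translation length of $g_0$. Orthogonal projection onto $\mathrm{axis}(g_0)$ is $1$-Lipschitz and sends the orbit $\{g_0^k \tilde q_0 : g_0^k \in G\}$ to an arithmetic progression of spacing $\tau$ contained in the projection of $S$, an interval of length at most $\mathrm{diam}(S) \leq L_0/6$. Hence $|G| \leq L_0/(6\tau) + 1 \leq L_0/(6\e) + 1$, and adding at most one rectangle segment yields the total bound $L_0/(6\e) + 2 \leq 3 + L_0/(6\e)$.
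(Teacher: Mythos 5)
Your proposal is correct and follows the same overall skeleton as the paper: at most one ``rectangle'' segment, plus a count of segments of length at most $L_0/12$, which is carried out by showing the relevant local picture is cyclic and then counting in an annulus. Where you differ is in how cyclicity is established. The paper proves Lemma \ref{L:BallGood}: the ball $B(p_1,L_0/12)$ is isometric to a subset of $\bH$ or of $\bH/\langle\gamma\rangle$, via a growing-ball argument that produces two \emph{simple} loops of length at most $2R$ meeting at most once, from which an embedded pants or one-holed torus of total boundary at most $12R=L_0$ is built directly; the count then comes from Lemma \ref{L:AnnulusCount}. You instead work with the set $G$ of deck transformations relating lifts of $p_2$ in $B(\tilde p_1,L_0/12)$ (equivalently, with the image of $\pi_1(B(p_1,L_0/12))$ in $\pi_1(X)$, so your cyclicity claim is exactly what Lemma \ref{L:BallGood} delivers), and your projection-to-the-axis count is the universal-cover version of Lemma \ref{L:AnnulusCount}; it even gives the marginally sharper bound $2+L_0/(6\e)$. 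What the paper's route buys is that the loops it produces are simple and nearly disjoint \emph{by construction}, so the embedded pants/torus comes for free; your route starts from two arbitrary non-coaxial elements $g_1,g_2$ and must do case analysis (non-simple geodesics, crossing geodesics, degenerate boundary classes) to reach an embedded subsurface, each case being handled by a further short-pants contradiction.

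The one step you should tighten is the connecting-arc construction in the disjoint-simple case: for the third boundary component to lie in the class of $g_1g_2$ you must choose the arc in the homotopy class dictated by the based loops at $\pi(\tilde q_0)$, while for the neighborhood to be an \emph{embedded} pants the arc must be embedded and meet $\gamma_1\cup\gamma_2$ only at its endpoints; these two requirements are in tension since the prescribed arc may be long and may cross the geodesics. This is repairable (e.g.\ replace the construction by a regular neighborhood of the wedge of the two based geodesic loops at $\pi(\tilde q_0)$, of total boundary at most $2(\ell(c_1)+\ell(c_2))\leq 2L_0/3$, and tighten; or note that any bad crossing itself yields a short pants or one-holed torus), so I would call it a loose end rather than a gap, but as written it is the weakest link in your argument.
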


\begin{lem}\label{L:AnnulusCount}
Let $\gamma \in \Isom^+(\bH)$ have translation length $T>0$. Then, for any two points in $\bH/\langle \gamma \rangle$, there are at most 
$$2+\frac{2\ell}{T}$$
geodesic segments of length at most $\ell$ joining these two points. 
\end{lem}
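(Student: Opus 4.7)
The plan is to pass to the universal cover $\bH$ and reduce the question to counting integer lattice points in an interval along the axis of $\gamma$.

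First I would lift the two given points $p_1, p_2 \in \bH/\langle\gamma\rangle$ to basepoints $\tilde p_1, \tilde p_2 \in \bH$. Since $\bH$ is uniquely geodesic and the deck group is $\langle\gamma\rangle$, every geodesic segment from $p_1$ to $p_2$ in the quotient lifts uniquely (given $\tilde p_1$) to the geodesic segment in $\bH$ from $\tilde p_1$ to $\gamma^n \tilde p_2$ for a unique $n \in \bZ$. So the number of geodesic segments of length at most $\ell$ joining $p_1$ and $p_2$ in $\bH/\langle\gamma\rangle$ equals the cardinality of
\[
\{ n \in \bZ : d_{\bH}(\tilde p_1, \gamma^n \tilde p_2) \leq \ell \}.
\]

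Next I would exploit the axis $A$ of $\gamma$, on which $\gamma$ acts by translation by $T$. Let $\pi \colon \bH \to A$ denote the closest-point projection. It is well known that $\pi$ is $1$-Lipschitz and commutes with $\gamma$. Parametrizing $A$ by arclength so that $\gamma$ acts as $x \mapsto x + T$, and setting $a = \pi(\tilde p_1)$, $b = \pi(\tilde p_2)$, I would use
\[
d_{\bH}(\tilde p_1, \gamma^n \tilde p_2) \;\geq\; d_A\!\bigl(\pi(\tilde p_1), \gamma^n \pi(\tilde p_2)\bigr) \;=\; |nT + (b-a)|.
\]
Hence the set of $n$ counted above is contained in the set of integers in the closed interval $\{ n : |nT + (b-a)| \leq \ell\}$, which has length $2\ell/T$.

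Finally, the number of integers in any closed interval of length $2\ell/T$ is at most $\lfloor 2\ell/T \rfloor + 1 \leq 2\ell/T + 2$, which gives the stated bound. There is no real obstacle here; the only subtlety is the standard fact that the closest-point projection onto the axis of a hyperbolic isometry is $1$-Lipschitz and $\gamma$-equivariant, which reduces a two-dimensional distance problem to a one-dimensional lattice count.
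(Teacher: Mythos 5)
Your proof is correct and follows essentially the same route as the paper: the paper likewise uses the distance-non-increasing projection onto the (axis of the) closed geodesic to reduce to a one-dimensional count, phrased downstairs as ``at most $\ell/T$ loops in each of the two directions'' rather than upstairs as integers in an interval of length $2\ell/T$. Your universal-cover formulation makes the bijection between segments and deck transformations explicit, which is a nice touch, but the content is the same.
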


\begin{proof}
The projection from $\bH/\langle \gamma \rangle$ onto its unique closed geodesic is distance non-increasing, so it suffices to assume the two points lie on this geodesic. In each of the two possible directions along this geodesic, the segment must first go from one point to the other, and then can make at most $\ell/T$ complete loops around the geodesic. 
\end{proof} 

The following elementary observation applies both when $X$ is closed and when it has geodesic boundary and cusps.

\begin{lem}\label{L:BallGood}
Suppose that a hyperbolic surface $X$ doesn't have any pants or one-holed tori with total boundary length less than $L_0$. Then the ball $B$ of radius  $R=L_0/12$ centered at any point  $p\in X$ is isometric to either a subset of $\bH$ or to a subset of $\bH/\langle \gamma \rangle$ for some $\gamma \in \Isom^+(\bH)$.
\end{lem}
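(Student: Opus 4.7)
The plan is to analyze the ball $B$ by passing to the universal cover. Let $\tilde p \in \bH$ be a lift of $p$, and let $\tilde B$ denote the ball of radius $R = L_0/12$ around $\tilde p$, so $B$ is the image of $\tilde B$ in $X$ under the covering projection. Two points of $\tilde B$ become identified in $B$ exactly when they differ by a deck transformation $\gamma \in \pi_1(X)$ with $\gamma \tilde B \cap \tilde B \neq \emptyset$; denote by $H$ the set of such nontrivial $\gamma$.

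First I would dispose of two easy cases. If $H$ is empty, then $\tilde B \to B$ is injective, so $B$ is isometric to a subset of $\bH$. If $H$ is nonempty but all its elements pairwise commute, then because commuting hyperbolic isometries of $\bH$ share an axis and abelian subgroups of surface groups are cyclic, the group $\langle H \rangle$ is cyclic, generated by some $\gamma_0 \in \Isom^+(\bH)$, and $B$ is isometric to a subset of $\bH / \langle \gamma_0 \rangle$.

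The main case to rule out is that $H$ contains two noncommuting elements $\gamma_1, \gamma_2$. These generate a rank-$2$ free subgroup of $\pi_1(X)$. For each $i$, picking $x_i \in \tilde B$ with $\gamma_i x_i \in \tilde B$ bounds the translation length $\ell(\gamma_i) \leq d(x_i, \gamma_i x_i) < 2R$, and the triangle inequality yields $d(\tilde p, \gamma_i \tilde p) < 4R$, hence $\ell(\gamma_1 \gamma_2) < 8R$. The convex core of $\bH / \langle \gamma_1, \gamma_2 \rangle$, together with the local isometry to $X$ induced by the inclusion $\langle \gamma_1, \gamma_2 \rangle \hookrightarrow \pi_1(X)$, is a hyperbolic surface of Euler characteristic $-1$---either a pair of pants or a one-holed torus---whose boundary geodesics represent the classes $\gamma_1, \gamma_2, (\gamma_1 \gamma_2)^{-1}$ (pants) or $[\gamma_1, \gamma_2]$ (torus). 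In the pants case, the total boundary length is strictly less than $2R + 2R + 8R = 12R = L_0$, which, together with embeddedness in $X$, contradicts the hypothesis.

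The hard parts will be the torus subcase and the embeddedness of the subsurface in $X$. For embeddedness, I would choose $(\gamma_1, \gamma_2)$ to minimize an appropriate length functional (say $\ell(\gamma_1) + \ell(\gamma_2)$), so that any nontrivial self-identification of the immersed subsurface in $X$ would produce an even shorter noncommuting pair in $H$, contradicting minimality. For the torus subcase, the naive triangle-inequality bound $\ell([\gamma_1, \gamma_2]) < 16R$ exceeds $L_0$, so a sharper argument is needed; I would either invoke the Fricke/Markov trace identity $\mathrm{tr}[A,B] = \mathrm{tr}(A)^2 + \mathrm{tr}(B)^2 + \mathrm{tr}(AB)^2 - \mathrm{tr}(A)\mathrm{tr}(B)\mathrm{tr}(AB) - 2$ to bound $\ell([\gamma_1, \gamma_2])$ in terms of the other three translation lengths, or exhibit an embedded pair of pants inside the putative one-holed torus by cutting along one of its short simple closed curves. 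Either route produces an embedded pair of pants or one-holed torus of total boundary length strictly less than $L_0$, contradicting the hypothesis and completing the proof.
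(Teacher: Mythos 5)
Your reduction to the case where $H$ contains a noncommuting pair $\gamma_1,\gamma_2$ is sound and runs parallel to the paper's setup (the paper phrases the same dichotomy as: assume $B$ is not homeomorphic to a disk or an annulus). But the route you take from there, through the convex core of $\bH/\langle\gamma_1,\gamma_2\rangle$, has gaps that your sketched fixes do not close. First, the boundary geodesics of the convex core represent $\gamma_1,\gamma_2,(\gamma_1\gamma_2)^{-1}$ (pants case) or $[\gamma_1,\gamma_2]$ (torus case) only when $(\gamma_1,\gamma_2)$ is a geometric (standard) generating pair; for an arbitrary noncommuting pair in $H$ this requires a Nielsen-reduction argument in the style of Gilman--Maskit, and the length bounds you derived for $\gamma_1,\gamma_2,\gamma_1\gamma_2$ are not preserved under Nielsen moves. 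Second, your embeddedness strategy of minimizing $\ell(\gamma_1)+\ell(\gamma_2)$ over pairs in $H$ does not work as stated: a failure of embeddedness is witnessed by some $\delta\in\pi_1(X)\setminus\langle\gamma_1,\gamma_2\rangle$ carrying the lift of the convex core into itself, and such a $\delta$ need not move $\tilde B$, so it produces no competitor pair in $H$ to contradict minimality. Third, in the torus case both suggested repairs fail quantitatively: the Fricke identity bounds $\mathrm{tr}[\gamma_1,\gamma_2]$ only by roughly $\mathrm{tr}(\gamma_1\gamma_2)^2\approx e^{8R}$, i.e.\ $\ell([\gamma_1,\gamma_2])$ can be on the order of $16R>L_0$; and cutting the one-holed torus along a short simple closed curve produces a pants whose boundary still contains the torus boundary, which is exactly the length you cannot control.

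The paper sidesteps all three issues by producing \emph{simple} loops from the outset: growing the metric ball around $p$ and recording the first radius at which it fails to be embedded, and then the first at which it fails to be an annulus, yields two simple loops $\alpha_1,\alpha_2$ of length at most $2R$ intersecting at most once. If they meet once, a regular neighborhood of $\alpha_1\cup\alpha_2$ is automatically an \emph{embedded} one-holed torus whose single boundary curve has length at most $2\ell(\alpha_1)+2\ell(\alpha_2)\le 8R$ --- no commutator estimate is needed --- and if they are disjoint one gets an embedded pants of total boundary at most $12R$. If you want to keep your algebraic framework, the cleanest repair is to extract short \emph{simple} closed curves (for instance by running the growing-ball argument inside $\bH/\langle\gamma_1,\gamma_2\rangle$) rather than to argue with the convex core of an arbitrary generating pair.
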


A version of this lemma appears in \cite[Proposition B]{MT}.

\begin{proof}
It suffices to consider the case when $X$ doesn't have boundary. 

In order to find a contradiction, assume that $B$ is not homeomorphic to a ball or an annulus. 

\begin{sublem}
$B$ contains two simple loops $\alpha_1$ and $\alpha_2$, each of length at most $2R$, that intersect at most once. 
\end{sublem}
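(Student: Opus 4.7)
The plan is to prove the sublemma by a short-loop minimization argument in the universal cover. Lift $p$ to $\tilde p \in \bH$, so $B$ is the image of the metric ball $\tilde B = B(\tilde p, R)$ under the covering $\bH \to X$. Every geodesic loop at $p$ of length less than $2R$ has its farthest point from $p$ at distance less than $R$, so it lies in $B$; such loops correspond bijectively to nontrivial $g \in \Gamma = \pi_1(X)$ with $d(\tilde p, g \tilde p) < 2R$. Let $S$ denote this set of group elements. Since $B$ is not a disk, $S$ is non-empty; since $B$ is not an annulus, $S$ cannot lie in any cyclic subgroup of $\Gamma$ (otherwise the identifications made by the immersion $\tilde B \to B$ would be generated by powers of a single element, forcing $B$ to be an annulus).

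Next I would pick two short loops greedily: let $g_1 \in S$ minimize $d(\tilde p, g \tilde p)$ over $g \in S$, and let $g_2 \in S \setminus \langle g_1 \rangle$ minimize the same quantity over $S \setminus \langle g_1 \rangle$. Such a $g_2$ exists by the noncyclicity conclusion above. Let $\alpha_1$ and $\alpha_2$ be the corresponding geodesic loops at $p$, of lengths $\ell_1 \leq \ell_2 < 2R$, both contained in $B$.

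The remaining content of the sublemma is that $\alpha_1$ and $\alpha_2$ are simple and meet in at most one point. For simplicity, a self-intersection of $\alpha_i$ at a point $q \neq p$ partitions $\alpha_i$ into subarcs joining $p$ and $q$; rebasing one of these at $p$ via a subarc of $\alpha_i$ produces a loop at $p$ representing a nontrivial $g' \in \Gamma$ with $d(\tilde p, g' \tilde p) < \ell_i$, contradicting the minimality of $g_1$ (when $i=1$), or the minimality of $g_2$ in $S \setminus \langle g_1 \rangle$ after a combinatorial check that some such $g'$ can be chosen outside $\langle g_1 \rangle$ (when $i=2$). For the intersection bound, if $\alpha_1 \cap \alpha_2$ contains a point $q \neq p$, I would subdivide each $\alpha_i$ at $p$ and $q$ into two arcs and recombine them to obtain four loops at $p$; the sum of their lengths is $2(\ell_1 + \ell_2)$, and classifying each recombined loop as lying inside or outside $\langle g_1 \rangle$, then applying the minimality of $g_1$ and $g_2$, yields a system of length inequalities that contradicts $g_2 \notin \langle g_1 \rangle$.

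The main obstacle I anticipate is the rebasing bookkeeping in the simplicity and intersection steps: after cutting at a self- or mutual intersection, one must certify that the reassembled element is both strictly shorter as measured by $d(\tilde p, \cdot\, \tilde p)$ (not merely by translation length of a free homotopy class) and sits on the correct side of $\langle g_1 \rangle$. This calls for working in $\bH$ with specific lifts of $p$ and with explicit subarcs of $\alpha_1, \alpha_2$, rather than relying on free-homotopy invariants alone.
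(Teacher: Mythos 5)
Your approach --- selecting $g_1$ of minimal displacement and $g_2$ of minimal displacement outside $\langle g_1\rangle$, then running exchange arguments --- is a legitimate classical alternative to what the paper does, but as written it has a genuine gap precisely at the steps you defer. For the intersection bound, the four recombined loops at a crossing $q\neq p$ have total length exactly $2(\ell_1+\ell_2)$; in each of the two pairs $\{a_1\bar a_2,\,a_1 b_2\}$ and $\{\bar b_1\bar a_2,\,\bar b_1 b_2\}$ at least one class lies outside $\langle g_1\rangle$ (their ratio is $g_2$), so minimality gives two loops of length at least $\ell_2$ and two nontrivial loops of length at least $\ell_1$ --- which yields only the equality $2(\ell_1+\ell_2)\geq 2(\ell_1+\ell_2)$, not a contradiction. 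To close the argument you must additionally observe that each recombined loop has a genuine corner at $q$ (assuming transversality), hence is \emph{strictly} longer than the geodesic loop in its based homotopy class; only then does the sum exceed itself. The simplicity of $\alpha_2$ has the same character: if the shortcut $ac$ happens to land in $\langle g_1\rangle$, you must pass to $ab\bar a$ or $\bar c b c$, compare $|a|$ with $|c|$, and again invoke strict shortening at corners in the boundary case $|a|=|c|$. You also need to dispose of non-transverse intersections (where two of the geodesic arcs coincide and some recombined class is trivial) and to note that minimality of $g_2$ over $S\setminus\langle g_1\rangle$ extends to all of $\Gamma\setminus\langle g_1\rangle$ because elements outside $S$ have displacement at least $2R$. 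None of these points is fatal, but together they are the entire content of the sublemma, so the proposal is not yet a proof.

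For comparison, the paper avoids all of this bookkeeping by growing the metric ball around $p$: at the first radius $R_1<R$ where the ball fails to be embedded, a boundary point $q_1$ of multiplicity two yields $\alpha_1$ as the union of two radial geodesics of length $R_1$; continuing until the ball fails to be an annulus yields $\alpha_2$ the same way. Simplicity and the bound on intersections then come essentially for free, since distinct radial geodesics in the (still injective enough) ball meet only at $p$. If you want to keep your universal-cover framework, you should either carry out the exchange arguments in full with the corner inequality, or switch to the growing-ball construction, where the loops are simple by construction.
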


\begin{proof}
Slowly grow an open ball centered at $p$, starting with small radius and then increasing the radius to $R=L_0/12$. Let $R_1<R$ be the maximum radius where this ball is embedded, so the closure of the ball of radius $R_1$ contains a point $q_1$ that appears with multiplicity at least two in the boundary circle of the ball. Define $\alpha_1$ to be the simple loop that travels from $q_1$ to $p$ and then back out to the other appearance of $q_1$ on the boundary of the ball. See Figure \ref{F:alpha12} (left). 

\begin{figure}[ht!]
\includegraphics[width=\linewidth]{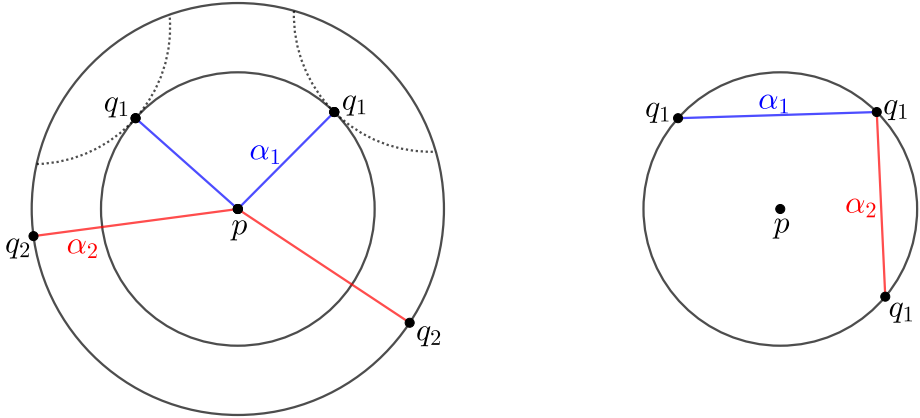}
\caption{Finding $\alpha_1$ and $\alpha_2$.}
\label{F:alpha12}
\end{figure}

If $q_1$ is not unique, then we can similarly define $\alpha_2$. If $q_1$ appears with multiplicity greater than 2, we define $\alpha_2$ as in Figure \ref{F:alpha12} (right). 

Otherwise, the ball of radius slightly larger than $R_1$ is topologically an annulus. Let $R_2<R$ be the maximum radius where this remains true.  So there is a point $q_2$ which appears at least twice  on the boundary of the ball of radius $R_2$ and isolated among such points. We now define $\alpha_2$ analogously to $\alpha_1$. 
\end{proof}

If $\alpha_1$ and $\alpha_2$ intersect once, then  either  the boundary of the neighborhood of their union  is connected and  can be tightened to a geodesic of length at most $$2\ell(\alpha_1)+2\ell(\alpha_2)\leq 8R,$$ and this geodesic bounds a torus,  or this  boundary  has three connected components which again can be tightened to geodesics  of total length at most $8R$, and these three geodesics  bound a pants.  

If they don't intersect, we can form a simple curve $\alpha_3$ by going around $\alpha_1$, then taking a minimal length path $\beta$ to $\alpha_2$, going around $\alpha_2$ once, and then going back to $\alpha_1$ along $\beta$. This $\alpha_3$ has length at most 
$$\ell(\alpha_1) + \ell(\alpha_2) + 2\ell(\beta) \leq 8R.$$
The geodesic representatives of  $\alpha_1, \alpha_2, \alpha_3$ bound a pants with total boundary at most $12R$. 
\end{proof}

\begin{proof}[Proof of Proposition \ref{P:SegmentCount}.] 
Call the two points $p_1$ and $p_2$. There is at most 1 good segment joining $p_1$ and $p_2$ of length greater than $L_0/12$; this segment can only exist if $p_1$ and $p_2$ are the points associated to opposite ends of a thin rectangle. 

So we now count the number of geodesics joining $p_1$ and $p_2$ of length at most $L_0/12$. Any such geodesic is of course contained in the ball of radius $L_0/12$ about $p_1$. 

By Lemma \ref{L:BallGood}, this ball is isometric to a subset of  $\bH$ or to a subset of $\bH/\langle \gamma \rangle.$ In the first case, there is only one geodesic from $p_1$ to $p_2$, and in the second case Lemma \ref{L:AnnulusCount} (with $T\geq\e$ and $\ell=L_1/12$) gives the necessary bound. 
\end{proof}

\subsection{Loops of good segments}
We now relate closed geodesics to good segments.

\begin{lem}\label{L:Homotope} 
Every closed geodesic $\gamma$ of length at most $\ell$ 
is homotopic to a loop of at most 
$$2+\frac{24\ell}{L_0}$$
good segments. 
\end{lem}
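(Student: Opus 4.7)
The plan is to construct a loop of good segments freely homotopic to $\gamma$ by subdividing $\gamma$ and replacing each piece by a good segment between nearby Net points. A key preliminary observation is that each thin rectangle, being the quotient of an annular collar by an involution that fixes two arcs, is simply connected as a subset of $X$. Therefore, any excursion of $\gamma$ into a wide thin rectangle that enters and exits through the same end is homotopic rel endpoints to a short arc along that end. Absorbing all such excursions yields a freely homotopic loop $\tilde\gamma$ of length at most $\ell$ whose intersections with wide thin rectangles are \emph{crossing arcs} going from one end to the other.

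I would then split $\tilde\gamma$ at the entry/exit points of each wide thin rectangle, obtaining $C$ crossing arcs interleaved with $C$ complement arcs (or a single complement arc if $C=0$). Each crossing arc is homotopic, rel a sliding of endpoints along the ends, to the designated rectangle good segment, again by simple connectivity. Each complement arc of length $L_i$ I subdivide into $\lceil 24L_i/L_0\rceil$ pieces of length at most $L_0/24$: to each interior subdivision point I assign a Net point within distance $L_0/48$ using Corollary \ref{C:Net}, and to each boundary endpoint I assign the designated end Net point of the adjacent rectangle. Consecutive Net points in the complement then lie at distance at most $L_0/12$, and the geodesic joining them is a short good segment. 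The free homotopy from $\tilde\gamma$ to the concatenation of good segments is assembled quadrilateral by quadrilateral, each quadrilateral lying in a ball to which Lemma \ref{L:BallGood} applies and hence contractible.

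The counting is then bookkeeping. Each crossing arc has length at least $L_0$ (the rectangle width), so $C\leq\ell/L_0$ and the total length of the complement arcs is at most $\ell - CL_0$. The ceiling in each subdivision costs at most one extra piece per complement arc. Adding the $C$ crossing good segments produces a total of at most $2C + 24(\ell - CL_0)/L_0 = 24\ell/L_0 - 22C$ good segments when $C\geq 1$, and at most $\lceil 24\ell/L_0\rceil \leq 24\ell/L_0 + 1$ when $C=0$; either case is bounded by $2 + 24\ell/L_0$.

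The step I expect to be most delicate is verifying that the replacement at a transition --- where a complement arc meets the boundary of a wide thin rectangle --- produces a genuine good segment rather than something longer. This reduces to showing that the ends of wide thin rectangles have diameter bounded by a constant depending only on $\delta$ and $A$ (a consequence of the standard collar lemma), and then choosing the last subdivision piece of each complement arc so as to land within $L_0/48$ of the end Net point. Once this is in place, the construction and count above go through verbatim.
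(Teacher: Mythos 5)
Your construction agrees with the paper's in its main lines: net points spaced roughly $L_0/24$ apart along $\gamma$ outside the wide thin rectangles, the designated end points and distinguished long segments for rectangle crossings, and a homotopy assembled inside balls controlled by Lemma \ref{L:BallGood}. The genuine gap is in your absorption step. You assert that replacing each same-end excursion by an arc along the end yields a loop $\tilde\gamma$ of length at most $\ell$, but the inequality goes the wrong way: the excursion is a geodesic arc inside a simply connected region, so its lift to $\bH$ is a genuine geodesic segment between the lifted endpoints and is therefore \emph{no longer} than the lift of the replacement arc along the end, which joins the same two lifted points. Each absorption can thus only increase length. Since your whole count rests on $\mathrm{length}(\tilde\gamma)\le\ell$ --- it is what gives $C\le \ell/L_0$ and $\sum_i L_i\le \ell-CL_0$ --- the bookkeeping does not close as written. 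Repairing it requires bounding the total added length, hence both the increase per absorption (at most the diameter of an end, which is $O_\delta(1)$ and harmless) and the \emph{number} of same-end excursions, which you never control: nothing in your argument rules out many shallow dips of $\gamma$ into a wide rectangle, and their number is not a priori $O(\ell/L_0)$.

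Two ways to fix this. One is not to modify $\gamma$ at all: a same-end excursion is confined to a fundamental half of the collar upstairs with both endpoints on an end of bounded length, so it stays within bounded distance of that end and hence of its designated net point; you can then fold such excursions into the adjacent complement arcs when subdividing. The other is the paper's counting device, which also avoids modifying $\gamma$: march along $\gamma$ itself, and observe that for consecutive step points either one step advances exactly $L_0/24$ along $\gamma$ or two consecutive steps together advance at least $L_0$ (a genuine crossing), whence $(n-2)L_0/24\le\ell$. Your remaining ingredients --- simple connectivity of the thin rectangles, the quadrilateral-by-quadrilateral homotopy, and the final arithmetic giving $2+24\ell/L_0$ --- are fine once the length of the modified loop (or the step count along the unmodified $\gamma$) is actually controlled.
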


\begin{proof}
There is some point $p_1 \in \mathrm{Net}(X)$ of distance at most $L_0/48$ from a point $p_1'$ of $\gamma$. Pick an orientation along $\gamma$. Having defined $p_i\in \mathrm{Net}(X)$ and $p_i'\in \gamma$, define $p_{i+1}'$ and $p_{i+1}$ as follows:
\begin{enumerate}
\item If  $p_1'$ is within distance $L_0/24$ of $p_i'$ in the forward direction along $\gamma$ starting at $p_i'$,  set $p_{i+1}=p_{1}$ and $p_{i+1}'=p_{1}'$, and conclude the construction. 
\item If the point distance $L_0/24$ along $\gamma$ in the forward direction starting at $p_i'$  is not in a thin rectangle of width at least $L_0$, then define $p_{i+1}'$ to be this point. Pick $p_{i+1}$ to be any point of $\mathrm{Net}(X)$ within distance $L_0/48$ of $p_{i+1}'$. 
\item In the remaining case, if $p_i$ is not the point of $\mathrm{Net}(X)$ associated to the entry end of this  thin  rectangle, set $p_{i+1}\in \mathrm{Net}(X)$ to be this  point on the entry end; and otherwise let $p_{i+1}\in \mathrm{Net}(X)$ be the point associated to the exit end of the rectangle. In either case, let $p_{i+1}'$ be a point in $\gamma$ of distance at most 1 away from $p_{i+1}$.  
\end{enumerate}

For each $i$, fix a path from $p_i$ to $p_i'$ of minimal length; this length must be at most $L_0/48$ in all cases. Define $\gamma_i$ be the geodesic representative of the path which goes from $p_i$ to $p_i'$, then goes along $\gamma$ to  $p_{i+1}'$, then goes to $p_{i+1}$. So, by definition $\gamma$ is homotopic to the concatenation of the $\gamma_i$. 

\begin{sublem}
Each $\gamma_i$ is a good segment. 
\end{sublem}

\begin{proof}
Since $1/48+1/24+1/48 = 1/12$, $\gamma_i$ can only have length greater than $L_0/12$ in the final case above, when it crosses a thin rectangle, and in this case $\gamma_i$ is good by definition. 
\end{proof}

It now suffices to bound the number of segments $\gamma_i$, or equivalent the number of points $p_i' \in \gamma$. Suppose the number of such points is $n$. The fact that the distance from $p_{n}'$ to $p_1'$ may be arbitrarily small slightly complicates the bound, since it means the last segment of $\gamma$ is unusual. 

If $i\leq n-2$, then either the distance along $\gamma$ from $p_i'$ to $p_{i+1}'$  is exactly $L_0/24$, or the distance from $p_i'$ to $p_{i+2}'$ is at at least $L_0.$ In this way we see that the average length of either the first $n-2$ or the first $n-1$ of these distances is at least $L_0/24$, and hence we get $(n-2)  L_0/24 \leq \ell$. 
\end{proof}

\begin{proof}[Proof of Theorem \ref{T:GraphBound}]
By Lemma \ref{L:Homotope}, it suffices to bound the number of loops consisting of at most $n=\lfloor 2+\frac{24\ell}{L_0}\rfloor$ good segments. We will actually bound the number of loops with a choice of basepoint in $\mathrm{Net}(X)$, which is larger. Since we allow zero length good segments, we can assume there are exactly $n$ good segments in the loop. 

The number of paths in $\mathrm{Net}(X)$ that can be traced out by such a loop is bounded by $|\mathrm{Net}(X)|^n$. Hence, Proposition \ref{P:SegmentCount} gives that the total number of such loops is at most 
$$|\mathrm{Net}(X)|^n \left(3+\frac{L_0}{6\e}\right)^n.$$
It now suffices to note that, since $\e<\frac12$  and $L_0>1$, 
$$|\mathrm{Net}(X)| \left(3+\frac{L_0}{6\e}\right)    = \left(C+\frac{C \log(1/\e)}{L_0}\right) \left(3+\frac{L_0}{6\e}\right)$$
can be bounded by 
$$\frac{C' L_0 \log(1/\e)}{\e}$$
for some different constant $C'$. 
\end{proof}

\section{Integrating over the thin part}\label{S:thin}


Let $\cM_g^{<\e}$ denote the subset of $\cM_g$ where the surface has a closed geodesic of length less than $\e$. 
The purpose of this section is to prove the following result. 

\begin{thm}\label{T:ThinExpectedValue}
There is a constant $\e_0>0$ such that
for $\e<\e_0$ and $F$ a non-negative function with support in $[0, g^{o(1)}]$, the average of $F_{\mathrm{sns}}$  over  $\cM_g^{<\e}$  is at least  
$$(1-g^{-1+o(1)})  I_F.$$
\end{thm}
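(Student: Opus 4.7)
The plan is to compute the numerator $\int_{\cM_g^{<\e}} F_{\mathrm{sns}}$ and the denominator $\Vol(\cM_g^{<\e})$ separately via Mirzakhani's integration formula, then take a ratio. The crucial point is that the integration formula can be applied \emph{twice}: first to unfold a short simple non-separating curve, then to unfold the curves contributing to $F_{\mathrm{sns}}$ on the cut surface.

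First I would estimate $\Vol(\cM_g^{<\e})$ by inclusion-exclusion on short geodesics. Let $A \subset \cM_g^{<\e}$ denote the set of surfaces possessing at least one simple non-separating geodesic of length less than $\e$, and let $S_{\mathrm{sns},\e}(X)$ count short simple non-separating geodesics on $X$. Applying Mirzakhani's integration formula to $S_{\mathrm{sns},\e}$ and invoking Mirzakhani--Zograf volume ratios to dispatch surfaces in $\cM_g^{<\e} \setminus A$ (whose only short curves are separating or non-simple) and surfaces with two or more short simple non-separating curves, one obtains
$$\Vol(\cM_g^{<\e}) = (1 + O(g^{-1+o(1)})) \int_0^\e \ell V_{g-1,2}(\ell,\ell) \, d\ell.$$

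Next I would lower bound $\int_{\cM_g^{<\e}} F_{\mathrm{sns}}$. Since $F \geq 0$, $\int_{\cM_g^{<\e}} F_{\mathrm{sns}} \geq \int_A F_{\mathrm{sns}}$. A Bonferroni-style bound $\mathbf{1}_A \geq S_{\mathrm{sns},\e} - \binom{S_{\mathrm{sns},\e}}{2}$, combined with Mirzakhani's integration formula, yields
$$\int_A F_{\mathrm{sns}} \geq (1 - O(g^{-1+o(1)})) \int_0^\e d\ell \int_{\cM_{g-1,2}(\ell,\ell)} \int_0^\ell F_{\mathrm{sns}}(X(Y,\ell,\tau)) \, d\tau \, dY,$$
where $X(Y,\ell,\tau)$ is the surface obtained by gluing $Y$ along its two boundary components with twist $\tau$. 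Any simple non-separating geodesic on $Y$ is disjoint from $\partial Y$ and lifts to a simple non-separating geodesic on $X(Y,\ell,\tau)$, so $F_{\mathrm{sns}}(X(Y,\ell,\tau)) \geq F^Y_{\mathrm{sns}}(Y)$, where $F^Y_{\mathrm{sns}}(Y)$ is the sum of $F$ over lengths of simple non-separating geodesics on $Y$. A second application of the integration formula gives
$$\int_{\cM_{g-1,2}(\ell,\ell)} F^Y_{\mathrm{sns}}(Y) \, dY = \int_0^\infty \ell' F(\ell') V_{g-2,4}(\ell,\ell,\ell',\ell') \, d\ell'.$$
The Mirzakhani--Zograf asymptotic $V_{g-2,4}(\ell,\ell,\ell',\ell')/V_{g-1,2}(\ell,\ell) = (1+O(g^{-1+o(1)})) \sinh(\ell'/2)^2/(\ell'/2)^2$, uniform for $\ell < \e$ and $\ell' \leq g^{o(1)}$, then yields $\int_{\cM_{g-1,2}(\ell,\ell)} F^Y_{\mathrm{sns}}(Y) dY \geq (1 - g^{-1+o(1)}) I_F \cdot V_{g-1,2}(\ell,\ell)$, and combining with the denominator estimate proves the theorem.

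The main obstacle is ensuring uniform quantitative error terms of size $O(g^{-1+o(1)})$ in the Mirzakhani--Zograf volume ratios over the relevant ranges of $\ell$ and $\ell'$, which is precisely the purpose of the paper's appendices on volume polynomials. A secondary difficulty is carefully verifying that the inclusion-exclusion corrections---contributions from short separating curves, short non-simple curves, and multiple short simple non-separating curves---are indeed of the required lower order relative to the main term coming from a single short simple non-separating curve.
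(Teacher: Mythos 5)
Your high-level strategy --- estimating $\int_{\cM_g^{<\e}}F_{\mathrm{sns}}$ and $\Vol(\cM_g^{<\e})$ separately by combining inclusion--exclusion over short geodesics with Mirzakhani's integration formula and the uniform Mirzakhani--Zograf asymptotics --- is the same as the paper's, and your two-stage unfolding of the main term is a legitimate way to compute it. The gap is in the inclusion--exclusion. You truncate at the second Bonferroni term and assert that the corrections --- the volume of the locus with two or more short simple non-separating curves, and $\int_{\cM_g}\binom{S_{\mathrm{sns},\e}}{2}F_{\mathrm{sns}}$ --- are of relative size $O(g^{-1+o(1)})$. They are not. By the integration formula, the expected number of unordered pairs of disjoint short simple non-separating geodesics is $\sim\tfrac12 I_\e^2V_g$ while the first moment is $\sim I_\e V_g$, so the correction is $\Theta(I_\e)=\Theta(\e^2)$ \emph{relative to the main term}, a quantity independent of $g$. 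The theorem permits $\e$ to be a fixed constant below $\e_0$ (and the application takes $\e=1/\log g$, for which $\e^2\gg g^{-1+o(1)}$), so your argument only yields the lower bound $(1-\Theta(\e^2))I_F$, which is strictly weaker than the statement. Relatedly, your displayed formula for $\Vol(\cM_g^{<\e})$ is wrong as an equality: the correct value is $(1+O(g^{-1+o(1)}))(1-\exp(-I_\e))V_g$, and the Poisson factor $1-\exp(-I_\e)$ differs from the first moment $I_\e$ by a relative $\Theta(\e^2)$ (your $\int_0^\e\ell V_{g-1,2}(\ell,\ell)\,d\ell$ moreover counts \emph{oriented} curves, an extra factor of $2$).

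The repair is not a more careful verification of your error terms but a deeper truncation. The paper runs the inclusion--exclusion to depth $n(g)\approx\log g/\log\log\log g$, splits the $k$-th term into a ``good'' part (the $k$ short curves, plus the unfolded curve in the numerator case, have connected complement), which produces the partial sums of $\sum_k(-1)^{k+1}I_\e^k/k!\to 1-\exp(-I_\e)$ with Taylor error $O\left(I_\e/(n(g)+1)!\right)=O(g^{-1}I_\e)$, and a ``bad'' separating part controlled by the boundary-strata volume and counting bounds. Because the \emph{same} factor $1-\exp(-I_\e)$ then appears, up to $1+O(g^{-1+o(1)})$, in both numerator and denominator, it cancels in the ratio. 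With only two Bonferroni terms no such cancellation is possible: your upper bound for the denominator is accurate only to order $I_\e$ while your lower bound for the numerator is accurate to order $I_\e^2$, leaving an unmatched $\Theta(I_\e)$ multiplicative discrepancy in the quotient.
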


More formally, this means that for any function $s(g) \in o(1)$, there exists a function $p(g) \in o(1)$ such that if the support is in $[0,g^{s(g)}]$ then the average is at least $(1-g^{-1+p(g)})  I_F$. The function $p(g)$ does not depend on $\e$. 

\begin{cor}\label{C:ThickAverage}
With the same assumptions, the average of $F_{\mathrm{sns}}$ over $\cM_g^{>\e}$ is at most 
$${(1+g^{-1+o(1)})} I_F $$
\end{cor}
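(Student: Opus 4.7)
The plan is to exploit the disjoint decomposition $\cM_g = \cM_g^{<\e} \sqcup \cM_g^{>\e}$ and write
$$\int_{\cM_g^{>\e}} F_{\mathrm{sns}} = \int_{\cM_g} F_{\mathrm{sns}} - \int_{\cM_g^{<\e}} F_{\mathrm{sns}},$$
upper bounding the first term on the right directly, lower bounding the second via Theorem \ref{T:ThinExpectedValue}, and finally dividing by $\Vol(\cM_g^{>\e})$.

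For the upper bound on $\int_{\cM_g} F_{\mathrm{sns}}$, I would apply Mirzakhani's integration formula to write it as $\int_0^\infty \ell F(\ell) V_{g-1,2}(\ell,\ell) d\ell$, and then invoke a uniform version of the Mirzakhani--Zograf asymptotic stating that $V_{g-1,2}(\ell,\ell)/\Vol(\cM_g)$ equals $\sinh(\ell/2)^2/(\ell/2)^2$ up to a multiplicative $(1+g^{-1+o(1)})$ factor, uniformly for $\ell$ in the support range $[0,g^{o(1)}]$. This yields
$$\int_{\cM_g} F_{\mathrm{sns}} \leq (1+g^{-1+o(1)})\,\Vol(\cM_g)\,I_F.$$
Combining with the lower bound from Theorem \ref{T:ThinExpectedValue}, subtraction gives
$$\int_{\cM_g^{>\e}} F_{\mathrm{sns}} \leq I_F\,\Vol(\cM_g^{>\e}) + g^{-1+o(1)}\,I_F\cdot O(\Vol(\cM_g)),$$
after collecting the two $(1\pm g^{-1+o(1)})$ factors against $\Vol(\cM_g^{<\e})+\Vol(\cM_g)$.

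To conclude the desired bound on the average, I would divide by $\Vol(\cM_g^{>\e})$ and invoke Mirzakhani's well-known estimate $\Vol(\cM_g^{<\e})/\Vol(\cM_g) = O(\e^2)$ for $\e$ below the $\e_0$ of Theorem \ref{T:ThinExpectedValue} (shrinking $\e_0$ if necessary). This ensures $\Vol(\cM_g)/\Vol(\cM_g^{>\e}) \leq 2$ uniformly in $g$, so the additive $g^{-1+o(1)}$ error survives division as a multiplicative $(1+g^{-1+o(1)})$ factor, yielding exactly the stated bound $(1+g^{-1+o(1)})I_F$ on the average.

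The only non-routine point is ensuring that the uniformity in $\ell$ of the $V_{g-1,2}(\ell,\ell)$ asymptotic and the $o(1)$ in Theorem \ref{T:ThinExpectedValue} are compatible for any fixed support function $s(g)\in o(1)$; taking the larger of the two error exponents is harmless. The main substantive input is therefore the refined uniform volume asymptotic, which is exactly what the volume-estimate appendices are engineered to supply; beyond this I do not anticipate any genuine obstacle.
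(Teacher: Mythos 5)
Your proposal is correct and follows essentially the same route as the paper: write the thick-part integral as the whole-space integral minus the thin-part integral, bound the former by $(1+g^{-1+o(1)})V_gI_F$ via Mirzakhani's integration formula and the uniform sinh/volume asymptotics, bound the latter from below by Theorem \ref{T:ThinExpectedValue}, and divide, using that $\Vol(\cM_g^{<\e})/V_g$ is bounded away from $1$ for small $\e$. The only cosmetic difference is that the paper phrases the bookkeeping in terms of $\delta=\Vol(\cM_g^{<\e})/V_g$ and simply assumes $\delta<1/2$, while you invoke the $O(\e^2)$ thin-volume estimate to justify the same point.
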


\begin{proof}
Let $\delta$ denote the measure of $\cM_g^{<\e}$ divided by the measure of $\cM_g$, so the desired average is 
$$\frac1{1-\delta} \left(\frac1{V_g} \int_{\cM_g} F_{\mathrm{sns}} - \frac{\delta V_g} {V_g} \frac1{\delta V_g} \int_{\cM_g^{<\e}} F_{\mathrm{sns}} \right).$$
Mirzakhani's integration formula gives
$$\frac1{V_g} \int_{\cM_g} F_{\mathrm{sns}} = \frac{1}{V_g} \int_0^\infty F(\ell) \ell V_{g-1, 2}(\ell, \ell) d\ell.$$
The volume asymptotics with error term in \cite[Theorem 1.8]{MirzakhaniZograf:LargeGenus} imply that 
\begin{equation}\label{E:aratio}
\frac{V_{g-k,2k}}{V_g} = 1 + O\left( \frac{k^2}g\right).
\end{equation}
This statement with $k=1$  together with  the sinh upper bound (Lemma \ref{L:sinh}) gives that
$$\frac1{V_g} \int_{\cM_g} F_{\mathrm{sns}} \leq I_F(1+g^{-1+o(1)}).$$ 
Theorem \ref{T:ThinExpectedValue} gives that 
$$\frac1{\delta V_g} \int_{\cM_g^{<\e}} F_{\mathrm{sns}} \geq (1-g^{-1+o(1)})  I_F.$$
Thus the desired average is at most 
\begin{eqnarray*}
&&\frac{I_F}{1-\delta} \left(1 + g^{-1+o(1)} - \delta(1-g^{-1+o(1)}) \right) 
\\&=& \frac{I_F}{1-\delta} \left(1-\delta + g^{-1+o(1)}+ \delta g^{-1+o(1)} \right),
\end{eqnarray*}
giving the result. (Since $\e$ is small we can assume $\delta < 1/2$.)
\end{proof}

The average in Theorem \ref{T:ThinExpectedValue} is the integral over $\cM_g^{<\e}$  divided by the measure of $\cM_g^{<\e}$, and we estimate the numerator and denominator separately. We will always assume $\e$ is small enough so that geodesics of length at most $\e$ are simple and pairwise disjoint.

\begin{prop}\label{P:ThinMeasure}
There is a constant $\e_0>0$ such that
for $\e<\e_0$, the volume of  $\cM_g^{<\e}$ is
$$\left(1+O \left(g^{-1+o(1)}\right) \right) \cdot (1-\exp(-I_\e)) V_g,$$
where 
$$I_\e = \frac12 \int_0^\e \delta \frac{\sinh(\delta/2)^2}{(\delta/2)^2} d\delta.$$
\end{prop}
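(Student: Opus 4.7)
The plan is to compute $\Vol(\cM_g^{<\e})/V_g$ by inclusion-exclusion over short simple closed geodesics, match the resulting expression to the Taylor expansion of $1-\exp(-I_\e)$, and control the error. Choose $\e_0$ small (via the Margulis lemma) so that on every hyperbolic surface all geodesics of length less than $\e_0$ are simple, primitive, and pairwise disjoint. Let $N(X)$ count the unoriented primitive simple closed geodesics on $X$ of length less than $\e$, so that $X\in\cM_g^{<\e}$ exactly when $N(X)\geq 1$. Bonferroni truncation gives, for any $K\geq 1$,
$$\Bigl|\mathbf{1}_{N\geq 1} - \sum_{k=1}^K (-1)^{k+1}\tbinom{N}{k}\Bigr|\leq\tbinom{N}{K+1},$$
so the problem reduces to computing $E_k:=\frac{1}{V_g}\int_{\cM_g}\tbinom{N}{k}\,dX$ for $k\leq K$ and bounding $E_{K+1}$, for a suitable $K=K(g)\to\infty$.

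Now $\binom{N}{k}$ counts $k$-component simple multicurves with every component of length less than $\e$. Applying Mirzakhani's integration formula and grouping by mapping class group orbit writes $E_k$ as a sum over topological types $[\gamma]$ of such multicurves of an integral against the volume polynomial of the cut surface divided by a symmetry factor. The dominant type consists of $k$ pairwise non-homotopic non-separating curves whose union has connected complement of type $(g-k,2k)$; here the symmetry factor is $2^k k!$ and the contribution is
$$\frac{1}{2^k k!\, V_g}\int_{[0,\e]^k}\ell_1\cdots\ell_k\,V_{g-k,2k}(\ell_1,\ell_1,\ldots,\ell_k,\ell_k)\,d\ell.$$
Combining $V_{g-k,2k}/V_g=1+O(k^2/g)$ from \cite[Theorem 1.8]{MirzakhaniZograf:LargeGenus} with the uniform asymptotic $V_{g,n}(L)/V_{g,n}=\prod_i(2\sinh(L_i/2)/L_i)\cdot(1+O(1/g))$ for bounded $L_i$ (the sinh upper bound of Lemma \ref{L:sinh} together with a matching lower bound), and the identity $\int_0^\e\ell\cdot(2\sinh(\ell/2)/\ell)^2\,d\ell=2I_\e$, this contribution equals $(I_\e^k/k!)(1+O(k^2/g))$.

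Every other topological type of $k$-multicurve either contains a separating component or has a complement that splits into several pieces. In either situation the relevant product $\prod_j V_{g_j,n_j}$ is $O(1/g)$ times $V_{g-k,2k}$, by the same Mirzakhani-Zograf asymptotics (essentially $V_{h_1,n_1+1}V_{h_2,n_2+1}/V_{h_1+h_2-1,n_1+n_2+2}=O(1/g)$, so every additional splitting or separating curve costs a factor $1/g$). Since for each $k$ the number of topological types can be bounded crudely, summing yields $E_k=(I_\e^k/k!)(1+O_k(1/g))$.

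Finally, choose $K=K(g)$ going to infinity slowly enough (e.g.\ $K=\log g/\log\log g$) that both the Bonferroni tail $E_{K+1}\lesssim I_\e^{K+1}/(K+1)!$ and the accumulated error $\sum_{k\leq K}(I_\e^k/k!)O(k^2/g)$ are $O(g^{-1+o(1)})\cdot(1-e^{-I_\e})$. Summing, $\sum_{k=1}^K(-1)^{k+1}I_\e^k/k!=1-e^{-I_\e}+O(I_\e^{K+1}/(K+1)!)$, and converting absolute to relative error via $1-e^{-I_\e}\gtrsim\min(I_\e,1)$, we obtain the stated estimate. The main obstacle lies in quantifying the $O(1/g)$ savings for every non-main topological type uniformly in $k$: this requires the volume polynomial asymptotics with the uniformity that is revisited in the paper's appendices, not merely the sinh upper bound.
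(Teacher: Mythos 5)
Your proposal follows essentially the same route as the paper: Bonferroni-truncated inclusion--exclusion over $k$-tuples of disjoint short geodesics, Mirzakhani's integration formula for the main configuration ($k$ non-separating curves with connected complement of type $(g-k,2k)$, symmetry factor $2^kk!$), the sinh approximation together with $V_{g-k,2k}/V_g=1+O(k^2/g)$, and a Taylor-remainder argument with a slowly growing truncation level $K(g)$. The main-term computation, the choice of $K$, and the conversion from absolute to relative error are all in line with the paper's proof.

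The one genuine gap is exactly the one you flag in your last sentence, and it cannot be waved away: your estimate $E_k=(I_\e^k/k!)(1+O_k(1/g))$ carries a constant depending on $k$, and you then sum over $k\le K(g)\to\infty$. The number of topological types of a $k$-component multicurve whose complement disconnects grows exponentially in $k$; the paper's Lemma \ref{L:CountStrata} bounds the number of relevant boundary strata with $q$ components, $q'$ of them non-minimal, by $2^{k+q^2}g^{q'-1}$, so ``bounded crudely for each $k$'' does not suffice. The paper pairs this count with the product bound $\prod_i V_{g_i,n_i}\le V_g(C_1/g)^{q+q'-2}$ of Lemma \ref{L:ProductBound}, obtaining a bad contribution at level $k$ of order $V_g\,I_\e^k(2C_1^2)^{k-1}\sum_{q\ge2}(g^{o(1)}/g)^{q-1}$, and then absorbs the exponential factor $(2C_1^2)^{k-1}$ by choosing $\e_0$ so small that $I_\e\cdot 2C_1^2<1/2$, which makes the sum over $k$ a convergent geometric series. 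This is where the hypothesis $\e<\e_0$ does real work beyond making short geodesics simple and pairwise disjoint; without this uniform-in-$k$ bookkeeping the accumulated error over $k\le K(g)$ is not controlled by $O(g^{-1+o(1)})(1-\exp(-I_\e))$. A second, minor omission: the exact inclusion--exclusion identity also needs the truncated sums to alternate as over- and under-estimates (the paper's Lemma \ref{L:IE}) so that the total truncation error is bounded by the single term $A_{K+1}$, which must itself be split into its good and bad parts and bounded by the two mechanisms above.
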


The implicit $o(1)$ function does not depend on $\e$. When $\e$ is small, $I_\e$ is about $\e^2/4$. 
For fixed $\e$, this agrees with the asymptotic for the volume of $\cM_g^{<\e}$  obtained in \cite{MirzakhaniPetri:Lengths}.

Both $g^{-1+o(1)}$ and $O(g^{-1+o(1)})$  denote terms within a sub-polynomial factor of $g^{-1}$, but the  latter   does not specify the sign of the term.

\begin{proof}
Let $A_k$ be the integral over $\cM_g$ of the function which counts the number of sets $S$ of $k$ disjoint unoriented geodesics, each of length at most $\e$, on a surface $X\in \cM_g$.

\begin{lem}\label{L:IE}
If $n(g)=3g-3$, then the volume of $\cM_g^{<\e}$ is exactly
$$\sum_{k=1}^{n(g)} (-1)^{k+1} A_k.$$
The same sum with $n(g)$ any odd integer gives an upper bound, and with $n(g)$ even it gives a lower bound.  
\end{lem}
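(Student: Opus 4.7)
The plan is to reduce this to a pointwise combinatorial identity (Bonferroni inequality) and then integrate.

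Given $X \in \cM_g$, let $N(X)$ denote the number of closed geodesics on $X$ of length at most $\e$. Since the paper has arranged $\e$ small enough so that any two such geodesics are simple and pairwise disjoint, and since a surface of genus $g$ admits at most $3g-3$ disjoint simple closed curves, we have $N(X) \le 3g-3$ for every $X$. Moreover, by definition, the integrand defining $A_k$ is exactly the cardinality of the set of $k$-element subsets of the collection of short geodesics on $X$, so pointwise
$$A_k(X) = \binom{N(X)}{k}.$$

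Now I would invoke the classical identity
$$\sum_{k=0}^{M} (-1)^k \binom{N}{k} = (-1)^M \binom{N-1}{M}$$
(with $\binom{N-1}{M}=0$ for $M \ge N$), which is immediate by induction on $M$ using Pascal's rule. Rearranging gives
$$\sum_{k=1}^{M}(-1)^{k+1}\binom{N}{k} = 1 - (-1)^M \binom{N-1}{M}$$
for all $N \ge 1$, and the whole sum equals $0$ when $N=0$. In particular, if $M \ge N-1$ (equivalently $\binom{N-1}{M}=0$ when $N \ge 1$, or trivially when $N=0$), the sum equals $\mathbf{1}_{N(X) \ge 1}$. Taking $M = 3g-3 \ge N(X)$ for every $X$ gives the pointwise equality
$$\mathbf{1}_{\cM_g^{<\e}}(X) = \sum_{k=1}^{3g-3}(-1)^{k+1}\binom{N(X)}{k}.$$
Integrating over $\cM_g$ and exchanging sum and integral (a finite sum) yields the equality claim $\Vol(\cM_g^{<\e}) = \sum_{k=1}^{3g-3}(-1)^{k+1}A_k$.

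For the one-sided bounds, I would use the same identity with general $M$. When $M$ is odd,
$$\sum_{k=1}^{M}(-1)^{k+1}\binom{N(X)}{k} = 1 + \binom{N(X)-1}{M} \ge \mathbf{1}_{N(X)\ge 1},$$
and when $M$ is even,
$$\sum_{k=1}^{M}(-1)^{k+1}\binom{N(X)}{k} = 1 - \binom{N(X)-1}{M} \le \mathbf{1}_{N(X)\ge 1},$$
where in each case the $N(X)=0$ case is verified separately (both sides vanish). Integrating over $\cM_g$ produces the upper bound for odd $n(g)$ and the lower bound for even $n(g)$.

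There is no real obstacle here: the content of the lemma is entirely combinatorial (Bonferroni) once one observes the pointwise formula $A_k(X) = \binom{N(X)}{k}$ and uses the simplicity/disjointness of short geodesics to cap $N(X)$ by $3g-3$. The only thing to be careful about is invoking the standing hypothesis on $\e$ to ensure that $N(X)$ really does count a set of disjoint simple curves, so that $A_k$ is indeed a binomial coefficient rather than something more complicated.
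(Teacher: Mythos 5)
Your proof is correct and follows essentially the same route as the paper: the paper's own argument is exactly the identity $\sum_{k=1}^{n}(-1)^{k+1}\binom{r}{k}=1-(-1)^{n}\binom{r-1}{n}$ applied pointwise to the number $r=N(X)\leq 3g-3$ of short geodesics (which form a set of disjoint simple curves by the standing assumption on $\e$), followed by integration. Your write-up just spells out the details the paper leaves implicit.
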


\begin{proof}
This follows directly from the inclusion exclusion principle, or equivalently the identity $$\sum_{k=1}^{n} (-1)^{k+1} {r \choose k} = 1- (-1)^n {r-1 \choose n},$$ where ${r-1 \choose n}$ is defined to be zero if $n \geq r$. Every surface in $\cM_g^{<\e}$ has some number $r$ of geodesics of length at most $\e$, where $1\leq r \leq 3g-3$. 
\end{proof}

Let $n(g)$ be the floor of $\log g / \log \log \log g$. 
For each $i$, write $A_k = G_k + B_k$, where the good contribution $G_k$ is the integral of the number of sets $S$ of $k$ disjoint unoriented geodesics, each of length at most $\e$, where the complement of $S$ is connected, and the bad contribution $B_k$ is defined similarly for sets where $S$ is separating. 

\begin{lem}\label{L:ThinMeasureGood}
There is a constant $\e_0>0$ such that
for $\e<\e_0$,
$$\sum_{k=1}^{n(g)} (-1)^{k+1} G_k = V_g (1+O( g^{-1+o(1)})) (1-\exp(-I_\e)).$$
Moreover $G_{n(g)+1}=O(V_g  g^{-1+o(1)}(1-\exp(-I_\e))).$
\end{lem}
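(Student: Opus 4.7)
The plan is to evaluate each $G_k$ using Mirzakhani's integration formula, show that $G_k$ is approximately $V_g \cdot I_\e^k / k!$ with controlled relative error, and then recognize the resulting alternating series as the Taylor truncation of $V_g(1-e^{-I_\e})$.

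First I would apply Mirzakhani's integration formula to $G_k$. An unordered $k$-tuple of pairwise disjoint simple non-separating curves whose complement is connected has stabilizer of order $2^k k!$ (permuting the $k$ topologically equivalent curves, and for each curve swapping the two sides of the cut). Cutting along such a multicurve produces a connected surface of genus $g-k$ with $2k$ boundary components, paired up with matching lengths. Hence
$$G_k = \frac{1}{2^k k!} \int_{(0, \e]^k} \ell_1 \cdots \ell_k \cdot V_{g-k, 2k}(\ell_1, \ell_1, \ldots, \ell_k, \ell_k) \, d\ell_1 \cdots d\ell_k.$$

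Next I would combine two ingredients: the volume ratio estimate $V_{g-k,2k}/V_g = 1 + O(k^2/g)$ from (\ref{E:aratio}), and a uniform version of the Mirzakhani--Zograf asymptotic
$$\frac{V_{g-k,2k}(\ell_1, \ell_1, \ldots, \ell_k, \ell_k)}{V_{g-k,2k}} = \prod_{i=1}^k \left(\frac{\sinh(\ell_i/2)}{\ell_i/2}\right)^2 \bigl(1 + O(k^2/g)\bigr),$$
valid for $\ell_i \leq \e$ and $k \leq n(g)$ (this is the content to be extracted from Appendix \ref{A:VolPoly}; since the $\ell_i$ are bounded, the sinh factors are themselves bounded uniformly). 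Substituting into the integral and using $\int_0^\e \ell \cdot (\sinh(\ell/2)/(\ell/2))^2 \, d\ell = 2 I_\e$, the factors of $2$ cancel the $2^k$, yielding
$$G_k = V_g \cdot \frac{I_\e^k}{k!} \cdot \bigl(1 + O(k^2/g)\bigr).$$

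For $k \leq n(g) = \lfloor \log g / \log\log\log g \rfloor$ we have $k^2/g \leq n(g)^2/g = g^{-1+o(1)}$, so
$$\sum_{k=1}^{n(g)} (-1)^{k+1} G_k = V_g \bigl(1 + O(g^{-1+o(1)})\bigr) \sum_{k=1}^{n(g)} (-1)^{k+1} \frac{I_\e^k}{k!}.$$
The truncation tail $\bigl|\sum_{k > n(g)} (-1)^{k+1} I_\e^k / k!\bigr|$ is dominated by $I_\e^{n(g)+1}/(n(g)+1)!$. By Stirling, $\log(n(g)+1)! \sim (\log g)(\log\log g)/\log\log\log g$, which grows super-logarithmically in $g$, so $1/(n(g)+1)!$ is smaller than any negative power of $g$. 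Since $I_\e$ is bounded and $1 - e^{-I_\e} \asymp I_\e$ for small $\e$, this tail is absorbed into $O(g^{-1+o(1)}(1-e^{-I_\e}))$, yielding the first claim. The same calculation applied at $k = n(g)+1$ gives $G_{n(g)+1} = V_g \cdot I_\e^{n(g)+1}/(n(g)+1)! \cdot (1+O(g^{-1+o(1)}))$, which by the same super-polynomial smallness is $O(V_g \cdot g^{-1+o(1)}(1-e^{-I_\e}))$, proving the second claim.

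The main obstacle is the uniform refinement of the sinh approximation for $V_{g-k,2k}(L)/V_{g-k,2k}$ with error $O(k^2/g)$ for $k$ growing with $g$ up to $n(g)$: the sinh inequality of Lemma \ref{L:sinh} supplies only an upper bound, whereas the inclusion-exclusion needs a matching two-sided estimate in order for the alternating cancellation to produce the clean exponential $1-e^{-I_\e}$ rather than a weaker bound. Extracting this uniformity from the Mirzakhani--Zograf estimates, via the revisited proofs in Appendix \ref{A:VolPoly}, is the essential input.
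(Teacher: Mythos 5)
Your proposal is correct and follows essentially the same route as the paper: Mirzakhani's integration formula with the $\frac{1}{2^k k!}$ factor, the two-sided sinh estimate combined with $V_{g-k,2k}/V_g = 1+O(k^2/g)$, recognition of the alternating sum as a Taylor truncation of $1-\exp(-I_\e)$, and Stirling's formula to make the truncation tail $I_\e^{n(g)+1}/(n(g)+1)!$ negligible relative to $1-\exp(-I_\e)\asymp I_\e$. The one remark is that the two-sided refinement you flag as the essential input still to be extracted is already packaged as the second half of Lemma \ref{L:sinh}, which applies here since $2k\leq 2n(g)=g^{o(1)}$ and $\sum_i \ell_i\leq 2n(g)\e=g^{o(1)}$.
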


\begin{proof}
Mirzakhani's integration formula gives
$$G_k=\frac1{2^k k!} {\int_0^\e \cdots \int_0^\e} \delta_1 \cdots \delta_k V_{2g-k, 2k}(\delta_1, \delta_1,  \ldots, \delta_k, \delta_k)  d \delta_1 \ldots d \delta_k. $$
Since $k\leq n(g)$, the sinh approximation (Lemma \ref{L:sinh}) gives 
$$G_k =  (1+O(g^{-1+o(1)})) \frac{V_{g-k,2k}} { k!} I_\e^k  .$$
As in equation \eqref{E:aratio}, the volume asymptotics with error term in \cite[Theorem 1.8]{MirzakhaniZograf:LargeGenus} give that $V_{g-k,2k}$ is very close to $V_g$, so this implies the same statement with $V_{g-k,2k}$ replaced by $V_g$. 
Hence 
$$ \sum_{k=1}^{n(g)} (-1)^{k+1} G_k  = V_g \sum_{k=1}^{n(g)} \frac{(-1)^{k+1}}{k!} I_\e^k + O\left(V_g g^{-1+o(1)}\sum_{k=1}^{n(g)} \frac{1}{k!} I_\e^k\right).$$

Taylor's Theorem implies that 
$$\sum_{k=1}^{n(g)} \frac{(-1)^{k+1}}{k!} I_\e^k = 1-\exp(-I_\e) + O\left( \frac{(I_\e)^{n(g)+1}}{(n(g)+1)!} \right).$$
We think of $1-\exp(-I_\e)$ as being the main term, and note that it is approximately $I_\e$ when $\e$ is small. 
We need to compare the error here and above to this main term. We start by noting that 
$$\sum_{k=1}^{n(g)} \frac{1}{k!} I_\e^k \leq \exp(I_\e)-1.$$
Since this is also about $I_\e$ when $\e$ is small, we have 
$$V_g g^{-1+o(1)}\sum_{k=1}^{n(g)} \frac{1}{k!} I_\e^k= O\left( V_g g^{-1+o(1)} (1-\exp(-I_\e))   \right),$$
bounding with the first source of error above. 

Next we consider the error the Taylor approximation, namely 
$$
\frac{(I_\e)^{n(g)+1}}{(n(g)+1)!} \leq \frac{I_\e}{(n(g)+1)!} =  \frac{O(1-\exp(-I_\e))}{(n(g)+1)!} , 
$$
where we assume in particular that $\e$ is small enough to get $I_\e<1$. 
This error term is small enough when $(n(g)+1)!>g$.
Using Stirling's formula, this is certainly true when 
$$ (n(g)/e)^{n(g)} = e^{(\log(n(g))-1) n(g)} > g,$$
which is guaranteed by our choice of $n(g)$. 

The final statement follows from the arguments above, and can also be obtained by summing up to $n(g)+1$ instead of $n(g)$ and then subtracting the two sums to isolate the $k=n(g)+1$ term. 
\end{proof}

\begin{lem}\label{L:ThinMeasureBad}There is a constant $\e_0>0$ such that
for $\e<\e_0$,
 $$\sum_{k=1}^{n(g)+1} B_k \leq V_g \cdot g^{-1+o(1)} (1-\exp(-I_\e)).$$
\end{lem}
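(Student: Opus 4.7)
The plan is to expand $B_k$ via Mirzakhani's integration formula as a sum over topological types of $k$-multicurves with disconnected complement, and then to exploit the fact that each such separating configuration contributes an additional factor of $g^{-1+o(1)}$ relative to the analogous non-separating contribution in $G_k$.

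Concretely, I would index separating configurations by stable graphs $\Gamma$ with $k$ edges and $c = c(\Gamma) \geq 2$ vertices, each vertex $v$ labeled by a genus $g_v \geq 0$ and equipped with $n_v$ half-edges, subject to $\sum_v n_v = 2k$ and $\sum_v g_v = g - k + c - 1$ (by additivity of Euler characteristic). Mirzakhani's integration formula then gives
$$B_k = \sum_{\Gamma} \frac{1}{|\mathrm{Sym}(\Gamma)|} \int_{[0,\e]^k} \delta_1 \cdots \delta_k \prod_{v \in V(\Gamma)} V_{g_v,n_v}\bigl(\vec{\delta}|_v\bigr) \, d\delta_1 \cdots d\delta_k,$$
where $\vec{\delta}|_v$ collects the lengths on the half-edges incident to $v$. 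Applying the sinh approximation (Lemma \ref{L:sinh}) decouples the integrals at a multiplicative cost of $(1 + O(g^{-1+o(1)}))$ and reduces the $\Gamma$-summand to
$$\frac{(2 I_\e)^k}{|\mathrm{Sym}(\Gamma)|} \prod_v V_{g_v,n_v}.$$

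The decisive input is a uniform volume ratio estimate: for each separating $\Gamma$,
$$\frac{\prod_v V_{g_v,n_v}}{V_g} = O\!\bigl(g^{-1+o(1)}\bigr).$$
This follows from the Mirzakhani--Zograf asymptotic $V_{g,n} \sim C (2g-3+n)!\,(4\pi^2)^{2g-3+n}/\sqrt{g}$ (in the uniform form of Appendix \ref{A:Vols}): since $\sum_v (2g_v-3+n_v) = 2g-2-c$, when $c \geq 2$ the factorial ratio alone loses at least one factor of $1/(2g-3)$, with further exponential suppression for balanced splits coming from the multinomial denominator. Inserting this bound and summing over $\Gamma$ (the symmetry factors $|\mathrm{Sym}(\Gamma)|^{-1}$ playing the role of the $1/(2^k k!)$ in the non-separating case), one obtains $B_k \leq V_g \, g^{-1+o(1)} I_\e^k / k!$ up to absolute constants. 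Summing $k$ from $1$ to $n(g)+1$ then yields $V_g \, g^{-1+o(1)} (\exp(I_\e)-1)$, which for small $\e$ is comparable to $V_g \, g^{-1+o(1)}(1-\exp(-I_\e))$, as required.

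The main obstacle is controlling the sum over stable graphs uniformly as $k$ grows: the number of topological types with $k$ edges is combinatorially large, and one must verify that the symmetry factors $|\mathrm{Sym}(\Gamma)|^{-1}$ together with the extra $1/g$-suppression from the volume ratio absorb this growth over the entire range $k \leq n(g)+1 = O(\log g / \log\log\log g)$. A subsidiary technical point is that pairs of pants and one-holed tori violate the Mirzakhani--Zograf asymptotic and must be handled as special cases; however, their volumes are bounded constants, so these cases produce even smaller contributions than the generic estimate predicts.
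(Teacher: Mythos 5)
Your overall strategy coincides with the paper's: expand $B_k$ over topological types of separating $k$-multicurves via Mirzakhani's integration formula, use the sinh bound to decouple the length integrals into a factor of $I_\e^k$, bound the product of volumes for each configuration, and sum over configurations and over $k\leq n(g)+1$. However, the decisive input you state -- a \emph{flat} bound $\prod_v V_{g_v,n_v}/V_g = O(g^{-1+o(1)})$ uniform over separating graphs -- is not strong enough, and the step you defer as ``the main obstacle'' is precisely where the argument fails with only that input. The number of configurations with $q$ components, $q'$ of which are not thrice-marked spheres or once-marked tori, grows like $g^{q'-1}$, coming from the choice of the genera $g_v$ of the large components with prescribed sum (this is Lemma \ref{L:CountStrata}, which gives $2^{k+q^2}g^{q'-1}$). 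Already for a single separating curve ($k=1$, $q=q'=2$) there are $O(g)$ genus splits, so a per-configuration bound of $g^{-1+o(1)}$ yields a total of $g^{o(1)}$, off by a full factor of $g$ from the claim. What is actually needed is a bound that \emph{improves with the number of components}, namely $\prod_v V_{g_v,n_v}/V_g \leq (C_1/g)^{q+q'-2}$ (Lemma \ref{L:ProductBound}); its exponent exactly absorbs the $g^{q'-1}$ count of genus assignments, leaving $g^{-(q-1)}\leq g^{-1}$ per value of $q$. Your aside about ``further exponential suppression for balanced splits'' gestures in this direction but is not the right mechanism: the gain must be per additional component (large or small), and it must then be combined with the $2^{k+q^2}$ graph count uniformly over $k\leq n(g)+1$, which the paper does by noting $2^{q}\leq 2^{n(g)+1}=g^{o(1)}$ and requiring $\e_0$ small enough that $I_\e\cdot 2C_1^2<1/2$ so the geometric sum in $k$ converges to $O(I_\e)$.

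Two smaller points. First, for an upper bound on $B_k$ you should invoke only the unconditional sinh \emph{upper} bound of Lemma \ref{L:sinh}, not the two-sided approximation: the lower-bound direction needs $n=g^{o(1)}$ and a multiplicative error you do not want to track here, and it buys you nothing for an upper estimate. Second, your final display $B_k\leq V_g\,g^{-1+o(1)}I_\e^k/k!$ followed by summation is the right shape, but note the paper instead keeps the sum over $q$ explicit, $\sum_{q=2}^{k+1}(g^{o(1)}/g)^{q-1}$, and extracts a single factor of $I_\e$ (comparable to $1-\exp(-I_\e)$) rather than resumming an exponential; either bookkeeping works once the component-dependent volume bound is in place.
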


\begin{proof}
By Mirzakhani's Integration Formula and the sinh upper bound in Lemma \ref{L:sinh}, 
$B_k$
is at most $I_\e^k$ times a sum of products of $V_{g_i, n_i}$, summed over all ways of pinching $k$ curves to a get a surface with at least $2$ components, where the components have genus $g_i$ and $n_i$ nodes.  (One can equally well cut along each of the $k$ curves, in which case one obtains a disconnected surface with components having $n_i$ boundary circles instead of $n_i$ nodes.)   

Lemma \ref{L:CountStrata} states that there are at most $2^{k+q^2} g^{q'-1}$ ways to pinch a collection of $k$ curves and get a nodal surface with $q$ components, $q'$ of which aren't spheres with three marked points or tori with one marked point. Lemma \ref{L:ProductBound} states that for each such configuration, 
$$\prod_{i=1}^q V_{g_i, n_i}
\leq      V_g     \left( \frac{C_1}{g}\right)^{q+q'-2}.$$
Note that 
$$2^{k+q^2} g^{q'-1} \left( \frac{C_1}{g}\right)^{q+q'-2} = \frac{2^{k+q^2} C_1^{q+q'-2}}{g^{q-1}} \leq  (2C_1^2)^{k-1} \left(\frac{ g^{o(1)} }{g}\right)^{q-1},$$ 
where the final inequality uses that  $q\leq k+1 \leq n(g)+2$ and hence  $2^q \leq 2^{n(g)+2} = g^{o(1)}$.  

For each $q$, there are at most $q$ values of $q'.$  Also, $q$ is at most $k+1.$  Thus,
$$ \sum_{k=1}^{n(g)+1} B_k \leq  V_g \sum_{k=1}^{n(g)+1} I_\e^k (2C_1^2)^{k-1}  \sum_{q=2}^{k+1} \left(\frac{ g^{o(1)} }{g}\right)^{q-1}. $$

If $\e_0$ is small enough, then $I_\e \cdot (2 C_1^2) < 1/2$, so this is bounded by 
 $$V_g I_\e \sum_{k=1}^{n(g)+1} \frac{1}{2^{k-1}} \sum_{q=2}^{n(g)+1} \left(\frac{ g^{o(1)} }{g}\right)^{q-1} =  V_g I_\e  g^{o(1)-1}.$$
Since $1-\exp(-I_\e)$ is comparable to $I_\e$, this gives the result. 
\end{proof}

To conclude the proof of the proposition, first note that the alternating over/underestimate of the truncated inclusion exclusion bounds from Lemma \ref{L:IE} shows that the error in the truncated inclusion exclusion is at most $A_{n(g)+1}=G_{n(g)+1}+B_{n(g)+1}$. Lemma  \ref{L:ThinMeasureGood} bounds $G_{n(g)+1}$, and Lemma \ref{L:ThinMeasureBad} overestimates $B_{n(g)+1}$, since the sum goes up to $n(g)+1$. 

Hence Lemmas \ref{L:ThinMeasureGood} and \ref{L:ThinMeasureBad} give the proposition.
\end{proof}

\begin{prop}\label{P:ThinIntegral}
There is a constant $\e_0>0$ such that
for $\e<\e_0$ and $F$ a non-negative function with support in $[0, g^{o(1)}]$, the integral of $F_{\mathrm{sns}}$ over $\cM_g^{<\e}$ at least  
$$V_g(1-g^{-1+o(1)}) \cdot (1-\exp(-I_\e)) \cdot I_F .$$
\end{prop}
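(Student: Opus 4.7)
The plan is to mirror the inclusion--exclusion argument used to prove Proposition \ref{P:ThinMeasure}, but carrying along the extra factor $F_{\mathrm{sns}}$. First I would truncate the identity of Lemma \ref{L:IE} at an even integer near $n(g) = \lfloor \log g / \log\log\log g \rfloor$ so that the partial sum is a pointwise lower bound on $\mathbf{1}_{\cM_g^{<\e}}$. Multiplying by $F_{\mathrm{sns}} \ge 0$ and integrating,
\begin{align*}
\int_{\cM_g^{<\e}} F_{\mathrm{sns}} \;\ge\; \sum_{k=1}^{n(g)} (-1)^{k+1} \int_{\cM_g} F_{\mathrm{sns}}(X)\, N_k(X)\, dX,
\end{align*}
where $N_k(X)$ counts unordered $k$-element sets of pairwise disjoint closed geodesics of length at most $\e$ on $X$.

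Next I would split each integrand as $F_{\mathrm{sns}}(X) N_k(X) = G_k(X) + B_k(X)$, where the \emph{good} part $G_k$ sums $F(\ell(\gamma))$ over pairs $(\gamma,S)$ consisting of an oriented simple non-separating geodesic $\gamma$ disjoint from a disjoint $k$-element set $S$ of length-$\le\e$ geodesics such that $\gamma \cup S$ has connected complement, and the remainder $B_k$ is non-negative. Mirzakhani's integration formula applied to the topological type of $k{+}1$ disjoint non-separating simple curves with connected complement $\Sigma_{g-k-1,2k+2}$ yields
\begin{align*}
\int G_k = \frac{1}{2^k k!} \int_0^\infty \ell F(\ell) \int_0^\e\!\!\!\cdots\!\!\int_0^\e \delta_1\cdots\delta_k\, V_{g-k-1,2k+2}(\ell,\ell,\delta_1,\delta_1,\ldots,\delta_k,\delta_k)\, d\delta_1\cdots d\delta_k\, d\ell.
\end{align*}
The sinh approximation (Lemma \ref{L:sinh})---valid since $\ell \le g^{o(1)}$ and $\delta_i \le \e$---combined with \eqref{E:aratio} for $V_{g-k-1,2k+2}/V_g$ reduces this to $(1+O(g^{-1+o(1)})) V_g I_F I_\e^k/k!$. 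The alternating sum up to $n(g)$ then gives the main term $V_g I_F (1-\exp(-I_\e))(1+O(g^{-1+o(1)}))$ by the same Taylor-tail and Stirling estimates used in Lemma \ref{L:ThinMeasureGood}.

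The bad contribution $\sum_k \int B_k$ splits into (i) configurations where $\gamma$ is disjoint from $S$ but $\gamma \cup S$ separates, and (ii) configurations where $\gamma$ transversely crosses at least one curve of $S$. Case (i) is an immediate analogue of Lemma \ref{L:ThinMeasureBad}: the $\ell$-integral contributes a factor bounded by $I_F$ via the sinh upper bound, while the enumeration of separating nodal types is handled by Lemmas \ref{L:CountStrata} and \ref{L:ProductBound}, yielding a contribution $\ll V_g g^{-1+o(1)} I_F(1-\exp(-I_\e))$.

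The main obstacle is case (ii): since $\gamma$ can have length up to $g^{o(1)}$ it may enter the long collars around short curves in $S$, so $\gamma \cup S$ is in general a graph rather than a simple multicurve and falls outside the direct scope of Mirzakhani's formula. I would condition on the maximal subset $S' \subseteq S$ disjoint from $\gamma$, apply Mirzakhani to $\gamma \cup S'$, and bound the expected number of additional length-$\le\e$ geodesics in the complement that cross $\gamma$ using the collar lemma---each such curve contributes at most $\ell(\gamma)/(2\log(1/\e))$ intersection points, which translates into geometric-series decay in $k - |S'|$. Summing against the good-case structure produces $\sum_k \int B_k \ll V_g g^{-1+o(1)} I_F(1-\exp(-I_\e))$, and adding the truncation error $\int G_{n(g)+1}$ (bounded as in Lemma \ref{L:ThinMeasureGood}) completes the proof.
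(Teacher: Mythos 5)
Your overall architecture (truncated inclusion--exclusion at an even $n(g)$, good/bad splitting, sinh estimates plus the volume-ratio bound for the main term) matches the paper's. But there is one genuine gap, and it is exactly the point you flag as ``the main obstacle'': your case (ii), where $\gamma$ transversely meets a curve of $S$. Because the truncated inclusion--exclusion sum is alternating, you cannot simply discard $B_k$; you must bound $\sum_k \int B_k$ from above, and for configurations where $\gamma\cup S$ is not a multicurve, Mirzakhani's integration formula does not apply. Your proposed remedy --- conditioning on the maximal disjoint subset $S'$ and invoking the collar lemma --- controls the number of crossing curves \emph{on a fixed surface}, but it does not explain how to integrate such non-multicurve configurations over $\cM_g$: you would need an upper bound for the Weil--Petersson average of the number of pairs $(\gamma,\delta)$ with $i(\gamma,\delta)\geq 1$, $\ell(\delta)\leq\e$, $\ell(\gamma)\leq g^{o(1)}$, and no tool in the paper (or in your sketch) supplies this. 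As written, case (ii) is not a proof.

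The fix is to never create case (ii) in the first place, which is what the paper does: define $A_k'$ to be the integral of $\sum_\gamma F(\ell(\gamma))$ times the number of $k$-element sets $S$ of disjoint short geodesics \emph{that are also disjoint from $\gamma$}. The truncated inclusion--exclusion still yields a valid lower bound: for a fixed $\gamma$, letting $r(\gamma)$ be the number of short geodesics disjoint from $\gamma$, the identity $\sum_{k=1}^{n}(-1)^{k+1}\binom{r}{k}=1-\binom{r-1}{n}\leq 1$ for even $n$ shows $\gamma$ contributes at most once when $r(\gamma)\geq 1$, and contributes $0$ when $r(\gamma)=0$; since $F\geq 0$ and every surface carrying a $\gamma$ with $r(\gamma)\geq 1$ lies in $\cM_g^{<\e}$, the sum $\sum_{k=1}^{n(g)}(-1)^{k+1}A_k'$ is a lower bound for $\int_{\cM_g^{<\e}}F_{\mathrm{sns}}$. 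With that definition, every configuration counted is a genuine multicurve $\gamma\cup S$, your case (i) becomes the entire bad term and is handled exactly as in Lemma \ref{L:ThinMeasureBad} (with the extra $I_F$ factor from the $\ell$-integral), and your computation of the good term via $V_{g-k-1,2k+2}$ and the sinh approximation goes through unchanged. The cost of this definition is only that geodesics $\gamma$ all of whose nearby short curves intersect $\gamma$ are undercounted, which is harmless for a lower bound.
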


\begin{proof}
The proof is almost identical to the previous proposition, so we only give a sketch. Let $n(g)$ be an even integer closest to $\log g / \log \log \log g.$

Let $A_k'$ be the integral over $\cM_g$ of the sum over simple non-separating geodesics $\gamma$ on $X\in \cM_g$ of
$F(\ell_X(\gamma))$ times the number of sets  $S$ of $k$ disjoint geodesic of length at most $\e$ all of which are disjoint from $\gamma$. 

As in Lemma \ref{L:IE}, since $n(g)$ is even, the desired integral is bounded below by
$$\sum_{k=1}^{n(g)} (-1)^{k+1} A_k'.$$
Indeed, if $\gamma$ is a simple non-separating geodesic, let $r$ denote the number of geodesics of length at most $\e$ disjoint from $\gamma$. If $r=0$, $\gamma$ does not contribute to this sum, and if $r>0$ then, since $n(g)$ is even, the proof of Lemma \ref{L:IE} shows that $\gamma$ contributes at most once.   

For each $i$, decompose $A_k' = G_k' + B_k'$, where $G_k'$ is the contribution from $S\cup \gamma $  non-separating, and $B_k$ is contribution from $S\cup \gamma $ separating.

\begin{lem}\label{L:ThinIntegralGood}
There is a constant $\e_0>0$ such that
for $\e<\e_0$,
$$\sum_{k=1}^{n(g)} (-1)^{k+1} G_k' = V_g (1+g^{-1+o(1)})\cdot (1-\exp(-I_\e)) \cdot I_F. $$
Moreover $G_{n(g)+1}'=O(V_g  g^{-1+o(1)}\cdot (1-\exp(-I_\e)) \cdot I_F).$
\end{lem}

\begin{proof}
In this case, because of our assumption on the support of $F$, the sinh approximation gives  
$$G_k' =  (1-g^{-1+o(1)}) \frac{V_g} { k!} I_\e^k I_F,$$
 and otherwise the proof proceeds as in Lemma \ref{L:ThinMeasureGood}, since this expression for $G_k'$ is $I_F$ times the expression for $G_k$ that appeared in Lemma \ref{L:ThinMeasureGood}.
\end{proof}

\begin{lem}\label{L:ThinIntegralBad}
$$\sum_{k=1}^{n(g)+1} B_k' \leq V_g (g^{-1+o(1)}) \cdot(1-\exp(-I_\e))\cdot I_F.$$
\end{lem}

\begin{proof}
The proof is similar to Lemma \ref{L:ThinMeasureBad}.
\end{proof}

The proposition follows from Lemmas \ref{L:ThinIntegralGood} and \ref{L:ThinIntegralBad}.  
\end{proof}

\section{Proof of Theorem \ref{T:GeometricMain}}\label{S:GeometricMain}

Fix $\kappa$ and $D$, and consider the locus $\cN_g\subset \cM_g$  where 
\begin{enumerate}
\item there are no separating multi-curves of total length less than $(\kappa/2) \log(g)$ whose complement has two components, and 
\item there are no separating multi-curves of total length less than $2D\log(g)$ whose complement has two components, each of area at least  $2\pi (4D+1)$. 
\end{enumerate}


\begin{lem}\label{L:NgMeasure} 
The measure of the complement of $\cN_g$ is 
$$O( g^{\kappa-1} V_g    ). $$
\end{lem}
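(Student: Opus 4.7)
The plan is Markov's inequality: the measure of $\cM_g \setminus \cN_g$ is at most the expected number, under the normalized Weil-Petersson measure, of separating multi-curves violating either condition, so it suffices to bound each such expectation by $O(g^{\kappa - 1})$. For a fixed topological type $\tau$ of separating multi-curve with $k$ components whose complement is $\Sigma_1 \sqcup \Sigma_2$ with genera $g_1, g_2$ and boundary counts $n_1, n_2$ (so $n_1 + n_2 = 2k$ and $g_1 + g_2 + k - 1 = g$), Mirzakhani's integration formula expresses the expected number of length-$\leq L$ representatives of $\tau$ as a symmetry-weighted integral of $\prod_i x_i \cdot V_{g_1, n_1}(\cdot) V_{g_2, n_2}(\cdot)$ over the simplex $\{\sum x_i \leq L\}$, where each $x_i$ is the length of one component of the multi-curve, appearing once in each $V_{g_j, n_j}$ if it has sides in both pieces and twice in the same $V$ if it lies interior to one piece. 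After applying the sinh upper bound to each $V_{g_i, n_i}$ and the crude estimate $\sinh(x/2) \leq (x/2) e^{x/2}$, this integral is dominated by
$$\frac{V_{g_1, n_1} V_{g_2, n_2}}{V_g} \cdot e^L \cdot \frac{L^{2k}}{(2k)!},$$
and the Mirzakhani-Zograf asymptotics convert, up to $g^{o(1)}$ factors,
$$\frac{V_{g_1, n_1} V_{g_2, n_2}}{V_g} \leq g^{o(1)} \binom{2g-2}{m_1}^{-1}, \qquad m_i := 2g_i - 2 + n_i.$$

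For condition 1 ($L = (\kappa/2)\log g$), the inverse binomial is maximized at the configurations with $\min(m_1, m_2) = 1$, where one piece is a one-holed torus or a pair of pants (with the remaining $k-1$ curves living inside the other piece); each such case contributes a ratio of order $1/g$. The factorial $(2k)!$ together with the rapid growth of the binomial away from these extremes confines the sum over topological types to these dominant cases, yielding total contribution $O(e^L \log(g)/g) = O(g^{\kappa/2 - 1 + o(1)})$, which is $o(g^{\kappa - 1})$ since $\kappa > 0$.

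For condition 2 ($L = 2D\log g$, but with each complement piece of area at least $2\pi(4D+1)$), the area bound forces $m_i \geq 4D + 1$ on both sides, so $\binom{2g-2}{m_1}^{-1} = O(g^{-(4D+1)})$ uniformly across the allowable types. The resulting expected count is at most $O(e^L \cdot g^{-(4D+1)+o(1)}) = O(g^{-2D-1+o(1)})$, comfortably smaller than $g^{\kappa-1}$. Summing the two bounds gives the lemma. I anticipate the main technical chore to be the careful bookkeeping of topological types, symmetry factors, and boundary pairings between the two pieces, and verifying that the enumeration of types contributes only a $g^{o(1)}$ factor; this requires only uniform control of the Mirzakhani-Zograf asymptotics.
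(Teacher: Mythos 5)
Your argument is correct and follows essentially the same route as the paper, which simply packages the first-moment computation you carry out inline as Corollary~\ref{C:SepVol} (proved in the appendix with exactly the ingredients you list: Mirzakhani's integration formula, the sinh/exponential upper bound, the simplex integral $L^{2k}/(2k)!$, and the Mirzakhani--Zograf volume ratio bounds) and then applies it with $a=1$ for the first condition and $a=4D+1$ for the second. One small caveat: your sharpened exponent $g^{\kappa/2-1+o(1)}$ for the first condition is not justified as written, since summing $L^{2k}/(2k)!$ over the unboundedly many admissible $k$ costs another factor of $e^{L}$ rather than $\log g$; but this only degrades your bound to $O(e^{2L}/g)=O(g^{\kappa-1})$ (and to $O(g^{-1})$ for the second condition), which is exactly what the lemma asserts.
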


\begin{proof}
 Corollary \ref{C:SepVol} states that, for integer $a\geq 0$, the probability that a surface in $\cM_g$ has a multi-geodesic of length at most $L$ cutting the surface into two components each area at least $2\pi a$  is 
$$O(e^{2L} \cdot g^{-a}) .$$
Thus, the probability of having a separating multi-curve of total length less than $(\kappa/2) \log(g)$ is 
$$O( g^\kappa \cdot g^{-1})$$
and the probability of having a separating multi-curve of total length less than $2D\log(g)$ cutting the surface into two components each area at least $2\pi (4D+1)$  is 
$$O( g^{4D} \cdot g^{-(4D+1)}).$$
This proves the lemma.
\end{proof}

Define $\cN_g^{>\e}= \cN_g \cap \cM_g^{>\e}$. For now we require only that $\e$ is smaller than some universal constant, but ultimately we will take $\e$ to zero as $g\to \infty$. Throughout this section, we assume $F$ is supported on $[0, D\log(g)]$. 

\begin{lem}\label{L:snsMain}
The average over $\cN_g^{>\e}$ of $F_{\mathrm{sns}}$ is at most 
$$\left(1+ O(g^{\kappa-1}) \right) I_F.$$
\end{lem}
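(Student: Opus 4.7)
The plan is to deduce this lemma essentially for free from Corollary \ref{C:ThickAverage}, which controls the average of $F_{\mathrm{sns}}$ over the larger set $\cM_g^{>\e}$, by absorbing the volume ratio between $\cN_g^{>\e}$ and $\cM_g^{>\e}$ into the error term. Since $\cN_g^{>\e} \subset \cM_g^{>\e}$ and $F \geq 0$ (so $F_{\mathrm{sns}} \geq 0$), the elementary inclusion gives
$$\int_{\cN_g^{>\e}} F_{\mathrm{sns}} \leq \int_{\cM_g^{>\e}} F_{\mathrm{sns}} \leq (1+g^{-1+o(1)})\, I_F \cdot \Vol(\cM_g^{>\e}),$$
where the last inequality is Corollary \ref{C:ThickAverage}. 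Dividing by $\Vol(\cN_g^{>\e})$ reduces the task to proving the volume ratio bound
$$\frac{\Vol(\cM_g^{>\e})}{\Vol(\cN_g^{>\e})} = 1 + O(g^{\kappa-1}).$$

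For the ratio bound, I would first note that $\cM_g^{>\e} \setminus \cN_g^{>\e} \subset \cM_g \setminus \cN_g$, so Lemma \ref{L:NgMeasure} gives
$$\Vol(\cM_g^{>\e}) - \Vol(\cN_g^{>\e}) \leq \Vol(\cM_g \setminus \cN_g) = O(g^{\kappa-1} V_g).$$
Next, I would show $\Vol(\cN_g^{>\e}) = \Theta(V_g)$ using
$$\Vol(\cN_g^{>\e}) \geq V_g - \Vol(\cM_g^{<\e}) - \Vol(\cM_g \setminus \cN_g).$$
By Proposition \ref{P:ThinMeasure}, $\Vol(\cM_g^{<\e})/V_g \to 1-\exp(-I_\e)$ which is arbitrarily small for $\e$ small (and bounded by, say, $1/2$ once $\e$ is below a universal constant), while $\Vol(\cM_g \setminus \cN_g) = o(V_g)$ by Lemma \ref{L:NgMeasure}. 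Hence $\Vol(\cN_g^{>\e}) \geq \tfrac13 V_g$ for $g$ sufficiently large, and dividing the difference by $\Vol(\cN_g^{>\e})$ yields the claimed ratio.

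Combining, the average over $\cN_g^{>\e}$ is at most
$$(1+g^{-1+o(1)})(1 + O(g^{\kappa-1}))\, I_F = (1+O(g^{\kappa-1}))\, I_F,$$
where the final equality uses that $\kappa>0$ is fixed so that $g^{-1+o(1)}$ is absorbed into $O(g^{\kappa-1})$. There is no real obstacle here: the lemma is the formal statement that deleting the measure-$o(V_g)$ bad locus $\cM_g \setminus \cN_g$ from the thick part cannot inflate the average of the nonnegative quantity $F_{\mathrm{sns}}$ by more than a factor $1+O(g^{\kappa-1})$, and both ingredients (the thick-part average from Corollary \ref{C:ThickAverage} and the measure bound from Lemma \ref{L:NgMeasure}) are already in hand.
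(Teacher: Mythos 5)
Your proposal is correct and follows essentially the same route as the paper: bound the integral over $\cN_g^{>\e}$ by the integral over $\cM_g^{>\e}$, apply Corollary \ref{C:ThickAverage}, and control the volume ratio $\Vol(\cM_g^{>\e})/\Vol(\cN_g^{>\e})$ via Lemma \ref{L:NgMeasure} together with a crude lower bound $\Vol(\cN_g^{>\e})\gtrsim V_g$. The only difference is cosmetic: you justify the lower bound on $\Vol(\cN_g^{>\e})$ in slightly more detail than the paper, which simply asserts the ``extremely weak bound'' $\Vol(\cN_g^{>\e})\geq V_g/2$.
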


\begin{proof}
Compute 
\begin{eqnarray*}
\frac{1}{\Vol(\cN_g^{>\e})}\int_{\cN_g^{>\e}} F_{\mathrm{sns}} &=& \frac{\Vol(\cM_g^{>\e})}{\Vol(\cN_g^{>\e})} \cdot \frac{1}{\Vol(\cM_g^{>\e})} \int_{\cN_g^{>\e}} F_{\mathrm{sns}}
\\&\leq& \frac{\Vol(\cM_g^{>\e})}{\Vol(\cN_g^{>\e})} \cdot \frac{1}{\Vol(\cM_g^{>\e})} \int_{\cM_g^{>\e}} F_{\mathrm{sns}}.
\end{eqnarray*}
We will separately give bounds in the first and second factor of this expression. 

Corollary \ref{C:ThickAverage} states that the second factor is at most $(1+g^{-1+o(1)}) I_F$. Note that 
\begin{eqnarray*}
\frac{\Vol(\cM_g^{>\e})}{\Vol(\cN_g^{>\e})}  &=& 1 + \frac{\Vol(\cM_g^{>\e}\setminus \cN_g^{>\e} )}{\Vol(\cN_g^{>\e})}
\\ & \leq &
1 + \frac{\Vol(\cM_g\setminus \cN_g )}{\Vol(\cN_g^{>\e})}
\\ & \leq &
1 + \frac{2\Vol(\cM_g\setminus \cN_g )}{V_g},
\end{eqnarray*}
where in the last line we used the extremely weak bound $\Vol(\cN_g^{>\e}) \geq V_g/2$. So Lemma \ref{L:NgMeasure} gives that the first factor is at most $1+ O( g^{\kappa-1})$. 
\end{proof}

\begin{prop}\label{P:nsnsMain}
The average over $\cN_g^{>\e}$ of $F_{\mathrm{all}}-F_{\mathrm{sns}}$  is at most 
$$O( g^{o(1)-1} I_F).$$
\end{prop}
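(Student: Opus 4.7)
The plan is to decompose $F_{\mathrm{all}} - F_{\mathrm{sns}}$ into contributions from simple separating and non-simple geodesics, and for the latter parametrize by the topological type $T$ of the minimal essential filling subsurface $Y = Y(\gamma)$. Each contribution is bounded via Mirzakhani's integration formula, combined with Mirzakhani-Zograf volume asymptotics to extract a factor of $g^{-1+o(1)}$ from the relevant volume ratio, and Theorem \ref{T:GraphBound} to control the residual geodesic count on $Y$.

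For simple separating geodesics with genus split $(g_1, g-g_1)$ and $\ell(\gamma) \leq D\log g$, condition (2) on $\cN_g$ forces one side to have area at most $2\pi(4D+1)$, hence bounds $\min(g_1, g-g_1)$ by a constant depending on $D$. Mirzakhani's integration formula bounds the contribution by
\[
\frac{1}{2\,V_g} \int_0^{D\log g} F(\ell)\, \ell\, V_{g_1, 1}(\ell)\, V_{g-g_1, 1}(\ell) \, d\ell,
\]
and Mirzakhani-Zograf gives $V_{g-g_1, 1}/V_g = O(g^{-(2g_1-1)+o(1)}) = O(g^{-1+o(1)})$ for $g_1 \geq 1$. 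Combined with the sinh approximation for $V_{g-g_1, 1}(\ell)$ and the explicit polynomial form of $V_{g_1, 1}(\ell)$, this gives $O(g^{-1+o(1)}) I_F$ per term, summing over the bounded list of $g_1$.

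For non-simple $\gamma$ filling $Y$ of type $(g', n')$ with $\chi(Y) \leq -1$, the boundary $\partial Y$ has total length at most $2\ell(\gamma) \leq 2D\log g$. When $X \setminus Y$ is connected, condition (2) on $\cN_g$ forces either $Y$ or $X \setminus Y$ to have area at most $2\pi(4D+1)$; the disconnected case is handled by an analogous argument after grouping components. By passing to the side of bounded complexity, we may assume $|\chi(Y)|$ is bounded in terms of $D$. Mirzakhani's integration formula then gives a bound
\[
\frac{1}{\mathrm{sym}(T)\, V_g} \int V_{g-g', n''}(\vec L)\, N_T(\vec L, F) \prod L_i \, d\vec L,
\]
where $N_T(\vec L, F) = \int_{\cM_{g', n'}(\vec L)} \sum_{\gamma \text{ filling}} F(\ell(\gamma))\, dY$, the ratio $V_{g-g', n''}/V_g = O(g^{-1+o(1)})$ by Mirzakhani-Zograf (using $|\chi(Y)| \geq 1$), and the inner count on each $Y$ is bounded by $(\log g)^{O(1)} = g^{o(1)}$ via Theorem \ref{T:GraphBound} with $L_0 = (\kappa/2)\log g$ and systole $\geq \e$.

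The main obstacle is verifying the no-short-pants-or-tori hypothesis of Theorem \ref{T:GraphBound} on $Y$: condition (1) on $\cN_g$ only rules out separating multi-curves of $X$ with exactly two complementary components, so one must argue that a short pants or one-holed torus inside $Y$ yields such a multi-curve of $X$, possibly after absorbing part of $\partial Y$. A case analysis on how the pants or torus boundaries sit relative to $\partial Y$ is required, as is special treatment of very small $Y$ (e.g., $Y$ itself a one-holed torus), where Theorem \ref{T:GraphBound} does not apply directly and one must substitute a direct count using the bounded area and systole of $Y$.
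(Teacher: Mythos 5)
Your proposal is correct and follows essentially the same route as the paper: decompose into simple-separating and non-simple contributions, use condition (2) of $\cN_g$ to confine each non-simple geodesic to a subsurface of one of finitely many topological types with connected complement, count such subsurfaces via Mirzakhani's integration formula and the volume ratios, and count geodesics inside each via Theorem \ref{T:GraphBound} with $L_0=(\kappa/2)\log g$. The obstacle you flag is resolved exactly as you suggest: an embedded pants or one-holed torus in $Y$ with total geodesic boundary less than $(\kappa/2)\log g$ is one in $X$, and its boundary (or the sub-multi-curve bounding a single complementary component, whose complement is then connected) violates condition (1) of $\cN_g$, so the tangle-free hypothesis holds for every subsurface uniformly.
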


A union of closed geodesics is said to fill a hyperbolic surface if every component of the complement is either a contractible polygon or an annular region around a boundary geodesic. Recall the following. 

\begin{lem}\label{L:fill}
Suppose a union $\gamma$ of closed geodesics of total length $\ell$ fills a hyperbolic surface $X$ of area $A$ with boundary of length $B\geq 0$.  Then $B< 2\ell$ and 
$\ell > A/4$.
\end{lem}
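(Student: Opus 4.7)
The plan is to decompose $X\setminus\gamma$ into its connected components and bound each piece separately. Because $\gamma$ fills, every complement component is either a hyperbolic polygon $P_j$ bounded by a piecewise-geodesic loop in $\gamma$, or an annulus $A_i$ with inner boundary a geodesic component $\partial_iX$ (of length $B_i$) and outer boundary a piecewise-geodesic loop in $\gamma$ (of length $L_i$); moreover there is exactly one annulus per boundary component of $X$. Since each arc of $\gamma$ has two sides each lying on exactly one face, summing $\gamma$-boundary contributions gives the accounting identity
\[
2\ell \;=\; \sum_j \operatorname{length}(\partial P_j) \;+\; \sum_i L_i.
\]

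For $B<2\ell$, I would use that the outer boundary of each annulus $A_i$ is a simple closed curve in $X$ freely homotopic, through $A_i$, to the inner geodesic $\partial_iX$, and is a distinct curve. Since a closed geodesic uniquely realizes the minimum length in its free homotopy class, $L_i>B_i$ strictly. Summing and using $\sum_i L_i\leq 2\ell$ gives $B=\sum_i B_i<\sum_i L_i\leq 2\ell$.

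For $\ell>A/4$, I would establish a perimeter--area comparison for each face. Each polygon $P_j$ is simply connected and lifts isometrically to a region in $\mathbb{H}^2$, so the hyperbolic isoperimetric inequality $L^2\geq 4\pi A+A^2$ gives $\operatorname{length}(\partial P_j)>\operatorname{Area}(P_j)$. For each annulus $A_i$, I would lift to the cyclic cover of $X$ corresponding to $\pi_1(A_i)\cong\mathbb{Z}$---a hyperbolic cylinder carrying global Fermi coordinates $(t,\theta)\in\mathbb{R}\times\mathbb{R}/B_i\mathbb{Z}$ with metric $dt^2+\cosh^2(t)\,d\theta^2$---write the outer boundary as a graph $t=f(\theta)$ over the core, and compute
\[
\operatorname{Area}(A_i)=\int_0^{B_i}\sinh f\,d\theta\;\leq\;\int_0^{B_i}\cosh f\,d\theta\;\leq\;L_i.
\]
Combining yields $2\ell>\sum_j\operatorname{Area}(P_j)+\sum_i\operatorname{Area}(A_i)=A$, whence $\ell>A/2$, which is stronger than the claimed $\ell>A/4$.

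The main technical obstacle is handling annuli whose outer boundary is not a graph in Fermi coordinates (e.g., closed curves that oscillate in the $t$-direction after lifting to the cyclic cover). My approach would be to replace the annulus by the region below the upper envelope $f(\theta):=\sup\{t:(t,\theta)\in A_i\}$, which has at least as much area; then a total-variation estimate using the arclength formula in $(t,\theta)$ coordinates shows $L_i$ still dominates the outer length of the envelope region. The comfortable surplus between the bound $\ell>A/2$ produced by this route and the weaker $\ell>A/4$ required leaves enough room to substitute any cruder but simpler comparison for annuli if one prefers to sidestep the envelope analysis.
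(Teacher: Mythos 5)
Your proposal is correct, and its first half ($B<2\ell$ via tightening/homotoping the outer boundary of each annular component onto the corresponding boundary geodesic) is essentially the paper's argument. The area bound is where you diverge: the paper simply invokes the isoperimetric inequality for domains on hyperbolic surfaces (Buser, p.~211), which says that \emph{every} complement component --- polygon or annulus alike --- has total boundary length exceeding its area; summing over components, the total boundary is counted as $2\ell+B$ (each arc of $\gamma$ twice, plus the geodesic boundary of $X$ once), giving $2\ell+B>A$, and combining with $B<2\ell$ yields $\ell>A/4$. You instead prove the sharper per-face estimates $\operatorname{length}(\partial P_j)>\operatorname{Area}(P_j)$ and, crucially, $\operatorname{Area}(A_i)\le L_i$ using only the \emph{outer} boundary of each annulus, via Fermi coordinates in the cyclic cover; this buys the stronger conclusion $\ell>A/2$ at the cost of the envelope/graph analysis you flag (which does go through: for a.e.\ $\theta$ the topmost point of $\overline{A_i}$ over $\theta$ lies on the outer boundary, and $\int_{\partial}\cosh(t)\,|d\theta|\ge\int_0^{B_i}\cosh(f(\theta))\,d\theta>\int_0^{B_i}\sinh(f(\theta))\,d\theta=\operatorname{Area}$ by a coarea argument). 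The paper's route sidesteps all of this because its isoperimetric input already allows the inner geodesic $B_i$ to appear on the perimeter side, which is then absorbed by $B<2\ell$; since only $\ell>A/4$ is needed downstream, the extra precision of your version is not required, but the argument is sound.
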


\begin{proof}
Each boundary geodesic can be obtained by tightening a path of segments of $\gamma$, and each segment can contribute at most twice in this way. So $B<2\ell$. 

A version of the isoperimetric inequality gives that, for each component of the complement of $\gamma$, the length of the boundary of this component is greater than the area \cite[page 211]{Buser}. So $2\ell+B >A.$
\end{proof}

\begin{cor}\label{C:subsurface} 
If $g$ is larger than a constant depending on $D$, then any non-simple geodesic of length at most $D\log(g)$ on a surface in $\cN_g$ is contained in a subsurface with boundary at most $2D\log(g)$ and area at most $2\pi (4D+1)$ and with connected complement. 
\end{cor}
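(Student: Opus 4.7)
The plan is to let $Y_0$ be the subsurface filled by $\gamma$ and then to produce $Y$ by absorbing into $Y_0$ all but one carefully chosen component of $X \setminus Y_0$. Applying Lemma \ref{L:fill} to $\gamma$ gives a compact subsurface $Y_0 \subset X$ with geodesic boundary satisfying
$$\ell(\partial Y_0) < 2\ell(\gamma) \leq 2D\log(g) \quad \text{and} \quad \Area(Y_0) < 4\ell(\gamma) \leq 4D\log(g).$$
The area bound is too weak by a factor of order $\log g$ for the desired conclusion, and $X \setminus Y_0$ need not be connected, so $Y_0$ itself will usually not be the subsurface we want.

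To fix both issues at once, enumerate the components $C_1, \ldots, C_m$ of $X \setminus Y_0$ and, for each $i$, set $Y_i := X \setminus C_i = Y_0 \cup \bigcup_{j \neq i} C_j$. Each $Y_i$ is connected (since $Y_0$ is connected and every $C_j$ abuts $Y_0$), contains $\gamma$, and has $\partial Y_i = \partial C_i \subset \partial Y_0$, so the boundary length bound $\ell(\partial Y_i) < 2D\log(g)$ is automatic. Moreover $\partial Y_i$ is a separating multi-curve whose complement in $X$ has exactly the two components $Y_i$ and $C_i$, so the defining condition (2) of $\cN_g$ forces at least one of $\Area(Y_i)$ and $\Area(C_i)$ to be less than $2\pi(4D+1)$.

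The only real step is to show that for some $i$ it is $Y_i$ rather than $C_i$ that is the small side; this is where I would expect all the work to lie, and it is handled by a counting argument. Suppose for contradiction that $\Area(C_i) < 2\pi(4D+1)$ for every $i$. Summing over $i$ yields $\Area(X) - \Area(Y_0) = \sum_i \Area(C_i) < 2\pi(4D+1)\, m$. To turn this into a genuine contradiction I need an upper bound on $m$, which I obtain from $m \leq n_0 \leq |\chi(Y_0)| + 2 = \Area(Y_0)/(2\pi) + 2 = O(\log g)$, since each component of $X \setminus Y_0$ must contain at least one boundary circle of $Y_0$. Thus the right-hand side is $O(\log g)$, while the left-hand side is $4\pi(g-1) - O(\log g) = \Theta(g)$, which is impossible once $g$ is large enough in terms of $D$. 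Any $i$ for which $\Area(Y_i) < 2\pi(4D+1)$ then yields $Y := Y_i$ with all the required properties.
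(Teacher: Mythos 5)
Your argument is correct and is essentially the paper's own proof: both apply Lemma \ref{L:fill} to the filled subsurface, use the $O(\log g)$ bound on the number of complementary components to locate (by pigeonhole, which you phrase contrapositively) a component of area exceeding $2\pi(4D+1)$, and then invoke condition (2) in the definition of $\cN_g$ to conclude that the complement of that component is the desired small subsurface. The only cosmetic difference is that the paper selects the largest-area component directly rather than arguing by contradiction.
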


\begin{proof}
By Lemma \ref{L:fill}, any non-simple geodesic of length at most $D\log(g)$ fills a subsurface $S$ with boundary of length at most $2D\log(g)$ and area at most $4D\log(g)$. A surface of that area has at most $2D\log(g)/\pi$ boundary circles, so the complement of $S$ can have at most that many components. Let $C$ be the component of the complement of $S$ with largest area, so $C$ has area at least 
$$ \frac{2\pi(2g-2)- 4D\log(g)}{2D\log(g)/\pi}.$$
Assume $g$ is large enough so that this quantity is greater than $2\pi (4D+1)$. 

Let $S'$ be the complement of $C$. Note that $S'$ is connected, because it contains $S$, which is adjacent to every component of the complement of $S$.

By the second condition in the definition of $\cN_g$, we see that $S'$ must have area at most $2\pi (4D+1)$, since its complement is connected and area greater than $2\pi (4D+1)$. 

Since the geodesic is contained in $S'$, this gives the result. 
\end{proof}

\begin{proof}[Proof of Proposition \ref{P:nsnsMain}]
We will use that $F$ has support in $[0, D\log(g)]$. 

Since $D$ is fixed, there are only a finite number of possible topological types for a subsurface of area at most $2\pi (4D+1)$. Thus Corollary \ref{C:subsurface} motivates the following.

\begin{lem}\label{L:g1k}
For fixed $g_1$ and $k$,  if $\e$ is such that $1/\e$ is $g^{o(1)}$, then 
the average over $\cN_g^{>\e}$ of the sum of $F$   evaluated at the lengths of the  geodesics contained in a subsurface of genus $g_1$ with $k$ boundary components and connected complement is at most
$$  O( g^{-1+o(1)} I_{\widetilde{F}}).$$
\end{lem}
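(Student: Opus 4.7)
The strategy is to apply Mirzakhani's integration formula to the multi-curve $\partial S$ and then bound the inner geodesic sum on the subsurface using Theorem \ref{T:GraphBound}. First, I would upper-bound the target quantity by the average of $\sum_{S}\sum_{\gamma\subset S}F(\ell(\gamma))$, where $S$ now ranges over all subsurfaces of type $(g_1,k)$ with connected complement (each relevant $\gamma$ is counted at least once). Mirzakhani's integration formula applied to $\partial S$ converts
\begin{equation*}
\int_{\cM_g}\sum_S\sum_{\gamma\subset S}F(\ell(\gamma))\,dX \;=\; \frac{1}{|\mathrm{Sym}|}\int d\vec{x}\,\vec{x}\cdot V_{g_2,k}(\vec{x})\int_{\cM_{g_1,k}(\vec{x})}\sum_\gamma F(\ell(\gamma))\,dS_1,
\end{equation*}
where $g_2=g-g_1-k+1$ is the genus of the complement.

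Next, I would restrict the $S_1$-integration to the good locus where $S_1$ has systole at least $\e$ and no pants or one-holed tori of total boundary length less than $L_0:=c\log g$ (for an appropriate constant $c$). The systole condition follows automatically from $X\in\cN_g^{>\e}$, while the tangle-free condition is (typically) forced by condition (1) of $\cN_g$, since an interior short pants or one-holed torus in $S_1$ gives a short separating multi-curve in $X$ whose complement has two components, exactly what $\cN_g$ precludes. On this good locus, Theorem \ref{T:GraphBound} with $L_0=c\log g$ and $1/\e=g^{o(1)}$ bounds the number of closed geodesics of length at most $D\log g$ on $S_1$ by $g^{o(1)}$, giving $\sum_{\gamma\subset S_1}F(\ell(\gamma))\leq g^{o(1)}\|F\|_\infty$. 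The exceptional (non-tangle-free) contribution is controlled by a complementary volume bound analogous to Corollary \ref{C:SepVol} applied on the moduli space of $S_1$.

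Finally, I would combine the above with the sinh upper bound (Lemma \ref{L:sinh}) for $V_{g_2,k}(\vec{x})$ and $V_{g_1,k}(\vec{x})$, with the volume ratio $V_{g_1,k}V_{g_2,k}/V_g=O(g^{-(g_1+k-1)})\leq O(g^{-1})$ (valid because $g_1+k-1\geq 1$ for any nontrivial subsurface of positive area), and with the restricted range $\vec{x}\in[\e,2D\log g]^k$ coming from Corollary \ref{C:subsurface} together with the systole bound. The sinh factors from the two volume estimates combine into $\prod_j\sinh^2(x_j/2)/(x_j/2)^2$, matching the weight in $I_F$. Dividing by $\Vol(\cN_g^{>\e})=\Theta(V_g)$ then yields the claimed bound $O(g^{-1+o(1)}I_F)$.

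The main obstacle is two-fold. First, enforcing the tangle-free condition on $S_1$ from the global condition on $X\in\cN_g$ is delicate: a short pants sitting inside $S_1$ may have complement in $X$ with more than two components, so condition (1) of $\cN_g$ does not apply directly, and some additional topological case analysis (or a complementary-volume estimate on the residual configurations) is needed. Second, correctly organizing the $\vec{x}$-integral so that the combination of the $g^{o(1)}\|F\|_\infty$ bound with the sinh factors and volume ratios genuinely produces $O(g^{-1+o(1)}I_F)$, rather than a looser quantity, requires care; in particular, the systole-enforced restriction $\ell\geq\e$ together with $1/\e=g^{o(1)}$ is used to absorb the $\|F\|_\infty$ into $I_F$ at an acceptable cost.
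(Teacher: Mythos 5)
Your toolkit is the right one---unfold over the boundary multi-curve via Mirzakhani's integration formula (equivalently, Corollary \ref{C:SepVol}) and bound the per-subsurface count with Theorem \ref{T:GraphBound}---and your worry about the tangle-free hypothesis is resolvable: if a pants or one-holed torus $P$ of total boundary length less than $L_0=(\kappa/2)\log g$ sits inside $S\subset X$, take one component $C$ of $X\setminus P$ (or $X\setminus P$ itself if connected); then $\partial C\subseteq\partial P$ is a multi-curve of total length less than $L_0$ with exactly two complementary components, which condition (1) of $\cN_g$ forbids, so no residual ``bad locus'' estimate is needed. The genuine gap is in how you organize the main estimate: you decouple the geodesic length from the boundary length of the subsurface. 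Summing over \emph{all} subsurfaces of type $(g_1,k)$ with $\vec x\in[\e,2D\log g]^k$ and bounding the inner sum by $g^{o(1)}\|F\|_\infty$ gives, after the sinh bounds, a quantity of size
$$\|F\|_\infty\, g^{o(1)}\,\frac{V_{g_1,k}V_{g_2,k}}{V_g}\int_{[\e,2D\log g]^k}\prod_j x_j\frac{\sinh^2(x_j/2)}{(x_j/2)^2}\,d\vec x .$$
The $\vec x$-integral is of order $g^{2Dk+o(1)}$, while the volume ratio only supplies $g^{-(2g_1+k-2)}$; for a pair of pants or a one-holed torus ($2g_1+k-2=1$) this is $\|F\|_\infty\,g^{6D-1+o(1)}$ or worse, nowhere near $g^{-1+o(1)}I_F$.

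Two repairs are needed, and together they essentially force the paper's organization. First, you must restrict to subsurfaces \emph{filled} by the geodesic, so that Lemma \ref{L:fill} couples the boundary length to the geodesic length via $|\partial S|<2\ell(\gamma)$; this is what makes the expected number of relevant subsurfaces for geodesics of length at most $L$ come out as $O(e^{L}L^{p}g^{-a})$ with $a=2g_1-2+k\ge 1$ (Corollary \ref{C:SepVol}) instead of $g^{2Dk}$. Second, your plan to ``absorb $\|F\|_\infty$ into $I_F$'' using $\ell\ge\e$ cannot work: for $F$ a bump of height $1$ supported on an interval of width $\delta$ near $\ell=1$ one has $\|F\|_\infty=1$ but $I_F=O(\delta)$, arbitrarily small, so no inequality $\|F\|_\infty\le g^{o(1)}I_F$ is available. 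The weight $F(\ell)$ must stay coupled to the counting throughout: the paper bounds the expected cumulative count of geodesics of length at most $L$ lying in such subsurfaces by $O(e^{L}L^{p}g^{-1+o(1)})=O\bigl(g^{-1+o(1)}L\sinh^2(L/2)/(L/2)^2\bigr)$, the product of the subsurface count and the deterministic $g^{o(1)}$ per-subsurface bound from Theorem \ref{T:GraphBound}, and only then integrates against $F$, which is what produces $I_F$.
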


\begin{proof}
We estimate the average number of such geodesics of length at most $L$, assuming $L\leq D\log(g)$. Each such geodesic is contained in a subsurface of boundary at most $2L$. 

Corollary \ref{C:SepVol} (with $a=2g_1-2+k$ fixed) gives that the average number of such subsurfaces is at most  $$O(e^{(2L)/2} (2L)^{p} g^{-1})$$  for some $p$. Note that, since the volume of $\cN_g^{>\e}$ is certainly at least half that of $\cM_g$, the average over $\cN_g^{>\e}$ is at most twice the average over $\cM_g$.

Theorem \ref{T:GraphBound} gives that the number of geodesics of length at most $L$ in each such subsurface is at most 
$$\left(\frac{C L_0 \log(1/\e)}{\e}\right)^{C L/L_0+3}$$
where $L_0=(\kappa/2)\log(g)$. Given $L\leq D\log(g)$, the exponent $C L/L_0+3$ is $O(1).$  Given the restriction on $\e$, this whole expression is $g^{o(1)}$, with little $o$ function depending on $D$ and $\kappa$. 

So the average over $\cN_g^{>\e}$ of the number of  geodesics of length at most $L\leq D\log(g)$ contained in a subsurface of genus $g_1$ with $k$ boundary components is 
$$O(e^{L} L^{p} g^{-1+o(1)}).$$
Given the bound on $L$ this is 
 $$O(g^{-1+o(1)} L \sinh^2(L/2) / (L/2)^2).$$

Using that the number of such geodesics of length in $[n-1, n]$ is at most the number of such geodesics of length at most $n$, we get that the average in question is bounded by a constant times
 $$\sum_{n=1}^\infty \left(\sup_{[n-1,n]} F \right) \cdot \left(g^{-1+o(1)} n \sinh^2(n/2) / (n/2)^2 \right).$$
The result follows since $\widetilde{F}(\ell) \geq \sup_{[n-1,n]} F$ for any $\ell\in [n-1,n]$, and since $n \sinh^2(n/2) / (n/2)^2$ is at most a constant not depending on $n$ times $\int_{n-1}^n x \sinh^2(x/2) / (x/2)^2 dx$.

%
\end{proof}

\begin{lem}\label{L:SepAvg}
The average over $\cN_g^{>\e}$ of the sum of $F$ over simple separating geodesics is at most 
$$O( g^{o(1)-1} I_{ \widetilde{F} }).$$
\end{lem}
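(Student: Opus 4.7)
The plan is to use condition (2) in the definition of $\cN_g$ to restrict the possible topological types of a simple separating geodesic of length at most $D\log(g)$, and then to apply Mirzakhani's integration formula type by type, controlling each contribution by volume asymptotics. The key geometric input is the following: if $\gamma$ is a simple separating geodesic of length at most $D\log(g)$ on $X \in \cN_g$, then condition (2) forces one of the two components of $X \setminus \gamma$ to have area less than $2\pi(4D+1)$. Since a hyperbolic subsurface with a single geodesic boundary and genus $h \geq 1$ has area $2\pi(2h-1)$, the smaller side must have genus in the fixed finite range $\{1, 2, \ldots, 2D\}$. Hence the sum defining $F_{\mathrm{ssep}}$ on $\cN_g$ is a sum of at most $2D$ topological-type contributions, one for each allowed $h$.

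For each fixed $h \in \{1, \ldots, 2D\}$, I would apply Mirzakhani's integration formula over $\cM_g$ to bound the $h$-th contribution by a constant times
\begin{equation*}
\frac{1}{V_g}\int_0^\infty F(\ell) \ell V_{h,1}(\ell) V_{g-h,1}(\ell) d\ell,
\end{equation*}
using that $\Vol(\cN_g^{>\e}) \geq V_g/2$, a loose consequence of Lemma \ref{L:NgMeasure}. The three length-dependent pieces are then estimated separately: $V_{h,1}(\ell)$ is a polynomial in $\ell$ of fixed degree (namely $6h-4$), so $g^{o(1)}$ on the support of $F$; the sinh upper bound (Lemma \ref{L:sinh}) gives $V_{g-h,1}(\ell) \leq V_{g-h,1} \cdot \sinh(\ell/2)/(\ell/2)$; and the Mirzakhani--Zograf asymptotics in \cite[Theorem 1.8]{MirzakhaniZograf:LargeGenus} give $V_{g-h,1}/V_g = O(g^{-(2h-1)})$ for fixed $h$. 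Combining these and using the trivial pointwise inequality $\sinh(\ell/2)/(\ell/2) \leq \sinh^2(\ell/2)/(\ell/2)^2$, the $h$-th contribution is bounded by $O(g^{-(2h-1)+o(1)} I_F)$.

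Summing over $h \in \{1, \ldots, 2D\}$, the $h=1$ term dominates and produces the required bound $O(g^{-1+o(1)} I_F)$. The main subtlety I anticipate is matching the form of $I_F$: Mirzakhani's formula in the separating case produces only one $\sinh/\ell$-type factor from $V_{g-h,1}(\ell)$, in contrast with the $\sinh^2/\ell^2$ appearing in $I_F$ (which, for simple non-separating geodesics, arose naturally from $V_{g-1,2}(\ell,\ell)$ having two boundaries of length $\ell$). The pointwise inequality above handles this, and the resulting looseness is easily absorbed by the $g^{-(2h-1)}$ decay. Finally, since $h \leq 2D \ll g$, the case $h = g-h$ never arises, so no symmetry-factor complications appear in the integration formula.
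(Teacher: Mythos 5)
Your proof is correct, and it takes a mildly but genuinely different route from the paper's. The paper's proof is a direct application of Corollary \ref{C:SepVol} with $k=1$ (and $a=1$, which holds automatically since each side of a separating geodesic has area at least $2\pi$): the average number of simple separating geodesics of length at most $L$ over all of $\cM_g$ is $O(e^L L^2 g^{-1})$, which is then rewritten as $g^{-1+o(1)} L \sinh^2(L/2)/(L/2)^2$ and integrated against $F$. In particular, the paper does not use the defining conditions of $\cN_g$ here at all, only the crude volume bound $\Vol(\cN_g^{>\e}) \geq V_g/2$; the sum over all splittings $(g_1, g_2)$ is absorbed by Corollary \ref{C:EasierSum}. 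You instead invoke condition (2) of $\cN_g$ to confine the smaller side to genus $h \in \{1, \ldots, 2D\}$ and then run Mirzakhani's integration formula type by type, replacing the summation over all splittings with the single ratio $V_{g-h,1}/V_g = O(g^{-(2h-1)})$ from Mirzakhani--Zograf. Both arguments rest on the same machinery (integration formula, sinh bound, volume asymptotics); yours trades the appendix's summation lemma for a geometric restriction that the paper reserves for the non-simple case (Corollary \ref{C:subsurface}), and your bookkeeping of the $h=1$ dominant term, the polynomial bound on $V_{h,1}(\ell)$, the pointwise inequality $\sinh(x)/x \leq \sinh^2(x)/x^2$, and the absence of a symmetry factor are all correct.
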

\begin{proof}
First consider the average number of separating geodesics of length at most $L$, averaged over $\cN_g^{>\e}$. Since the volume of $\cN_g^{>\e}$ is certainly at least half that of $\cM_g$, this is at most twice the average over $\cM_g$. 

Corollary \ref{C:SepVol} (with $k=1$) gives that the average over $\cM_g$ is $O(e^{L} L^{2} g^{-1})$. Assuming $L\leq D\log(g)$, this is bounded by a constant times $$g^{-1+o(1)} L \sinh^2(L/2) / (L/2)^2.$$
 The result follows as in the previous lemma. 
\end{proof}

The two lemmas prove the proposition, since every geodesics contributing to $F_{\mathrm{all}}-F_{\mathrm{sns}}$ is either simple and separating, and hence controlled by Lemma \ref{L:SepAvg}, or contained in a subsurface of one of finitely many topological types by Corollary \ref{C:subsurface}, and hence controlled by Lemma \ref{L:g1k}. 
\end{proof}

\begin{proof}[Proof of Theorem \ref{T:GeometricMain}]
Set $\e=1/\log(g)$ and define $\cM_g' = \cN_g^{>\e}$. Since $\e\to 0$ as $g\to \infty$, the probability measure of $\cM_g^{<\e}$ goes to zero as $g\to\infty$. 
Lemma \ref{L:NgMeasure} gives that the probability measure of the complement of $\cN_g$ goes to zero as $g\to \infty$. So 
${\Vol(\cM_g')}/{V_g} \to 1.$

Lemma \ref{L:snsMain} and Proposition \ref{P:nsnsMain} give the estimate on the integral of $F_{\mathrm{all}}$. 
\end{proof}

\section{Proof of Theorem \ref{T:SpectralMain}}\label{S:SpectralMain}

In this section we prove Theorem \ref{T:SpectralMain} by averaging Selberg's trace formula \cite{Selberg}. 


\subsection{Statement of the trace formula} For smooth, even, compactly supported functions $f$ on $\mathbb{R},$ define 
$$F_f(x) = x \cdot \sum_{k = 1}^\infty \frac{f(kx)}{2 \sinh(kx/2)}.$$
We continue to use $F_{f,\mathrm{all}}(X)$ to denote the sum of $F_f$ over the lengths of primitive oriented closed geodesics on $X$. 

\begin{thm}\label{T:TF}
Let $f$ be a smooth, even, compactly supported function on $\mathbb{R}.$  Let $X$ be a closed hyperbolic surface of genus $g$ with Laplace eigenvalues $\lambda_n = \frac{1}{4} + r_n^2$. Then

$$\sum_{r_n } \widehat{f}(r_n) 
= (g-1)  \int_{-\infty}^{\infty} \widehat{f}(r) \cdot r \cdot \tanh(\pi r) \; dr + F_{f,\mathrm{all}}(X).$$
\end{thm}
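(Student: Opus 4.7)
The plan is to derive Theorem \ref{T:TF} by invoking Selberg's trace formula for a compact hyperbolic surface of genus $g$, and matching the identity and hyperbolic conjugacy-class contributions to the two terms on the right-hand side.

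First I would build the automorphic kernel associated to $f$. Let $h = \widehat{f}$; since $f$ is smooth, even, and compactly supported, $h$ is even and entire of exponential type. Via the Harish-Chandra/Selberg transform there is a unique $K$-biinvariant smooth function $k$ on $\bH$ whose eigenvalue on a Laplace eigenfunction with spectral parameter $r$ is $h(r)$, and $k$ inherits compact radial support from $f$. Writing $X = \Gamma\backslash\bH$, the automorphic kernel $K(z,w) = \sum_{\gamma\in\Gamma} k(d(z,\gamma w))$ is a finite sum at each point (by discreteness of $\Gamma$ and compact support of $k$), descends to a smooth kernel on $X\times X$, and the associated operator $T$ on $L^2(X)$ is trace class with spectrum $\{h(r_n)\}$ on the Laplace eigenbasis.

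Second, compute $\mathrm{Tr}(T)$ in two ways. Spectrally, $\mathrm{Tr}(T) = \sum_n h(r_n) = \sum_n \widehat{f}(r_n)$, which is the left-hand side. Geometrically, $\mathrm{Tr}(T) = \int_{\cF} \sum_{\gamma\in\Gamma} k(d(z,\gamma z))\, dA(z)$ for a fundamental domain $\cF$, and I would split the $\gamma$-sum into $\Gamma$-conjugacy classes: the identity class and the hyperbolic classes (no parabolic or elliptic classes, since $X$ is closed and $\Gamma$ is torsion-free and cocompact).

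Third, evaluate each type of contribution. The identity gives $\Area(X)\cdot k(0)$, and the classical Plancherel identity for the Selberg transform rewrites this as $\frac{\Area(X)}{4\pi}\int_{\bR} h(r)\, r\tanh(\pi r)\, dr$; together with Gauss--Bonnet $\Area(X) = 4\pi(g-1)$, this matches $(g-1)\int \widehat{f}(r)\, r\tanh(\pi r)\, dr$. For a hyperbolic class, I would unfold the integral over $\cF$ to the quotient of $\bH$ by the cyclic centralizer $\langle \gamma_0\rangle$ of a primitive representative and evaluate in Fermi coordinates around the axis; after summing over powers $k\geq 1$ of $\gamma_0$, a standard computation yields the contribution $\sum_{k\geq 1} \frac{\ell}{2\sinh(k\ell/2)}\, f(k\ell) = F_f(\ell)$, where $\ell$ is the translation length. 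Since primitive hyperbolic conjugacy classes in $\Gamma$ biject with oriented primitive closed geodesics on $X$, summing over all of them gives $F_{f,\mathrm{all}}(X)$.

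The only obstacle is bookkeeping: tracking normalization conventions in the Fourier/Selberg transform pair and the measure on centralizer quotients, and verifying the inversion formula that turns $k(0)$ into $\int h(r)\, r\tanh(\pi r)\, dr/(4\pi)$. There are no convergence issues, since compact support of $f$ makes the geodesic sum finite on any fixed $X$ and compact support of $k$ makes the $\Gamma$-sum finite at each point.
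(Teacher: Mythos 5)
Your proposal is correct: it is the classical derivation of the Selberg trace formula for a closed hyperbolic surface, which is exactly what the paper relies on here (the theorem is stated with a citation to Selberg rather than proved). The only delicate point is the normalization, and it checks out: with the paper's convention $\widehat{f}(r)=\int f(x)e^{-irx}\,dx$, Fourier inversion and evenness give that the function $g$ appearing in the standard hyperbolic-term formula $\sum_{k\ge1}\frac{\ell}{2\sinh(k\ell/2)}g(k\ell)$ is $f$ itself, the identity term $\frac{\Area(X)}{4\pi}\int \widehat{f}(r)\,r\tanh(\pi r)\,dr$ becomes $(g-1)\int \widehat{f}(r)\,r\tanh(\pi r)\,dr$ by Gauss--Bonnet, and the sum over primitive hyperbolic conjugacy classes matches the paper's sum over \emph{oriented} primitive closed geodesics, yielding $F_{f,\mathrm{all}}(X)$ as claimed.
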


The left hand side is called the \emph{spectral side}, and the right hand side the \emph{geometric side}. The first summand on the geometric side is called the \emph{identity contribution}. The imaginary parameters $r_n,$ corresponding to eigenvalues strictly less than $\frac{1}{4},$ are called \emph{exceptional}.

\medskip

Since $f$ is even, its Fourier transform equals
$$\widehat{f}(r) = \int_{\mathbb{R}} f(x) e^{-ir \cdot x} dx = 2\int_{0}^\infty f(x) \cosh(-ir x) dx.$$

\subsection{A preliminary observation.}
We start by noting that the integral $I_{F_f}$ is close to the $\lambda_0 = 0$ contribution to the trace formula. 

\begin{lem}\label{L:IFf}
$|I_{F_f}- \widehat{f}( i/2 )| \leq 4 \|f\|_{1}.$
\end{lem}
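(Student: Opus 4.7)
The plan is to expand $F_f$ via its defining series, interchange summation with the outer integral in $I_{F_f}$ (justified by the compact support of $f$), and apply the substitution $u=k\ell$ in the $k$-th summand. This yields
\begin{equation*}
I_{F_f} \;=\; \sum_{k=1}^\infty \frac{2}{k} \int_0^\infty f(u)\,\frac{\sinh(u/(2k))^{2}}{\sinh(u/2)}\,du.
\end{equation*}
The $k=1$ summand simplifies, via $\sinh(u/2)^{2}/\sinh(u/2) = \sinh(u/2)$, to $\int_0^\infty f(u)(e^{u/2}-e^{-u/2})\,du$. Since $f$ is even, $\widehat{f}(i/2) = \int_{\mathbb{R}} f(x) e^{x/2}\,dx = \int_0^\infty f(u)(e^{u/2}+e^{-u/2})\,du$, so the $k=1$ summand differs from $\widehat{f}(i/2)$ by exactly $2\int_0^\infty f(u) e^{-u/2}\,du$, whose absolute value is at most $\|f\|_1$.

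It then remains to show that the $k\ge 2$ tail contributes at most $3\|f\|_1$, which by Fubini reduces to the uniform pointwise bound
\begin{equation*}
\Psi(u) \;:=\; \sum_{k \ge 2}\frac{2}{k}\cdot\frac{\sinh(u/(2k))^{2}}{\sinh(u/2)} \;\le\; 6 \qquad\text{for all } u>0.
\end{equation*}
The $k=2$ term equals $\tanh(u/4)/2 \le 1/2$ via the duplication identity $\sinh(u/2)=2\sinh(u/4)\cosh(u/4)$. For $k\ge 3$ the strategy is to split the sum at $k\asymp u$: on the head $3\le k<u/2$, the bounds $\sinh(x)\le e^{x}/2$ and $\sinh(u/2)\ge e^{u/2}/4$ (valid for $u\ge 2$) give $\sinh(u/(2k))^{2}/\sinh(u/2)\le e^{-u(k-2)/(2k)}\le e^{-u/6}$, contributing an overall $O(e^{-u/6}\log u)$; on the tail $k\ge u/2$, the estimates $\sinh(u/(2k))\le (u/(2k))\cosh(1)$ and $\sinh(u/2)\ge\min(u/2,\, e^{u/2}/4)$ yield terms of size $O(u/k^{3})$ or $O(u^{2}/(k^{3}e^{u/2}))$, summable via $\sum_{k\ge u/2} 1/k^{3}=O(1/u^{2})$.

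The main obstacle is exactly this uniform bound on $\Psi$. Neither estimate is individually summable over all $k\ge 2$: the exponent $(k-2)/(2k)$ approaches $1/2$ only as $k\to\infty$, so $\sum (1/k) e^{-u(k-2)/(2k)}$ diverges because of its harmonic tail, while the quadratic-in-$u/k$ estimate requires $u/(2k)$ to be small and so cannot handle small $k$ when $u$ is large. Splitting at $k\asymp u$ applies each bound precisely in the regime where it is sharp, and combining the two regimes yields $\Psi\le 6$, from which $|I_{F_f}-\widehat{f}(i/2)|\le \|f\|_1+3\|f\|_1 = 4\|f\|_1$ follows at once.
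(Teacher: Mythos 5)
Your proposal is correct and follows the same skeleton as the paper's proof: isolate the $k=1$ summand, observe that it equals $\widehat{f}(i/2)-2\int_0^\infty f(\ell)e^{-\ell/2}\,d\ell$ (cost at most $\|f\|_1$ by evenness), and bound the $k\ge 2$ tail separately. Where you diverge is in the tail estimate, and here your route is much harder than it needs to be. The paper bounds the tail before any change of variables by a single convexity inequality: since $\sinh$ is convex with $\sinh(0)=0$, one has $\sinh(k\ell/2)\ge \tfrac{k}{2}\sinh(\ell)=k\sinh(\ell/2)\cosh(\ell/2)$ for $k\ge 2$, hence
$$\frac{\sinh(\ell/2)^2}{\sinh(k\ell/2)}\le \frac{\tanh(\ell/2)}{k}\le\frac1k$$
uniformly in $\ell$; substituting $u=k\ell$ then gives a tail of at most $2\sum_{k\ge2}k^{-2}\int_0^\infty|f|<2\|f\|_1$. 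In your notation this says $\Psi(u)\le 2\sum_{k\ge2}k^{-2}<2$ for every $u>0$, with no splitting at $k\asymp u$. So the "main obstacle" you identify is an artifact of the particular inequalities you chose ($\sinh x\le e^x/2$ and $\sinh x\le x\cosh(1)$), not of the problem: the inequality $\sinh(tx)\le t\sinh(x)$ for $t\in[0,1]$ interpolates between your two regimes and is sharp enough everywhere. Your two-regime argument does go through — I checked that the head contributes $O(e^{-u/6}\log u)\lesssim 0.4$ and the tail at most about $1.8$, so $\Psi\le 6$ holds — but as written it is a sketch with unquantified $O(\cdot)$'s, and to certify the explicit constant $6$ (and hence the constant $4$ in the lemma) you would need to pin those constants down. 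The convexity route removes that bookkeeping entirely.
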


\begin{proof}
Directly from the definitions, we get  
\begin{eqnarray*}
I_{F_f} &=& 2 \sum_{k = 1}^\infty \int_0^\infty  \frac{f(k\ell) \sinh(\ell/2)^2}{\sinh(k\ell/2)} d \ell
\\&=&  2\int_0^\infty f(\ell) \sinh(\ell/2) d \ell + 
2 \sum_{k = 2}^\infty \int_0^\infty  \frac{f(k\ell) \sinh(\ell/2)^2}{\sinh(k\ell/2)} d \ell.
\\&=& \widehat{f} \left( i/2 \right) - 2\int_0^\infty f(\ell) e^{-\ell/2} d\ell+ 
2 \sum_{k = 2}^\infty \int_0^\infty  \frac{f(k\ell) \sinh(\ell/2)^2}{\sinh(k\ell/2)} d \ell.
\end{eqnarray*}
Since $e^{-\ell/2}\leq 1$, the middle term is at most $2\|f\|_1$. 

When $k\geq 2$, convexity of sinh gives
\begin{align*}
\frac{\sinh(\ell/2)^2}{\sinh(k\ell/2)} &\leq \frac{\sinh(\ell/2)^2}{\frac{k}2 \sinh(\ell)} 
\leq \frac1{k}. 
\end{align*}
%
%
%

Applying the change of variables $u=k\ell$ and noting that $2\sum_{k=2}^\infty \frac1{k^2}< 2$, 
this gives that the third term is at most $2\|f\|_1$.
%
%
%
\end{proof}

\begin{cor}\label{C:Cancel} 
Under the assumptions of Theorem \ref{T:SpectralMain}, if $f$ is even and has support in $[-D\log(g) ,D\log(g)]$,  we have the one-sided bound
$$ \int_{\cM_g}\left(F_{f,\mathrm{all}}(X)- \widehat{f}\left( i/2 \right)\right) d \mu(X) \leq 4 \|f\|_{1}+  O(g^{\kappa-1})I_{\widetilde{F}_f}.$$
\end{cor}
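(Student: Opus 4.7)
The plan is to combine Lemma \ref{L:IFf} with hypothesis (2) of Theorem \ref{T:SpectralMain}. The first step is to verify that $F_f$ is supported in $[0, D\log g]$: for any $x > D\log g$ and any $k \geq 1$, we have $kx > D\log g$, so $f(kx) = 0$ and hence $F_f(x) = 0$. Provided $F_f$ is non-negative (which will hold for the test functions used in the applications of this corollary, e.g.\ when $f \geq 0$), hypothesis (2) of Theorem \ref{T:SpectralMain} applies with $F = F_f$.

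Applying the hypothesis then gives
$$\int_{\cM_g} F_{f,\mathrm{all}}(X) \, d\mu(X) \;\leq\; \left(1 + O(g^{-1+\kappa})\right) I_{F_f}.$$
Lemma \ref{L:IFf} bounds $I_{F_f}$ above by $\widehat{f}(i/2) + 4\|f\|_1$, so the right-hand side is at most
$$\widehat{f}(i/2) \;+\; 4\|f\|_1 \;+\; O(g^{-1+\kappa})\,\widehat{f}(i/2) \;+\; O(g^{-1+\kappa})\,\|f\|_1.$$
Subtracting $\widehat{f}(i/2)$ from both sides and absorbing the $O(g^{-1+\kappa})\|f\|_1$ cross-term into an extra $\|f\|_1$ (valid for $g$ large enough, since $\kappa < 1$) yields the stated bound, with the constant $5 = 4 + 1$ accounting for the absorption.

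I do not expect any serious obstacle: the proof is essentially a two-line composition of the two preceding results. The only bookkeeping care needed is to track the two distinct scales in the error, $\|f\|_1$ and $\widehat{f}(i/2)$, and ensure that the multiplicative factor $(1 + O(g^{-1+\kappa}))$ applied to $4\|f\|_1$ does not obstruct its absorption into the $5\|f\|_1$ term stated in the conclusion. Since $g^{-1+\kappa} \to 0$, this is immediate and the constant $5$ is comfortably non-sharp.
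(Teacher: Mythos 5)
Your proof is correct and is exactly the argument the paper intends (the paper states the corollary without proof, as an immediate consequence of Lemma \ref{L:IFf} and hypothesis (2) of Theorem \ref{T:SpectralMain}). You also rightly flag that hypothesis (2) requires $F_f \geq 0$, which the corollary's statement omits but which holds for the translated test functions $f_L$ to which it is applied.
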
 


This cancellation on average is the essential point in our arguments below. 

\subsection{Picking test functions}


Fix   a smooth, compactly supported, even test function $f$ on $\mathbb{R}$ satisfying
\begin{itemize}
\item
$f$ is non-negative and supported on $[-1,1]$ and
\item
$\widehat{f} \geq 0$ on $\mathbb{R} \cup i \mathbb{R}$ with $\widehat{f} > 0$ on $i \mathbb{R}.$
\end{itemize}
For example, $f$ could be be the convolution square of a smooth, even, non-negative function $g$ supported on $[-1/2,1/2]$ with $g(0) > 0.$

Define $$f_L = \frac{1}{2}( f(x + L) + f(x - L) ).$$  The Fourier transform intertwines translation and multiplication by characters, so $\widehat{f_L}(r) = \widehat{f}(r) \cdot \cos(Lr)$ and $\widehat{f_L}(it) = \widehat{f}(it) \cdot \cosh(Lt).$ We will assume that $L\leq D\log(g)-1$. 

Our goal is to give an upper bound for
$$p = \mu \left( \left\{ X \in \cM_g: \lambda_1(X) \leq \frac{1}{4} - b^2 \right\} \right).$$
We start by relating this to the contribution of exceptional eigenvalues. 

\begin{lem}\label{L:AVGlower}
The $\mu$-average of
$$\sum_{r_n \in (0 \cdot i, \frac{1}{2} \cdot i) } \widehat{f_L}(r_n(X)) $$  is at least $p \cdot \cosh(Lb) \cdot m,$
where $m = \min_{t \in [0,1/2]} \widehat{f}(it)$.
\end{lem}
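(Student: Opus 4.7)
The plan is to observe that for each surface $X$ with $\lambda_1(X) \leq \frac{1}{4} - b^2$, the sum in the statement is already bounded below by $\cosh(Lb) \cdot m$, and then integrate.

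First I would unpack what an exceptional eigenvalue with $r_n \in (0\cdot i, \frac{1}{2}\cdot i)$ means. Writing $r_n = it_n$ with $t_n \in (0, 1/2)$, the corresponding Laplace eigenvalue is $\lambda_n = \frac{1}{4} - t_n^2$. Using the intertwining identity $\widehat{f_L}(it) = \widehat{f}(it) \cdot \cosh(Lt)$ stated above, the $r_n$-summand equals $\widehat{f}(it_n)\cosh(Lt_n)$, which is non-negative for every exceptional $r_n$ by the hypothesis $\widehat{f} \geq 0$ on $i\mathbb{R}$.

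Next I would isolate the contribution of $r_1$. If $\lambda_1(X) \leq \frac{1}{4} - b^2$, then $t_1(X) := \sqrt{1/4 - \lambda_1(X)} \geq b$, and moreover $t_1 \in [b, 1/2]$, so two monotonicity facts give
\[
\widehat{f}(it_1) \geq \min_{t \in [0,1/2]} \widehat{f}(it) = m, \qquad \cosh(Lt_1) \geq \cosh(Lb),
\]
the second because $\cosh$ is increasing on $[0,\infty)$. Dropping every other (non-negative) exceptional summand, we obtain
\[
\sum_{r_n \in (0\cdot i,\, \frac12\cdot i)} \widehat{f_L}(r_n(X)) \;\geq\; \widehat{f}(it_1)\cosh(Lt_1) \;\geq\; m\cdot \cosh(Lb)
\]
for every $X$ in the set whose $\mu$-measure is $p$.

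Finally I would integrate the pointwise bound: the integrand is non-negative everywhere (again because $\widehat{f} \geq 0$ on $i\mathbb{R}$), so restricting the domain of integration to the set $\{\lambda_1 \leq 1/4 - b^2\}$ only decreases the integral, and on this set the integrand is at least $m\cdot\cosh(Lb)$. This yields the claimed lower bound $p \cdot \cosh(Lb) \cdot m$. There is no serious obstacle here; the key structural inputs (the intertwining formula for $\widehat{f_L}$ and the sign/positivity assumptions on $\widehat{f}$) were arranged precisely so that this termwise estimate goes through.
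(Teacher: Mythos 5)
Your proof is correct and is exactly the argument the paper intends (the paper's own proof is the one-line remark that the lemma ``follows immediately from monotonicity of $\cosh$ and the non-negativity property of $\widehat{f}$''); you have simply written out the termwise estimate $\widehat{f_L}(it_1)=\widehat{f}(it_1)\cosh(Lt_1)\geq m\cosh(Lb)$, dropped the other non-negative exceptional terms, and integrated over the set of measure $p$.
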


\begin{proof}
This follows immediately from monotonicity of $\cosh$ and the non-negativity property of $\widehat{f}$.
\end{proof}

In the remainder of the proof, we use the trace formula to give an upper bound for this average, which will translate into an upper bound for $p$. 

\begin{lem}\label{L:AVGupper}
The $\mu$-average of
$$\sum_{r_n \in (0 \cdot i, \frac{1}{2} \cdot i) } \widehat{f_L}(r_n(X)) $$ 
is less than or equal to 
$O \left( e^{L/2} \cdot g^{\kappa - 1} + g^{1 + o(1)} \right).$ 
\end{lem}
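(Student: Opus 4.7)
The plan is to apply the Selberg trace formula (Theorem~\ref{T:TF}) to $f_L$ pointwise, integrate against $\mu$, and isolate the exceptional contribution using Corollary~\ref{C:Cancel}. Since the eigenvalue $\lambda_0 = 0$ contributes $r_0 = i/2$ with multiplicity one, and every other spectral parameter is either real or strictly inside $(0\cdot i, i/2)$, I would first rewrite the target sum as
\begin{equation*}
\sum_{r_n \in (0\cdot i,\, i/2)} \widehat{f_L}(r_n) \;=\; \sum_{r_n} \widehat{f_L}(r_n) \;-\; \widehat{f_L}(i/2) \;-\; \sum_{r_n \in \mathbb{R}} \widehat{f_L}(r_n).
\end{equation*}
After integrating Theorem~\ref{T:TF} against $\mu$, the first term on the right becomes the identity contribution $(g-1)\int_\mathbb{R} \widehat{f_L}(r)\, r\tanh(\pi r)\, dr$ plus $\int F_{f_L,\mathrm{all}}\, d\mu$. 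The identity contribution is $O(g)$, because $|\cos(Lr)| \leq 1$ and $\widehat{f}(r)\, r\tanh(\pi r)$ is a fixed integrable function. Since $f_L$ is supported in $[-L-1, L+1] \subseteq [-D\log g, D\log g]$, Corollary~\ref{C:Cancel} gives
\begin{equation*}
\int F_{f_L,\mathrm{all}}\, d\mu \;\leq\; \widehat{f_L}(i/2) \;+\; 5\|f\|_{1} \;+\; O(g^{\kappa-1})\,\widehat{f_L}(i/2).
\end{equation*}
The main $\widehat{f_L}(i/2)$ term cancels the $-\widehat{f_L}(i/2)$ coming from $\lambda_0 = 0$; what remains is the error $O(g^{\kappa-1})\,\widehat{f_L}(i/2) = O(e^{L/2} g^{\kappa-1})$, using $\widehat{f_L}(i/2) = \widehat{f}(i/2)\cosh(L/2)$.

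It remains to produce a lower bound for $\int \sum_{r_n \in \mathbb{R}} \widehat{f_L}(r_n)\, d\mu$. The key observation is that for real $r$,
\begin{equation*}
|\widehat{f_L}(r)| \;=\; \widehat{f}(r)\,|\cos(Lr)| \;\leq\; \widehat{f}(r),
\end{equation*}
since $\widehat{f}\geq 0$ on $\mathbb{R}$. I would then apply Theorem~\ref{T:TF} a second time, but to $f$ itself. Its identity contribution is $O(g)$, and Corollary~\ref{C:Cancel} applied to $f$ (with $\widehat{f}(i/2)$ a constant independent of $g$) gives $\int F_{f,\mathrm{all}}\, d\mu = O(1)$. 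Hence the $\mu$-average of $\sum_{r_n} \widehat{f}(r_n)$ is $O(g)$. Because $\widehat{f}$ is non-negative on both $\mathbb{R}$ and $i[0,1/2]$, every term in that sum is non-negative, so the $\mu$-average of $\sum_{r_n \in \mathbb{R}} \widehat{f}(r_n)$ is also $O(g)$. Combined with the displayed inequality, this yields $\int \sum_{r_n \in \mathbb{R}} \widehat{f_L}(r_n)\, d\mu \geq -O(g)$. Alternatively, the pointwise local Weyl law from Appendix~\ref{localweyllawappendix} bounds $\sum_{r_n(X)\in\mathbb{R}}|\widehat{f_L}(r_n(X))|$ directly by $O(g)$ on any $X$ in the support of $\mu$, which would reach the same conclusion without a second use of Corollary~\ref{C:Cancel}. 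Assembling the three estimates produces $O(g) + O(e^{L/2} g^{\kappa-1}) + O(g) = O(e^{L/2} g^{\kappa-1} + g^{1+o(1)})$.

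The principal conceptual obstacle is the sign of $\widehat{f_L}$ on $\mathbb{R}$: because of the $\cos(Lr)$ factor it oscillates and need not be non-negative, so one cannot discard the real eigenvalue contribution purely by positivity (as one can for $\widehat{f}$). The trivial bound $|\cos(Lr)|\leq 1$ absorbs this oscillation at the harmless cost of one extra factor of $g$, which is precisely the $g^{1+o(1)}$ term in the stated bound and does not interfere with the main $e^{L/2} g^{\kappa-1}$ error that carries the essential gain from Corollary~\ref{C:Cancel}.
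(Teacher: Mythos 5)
Your proposal is correct, and it follows the paper's skeleton exactly in two of its three components: the same rearrangement of the trace formula isolating the exceptional sum, the same $O(g)$ bound on the identity contribution via $|\cos(Lr)|\leq 1$, and the same use of Corollary \ref{C:Cancel} to extract the $O(g^{\kappa-1})\widehat{f_L}(i/2)=O(e^{L/2}g^{\kappa-1})$ error after cancelling $\widehat{f_L}(i/2)$ against the $\lambda_0=0$ term. Where you genuinely diverge is the real-spectrum term. The paper controls $\sum_{r_n\ \mathrm{real}}|\widehat{f_L}(r_n)|$ pointwise on the support of $\mu$ via the local Weyl law of Appendix \ref{localweyllawappendix}, which costs a factor $\logp(1/\mathrm{sys}(X))$ and is why hypothesis (1) of Theorem \ref{T:SpectralMain} (support in the $e^{-g^{o(1)}}$-thick part) is invoked, yielding $O(g^{1+o(1)})$. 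Your primary route instead dominates $|\widehat{f_L}(r)|$ by $\widehat{f}(r)$ on $\mathbb{R}$, applies the trace formula to $f$ itself, and uses the positivity of $\widehat{f}$ on $\mathbb{R}\cup i\mathbb{R}$ together with hypothesis (2) (or Corollary \ref{C:Cancel}) to bound the $\mu$-average of the full spectral sum, hence of the real part of it, by $O(g)$. This is a legitimate and in fact slightly sharper argument: it works on average rather than pointwise, avoids the systole dependence entirely, and does not use the thick-support hypothesis. The trade-off is that it only controls the real-spectrum term after integration against $\mu$, whereas the paper's Weyl-law bound is pointwise; for the purposes of this lemma, which is a statement about the $\mu$-average, either suffices. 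Your fallback option (the local Weyl law) is precisely the paper's argument.
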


\begin{proof}
The trace formula allows us to write $\sum_{r_n \in (0 \cdot i, \frac{1}{2} \cdot i) } \widehat{f_L}(r_n)$ as
$$ (g-1) \int_{-\infty}^{\infty} \widehat{f_L}(r) \cdot r \cdot \tanh(\pi r) \; dr - \sum_{r_n \text{ real}} \widehat{f_L}(r_n) 
+ \left( F_{f_L,\mathrm{all}}(X) - \widehat{f_L}(i/2) \right). $$
We will show that the first term is small, the second term is small for all $X$ in the support of $\mu,$ and that the $\mu$-average of the third term is small. 

To start, note that since $\widehat{f_L}(r) = \widehat{f}(r)  \cos(Lr),$ the first term is bounded by 
$$(g-1)  \int_{-\infty}^{\infty} |\widehat{f}(r)|  r \; dr = O(  g ).$$ 

In  Corollary \ref{C:AppendixC}, for fixed $h$, we show using a standard local Weyl law argument that, for all $X\in \cM_g$, 
\begin{align*}
 \sum_{r_n \text{ real}}|h(r_n)| &= O \left(g \cdot \logp\left(\frac{1}{\mathrm{sys}(X)}\right) \right),
\end{align*}
where $\logp(x) = \max \{0, \log(x) \}+1$.  For $X$ in the support of $\mu$, keeping in mind that $|\widehat{f_L}| \leq |\widehat{f}|$, this gives the bound 
$$ \sum_{r_n \text{ real}} | \widehat{f_L}(r_n(X)) | =  O\left(g^{1 + o(1)} \right)$$
for the second term above. 

 A straightforward estimate shows that $\widetilde{F}_{f_{L}}(\ell)$ is $O((\ell+1) e^{-L/2})$, and hence $I_{\widetilde{F}_{f_L}}$ is $O( e^{L/2})$. Thus 
 Corollary \ref{C:Cancel} shows that the $\mu$-average of the third term is at most 
\begin{align*}
    4 \|f_L\|_{1}+  O(g^{\kappa-1}) I_{\widetilde{F}_{f_L}} &= 4 \|f_L\|_{1}+  O(g^{\kappa-1}) O( e^{L/2}) \\  
    &= O(1  + g^{\kappa-1}  e^{\frac{L}2}). 
\end{align*} 

Combining the bounds for the three terms gives the lemma. 
\end{proof}

We can now conclude the proof. 

\begin{proof}[Proof of Theorem \ref{T:SpectralMain}]
Combining the upper bound from Lemma \ref{L:AVGupper} with the lower bound from Lemma \ref{L:AVGlower} yields 
\begin{equation*}
p = O \left( e^{(\frac{1}{2} - b) L} \cdot g^{\kappa - 1} +  g^{1 + o(1)} \cdot e^{-Lb} \right).
\end{equation*}

The two summands here are equal when $L$ equals $L_0 = (4 - 2\kappa + o(1)) \log g.$  For this particular choice of $L_0$ we get

\begin{equation*}
p = O \left(  g^{ 1 - 4b \left(1 - \frac{\kappa}{2} \right) + o(1)} \right),
\end{equation*}
proving the theorem. 
\end{proof}

\appendix
\section{Volume polynomials}\label{A:VolPoly}

Recall the standard notation for volume polynomials 
$$V_{g,n}(2 \mathbf{L}) = \sum_{|\mathbf{d}|\leq 3g-3+n} [\tau_{d_1} \cdots \tau_{d_n}]_{g,n} \frac{L_1^{2d_1}}{(2d_1+1)!} \cdots  \frac{L_n^{2d_n}}{(2d_n+1)!},$$
where $\mathbf{L}=(L_1, \ldots, L_n)$, $\mathbf{d}=(d_1, \ldots, d_n)$ and  $|\mathbf{d}| = \sum_{j=1}^n d_j$,

The sole purpose of this appendix is to check the following statement, which will only be used in the proof of Lemma \ref{L:sinh}.

\begin{thm}[Mirzakhani, Mirzakhani-Zograf]\label{T:Coeffs}
When $n=o(\sqrt{g})$,  
$$0\leq 1- \frac{[\tau_{d_1} \cdots \tau_{d_n}]_{g,n}}{V_{g,n}} \leq \frac{C n |\mathbf{d}|^2}{2g-3+n}.$$
\end{thm}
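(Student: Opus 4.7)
The plan is to derive the claim from quantitative refinements of Mirzakhani's identities for intersection numbers on $\overline{\cM}_{g,n}$, as developed by Mirzakhani-Zograf.

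For the left inequality, observe that $V_{g,n} = [\tau_0 \cdots \tau_0]_{g,n}$ by specializing $V_{g,n}(2\mathbf{L})$ at $\mathbf{L} = 0$, so the claim $[\tau_{d_1}\cdots\tau_{d_n}]_{g,n} \leq V_{g,n}$ amounts to the monotonicity statement that replacing a $\tau_0$ by $\tau_d$ with $d \geq 1$ weakly decreases the coefficient. This follows from the expansion of $V_{g,n}(\mathbf{L})$ as a polynomial with non-negative coefficients in the $L_j^2$ together with the non-negativity of the relevant intersection pairings, as established in Mirzakhani's original work.

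For the right inequality I would proceed by induction on $|\mathbf{d}|$. The base case $|\mathbf{d}| = 0$ is trivial. The inductive step rests on a shifting estimate of the form
$$\frac{[\tau_{d_1}\cdots\tau_{d_j}\cdots\tau_{d_n}]_{g,n}}{[\tau_{d_1}\cdots\tau_{d_j-1}\cdots\tau_{d_n}]_{g,n}} \geq 1 - \frac{C(n + d_j)}{2g-3+n},$$
which I would obtain by combining Mirzakhani's string and dilaton equations -- which rewrite $[\tau_{d_1}\cdots\tau_{d_n}]_{g,n}$ in terms of coefficients on $\overline{\cM}_{g,n+1}$ with a $\tau_0$ or $\tau_1$ inserted -- together with the Mirzakhani-Zograf ratio asymptotic $V_{g,n+1}/V_{g,n} = 8\pi^2 g \cdot (1 + O((n+1)/g))$. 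Reducing each coordinate $d_j$ down to $0$ in turn and telescoping the resulting product then gives
$$\frac{[\tau_{d_1}\cdots\tau_{d_n}]_{g,n}}{V_{g,n}} \geq \prod_{j=1}^n\prod_{k=1}^{d_j}\left(1 - \frac{C(n+k)}{2g-3+n}\right) \geq 1 - \frac{C(n |\mathbf{d}| + |\mathbf{d}|^2/2)}{2g-3+n},$$
which is $\geq 1 - C' n|\mathbf{d}|^2/(2g-3+n)$ as desired.

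The main obstacle will be to establish the shifting estimate with an error bound that remains uniform in the regime $n = o(\sqrt{g})$. Mirzakhani-Zograf originally phrased their asymptotics for fixed $n$, whereas here we need to track how all implicit constants depend on $n$ as $n$ grows slowly with $g$. Once the uniform version of their ratio asymptotic $V_{g,n+1}/V_{g,n}$ is verified in this range -- which is precisely the sort of uniformity check that this appendix is meant to supply -- the remaining combinatorial bookkeeping in the induction is straightforward.
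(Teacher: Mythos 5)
There is a genuine gap on both sides of the inequality. For the left inequality, non-negativity of the coefficients of $V_{g,n}(2\mathbf{L})$ does not imply $[\tau_{d_1}\cdots\tau_{d_n}]_{g,n}\leq [\tau_0^n]_{g,n}=V_{g,n}$: a polynomial can have all coefficients non-negative while some normalized coefficient exceeds the constant term, so the monotonicity "replacing $\tau_0$ by $\tau_d$ weakly decreases the coefficient" is exactly what must be proved, not a formal consequence of positivity. The paper proves it from Mirzakhani's recursion $[\tau_{d_1}\cdots\tau_{d_n}]_{g,n}=\sum_j\cA^j_{\mathbf{d}}+\cB_{\mathbf{d}}+\cC_{\mathbf{d}}$: decreasing $d_1$ only enlarges the ranges of summation and, by monotonicity of the sequence $a_w$, only increases each summand.

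For the right inequality, your shifting estimate is the entire content of the theorem, and the tools you cite do not deliver it. The string and dilaton equations relate coefficients on $\overline{\cM}_{g,n+1}$ (with a $\tau_0$ or $\tau_1$ inserted) to \emph{signed sums} over all indices $j$ and all $i<d_j$ of coefficients on $\overline{\cM}_{g,n}$; they do not isolate the single decrement $d_j\mapsto d_j-1$ at fixed $(g,n)$, and combining them with the ratio $V_{g,n+1}/V_{g,n}$ gives no control on an individual term of such a sum. The paper's route is again through the recursion: it bounds the one-shot difference $[\tau_0\tau_{d_2}\cdots\tau_{d_n}]_{g,n}-[\tau_{d_1}\tau_{d_2}\cdots\tau_{d_n}]_{g,n}$ by splitting the error into (i) the extra terms appearing in the enlarged summation range, of which there are $O(d_1)$ for each $j$, each bounded by $V_{g,n-1}$, and (ii) the increments $a_{k-d_j+1}-a_{k-d_1-d_j+1}\leq c_0 2^{-2(k-d_1-d_j+1)}$, whose geometric decay makes their total contribution $O(d_1)V_{g,n-1}$ as well; the uniform ratio bound $V_{g,n-1}/V_{g,n}=O(1/(2g-3+n))$ and the smallness of the separating $\cC$ terms then give an error $O(|\mathbf{d}|^2 V_{g,n}/(2g-3+n))$ per coordinate, and iterating over the $n$ coordinates produces the factor $n$. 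If you want to salvage your telescoping scheme, the per-unit decrement estimate would have to be extracted from this same recursion (as in Mirzakhani--Zograf's own difference estimates), not from the string and dilaton equations; and you should be aware that the uniformity issue is not merely about $V_{g,n+1}/V_{g,n}$ but about the product bounds $V_{g_1,p}V_{g-g_1,n-1-p}/V_{g,n}$ controlling the $\cC$ terms, which is where the hypothesis $n=o(\sqrt g)$ is actually used.
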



More formally, this means that for any function $f(g) \in o(\sqrt{g})$ there is a constant $C$, so that this bound holds when $n\leq f(g)$. 
In fact the lower bound of $0$, which is easier, holds for all $g$ and $n$. 


For fixed $n$, this theorem is on \cite[page 286]{Mirzakhani:Growth} and also appears as \cite[Lemma 2.1]{MirzakhaniPetri:Lengths}, and the general statement is closely related to many statements in \cite{MirzakhaniZograf:LargeGenus}  such as \cite[Remark 3.2, Lemma 5.1]{MirzakhaniZograf:LargeGenus}. We  include a proof sketch since we could not find a precise statement to cite with the above level of uniformity.

\subsection{Recursion.} \label{recursion} Recall  the recursion  
$$[\tau_{d_1} \cdots \tau_{d_n}]_{g,n}  = \left(\sum_{j=2}^n \cA_\mathbf{d}^j \right)+ \cB_\mathbf{d} + \cC_\mathbf{d}$$
where we set  $d_0 = 3g-3+n - |\mathbf{d}|$ and

\begin{eqnarray*}
\cA_\mathbf{d}^j &=& 8 \sum_{d_1+d_j-1\leq k \leq d_0+d_1+d_j-1}(2d_j+1) 
 a_{k-d_1-d_j+1} [\tau_{k} \prod_{i \neq 1, j} \tau_{d_i}]_{g,n-1}
 \\[10pt]
 \cB_\mathbf{d} 
 &=& 16 \sum_{{d_1-2 \leq k_1+k_2\leq d_0+d_1 -2}} 
 a_{k_1+k_2-d_1+2}[\tau_{k_1}\tau_{k_2} \prod_{i\neq 1} \tau_{d_i}]_{g-1,n+1}
 \\[10pt]
  \cC_\mathbf{d} 
 &=& 16 \sum_{\substack{g_1+g_2=g\\ I \sqcup J = \{2, \ldots, n\} \\ d_1-2\leq k_1+k_2\leq d_0+d_1 -2 }} 
 a_{k_1+k_2-d_1+2}  [\tau_{k_1} \prod_{i\in I} \tau_{d_i}]_{g_1, |I|+1}  
 [\tau_{k_2} \prod_{i\in J} \tau_{d_i}]_{g_2, |J|+1}
 \end{eqnarray*}
 
\noindent This appears for example in \cite[Section 3.1]{Mirzakhani:Growth} and \cite[Equation 2.13]{MirzakhaniZograf:LargeGenus}. Here $$a_w = (1-2^{1-2w}) \zeta(2w),$$ which is a positive sequence that increases monotonically to the limit 1 for which there is a constant $c_0>0$ such that $w<w'$ implies $a_{w'} - a_w< c_0/2^{2w}$ \cite[Lemma 3.1]{Mirzakhani:Growth}. 
 
 \subsection{Upper bound.} We start with the easier inequality, which is a warm up for the harder one.
 
 \begin{lem}\label{L:UpperB}
 If $d_i' \leq d_i$ for all $i$ then 
 $$ [\tau_{d_1} \cdots \tau_{d_n}]_{g,n}  \leq [\tau_{d_1'} \cdots \tau_{d_n'}]_{g,n}.$$
 \end{lem}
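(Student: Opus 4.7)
The plan is to apply the Mirzakhani recursion directly. By composing shifts and using permutation symmetry of $[\tau_{d_1}\cdots\tau_{d_n}]_{g,n}$ in its subscripts, it suffices to prove the inequality in the case $\mathbf{d}' = (d_1 - 1, d_2, \ldots, d_n)$ with $d_1 \geq 1$. I would proceed by induction on the complexity $3g-3+n$, handling the small base cases (in particular $(g,n) = (1,1)$, where the recursion formally involves unstable terms) by direct calculation from the known values of $V_{1,1}$ and $V_{0,3}$.

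For the inductive step, write both sides of the desired inequality using the recursion as $\sum_j \cA^j + \cB + \cC$. The central observation is that on the right of the recursion the index $1$ is always contracted: it is replaced in $\cA^j$ by the summation variable $k$, and in $\cB, \cC$ absorbed into $k_1, k_2$. Thus $d_1$ never reappears as a $\tau$-subscript on the right; it enters only through the summation bounds (via $d_0 = 3g-3+n - |\mathbf{d}|$) and through the subscripts of the coefficients $a_w$. Replacing $d_1$ by $d_1 - 1$ raises $d_0$ by one, which (i) extends each summation range by one extra term at the lower end, and (ii) shifts every overlapping $a_w$ subscript upward by one.

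Since $a_w$ is a positive, strictly increasing sequence, each $a$-coefficient on the $\mathbf{d}'$ side is weakly larger than its counterpart on the $\mathbf{d}$ side, and the extra low-end terms carry the positive coefficient $a_0 = 1/2$. The intersection numbers $[\tau_k \prod_{i \neq 1,j}\tau_{d_i}]_{g,n-1}$, $[\tau_{k_1}\tau_{k_2}\prod_{i\neq 1}\tau_{d_i}]_{g-1,n+1}$, and the products $[\tau_{k_1}\prod_{i\in I}\tau_{d_i}]_{g_1,|I|+1}[\tau_{k_2}\prod_{i\in J}\tau_{d_i}]_{g_2,|J|+1}$ appearing on the right are identical in both recursions and are nonnegative (a fact verified by a parallel induction using the same recursion, each term being a sum of products of nonnegative quantities with positive coefficients). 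Combining, each of $\cA^j, \cB, \cC$ weakly increases under the shift $d_1 \to d_1 - 1$, and summing gives the lemma. I expect no deep obstacle here, only careful indexing to verify that the summation-range shifts and the $a_w$-index shifts line up cleanly — consistent with this being described as a warm-up for the harder reverse inequality.
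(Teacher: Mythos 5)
Your proposal is correct and follows essentially the same route as the paper: reduce by symmetry to changing a single index, then observe that in the recursion the index $1$ is always contracted, so lowering $d_1$ only enlarges the summation ranges (the upper bounds are fixed since $d_0+d_1$ is) and raises the subscripts of the monotonically increasing coefficients $a_w$, while the bracket terms are unchanged and nonnegative. The induction on $3g-3+n$ in your write-up is superfluous scaffolding — the term-by-term comparison needs only nonnegativity of the intersection numbers, not the inductive hypothesis — but this does not affect correctness.
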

 
 Since $V_{g,n}=[\tau_{0}^n]_{g,n}$, this immediately gives the following. 
 
 \begin{cor}\label{C:UpperB}
 $[\tau_{d_1} \cdots \tau_{d_n}]_{g,n} \leq V_{g,n}$.
 \end{cor}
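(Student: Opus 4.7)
The plan is to reduce to a single-step monotonicity and induct on the complexity $3g-3+n$, using the Mirzakhani--Zograf recursion displayed above. By symmetry and iteration, it suffices to show that if $\mathbf{d}^+$ denotes $\mathbf{d}$ with $d_1$ replaced by $d_1+1$ (all other $d_i$ unchanged), then $[\tau_{\mathbf{d}^+}]_{g,n} \leq [\tau_{\mathbf{d}}]_{g,n}$. A parallel (easier) induction using only non-negativity of $a_w$ establishes non-negativity of every coefficient $[\tau_{\mathbf{d}}]_{g,n}$; this will be used freely. The base cases $(g,n) \in \{(0,3), (1,1)\}$ are verified directly.

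For the inductive step I examine each of $\cA_{\mathbf{d}}^j$, $\cB_{\mathbf{d}}$, $\cC_{\mathbf{d}}$ term-by-term against its analogue for $\mathbf{d}^+$, noting that $d_0^+ = d_0 - 1$ since $|\mathbf{d}^+| = |\mathbf{d}| + 1$. The key numerology is that in all three sums the upper summation limit has the form $d_0 + d_1 + \text{const}$, hence is unchanged under $\mathbf{d} \mapsto \mathbf{d}^+$, while the lower limit has the form $d_1 + \text{const}$, hence shifts upward by $1$. Thus the range for $\mathbf{d}^+$ is the range for $\mathbf{d}$ with its smallest summand removed. Since the sub-factors $[\tau_k \prod_{i \neq 1, j} \tau_{d_i}]_{g, n-1}$ in $\cA^j$, $[\tau_{k_1} \tau_{k_2} \prod_{i \neq 1} \tau_{d_i}]_{g-1, n+1}$ in $\cB$, and the two factors in $\cC$ are all independent of $d_1$ and non-negative, the removed summand is a non-negative contribution that $\mathbf{d}$ enjoys and $\mathbf{d}^+$ does not.

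On the overlapping range the only thing that changes is the subscript of $a_w$: in $\cA^j$ it is $k - d_1 - d_j + 1$ and in $\cB, \cC$ it is $k_1 + k_2 - d_1 + 2$, so in each case increasing $d_1$ by $1$ decreases the subscript by $1$. Monotonicity of the sequence $a_w$ (Mirzakhani, Lemma 3.1, cited in the text immediately below the recursion) then implies that each overlapping term of $\cA_{\mathbf{d}^+}^j$, $\cB_{\mathbf{d}^+}$, $\cC_{\mathbf{d}^+}$ is at most the corresponding term for $\mathbf{d}$. Summing the three contributions yields $[\tau_{\mathbf{d}^+}]_{g,n} \leq [\tau_{\mathbf{d}}]_{g,n}$, closing the induction.

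The main obstacle is primarily combinatorial: carefully matching the summation ranges and verifying that the index shift in $\cA$, $\cB$, and $\cC$ has the same favorable sign. Once that bookkeeping is settled, monotonicity of $a_w$ and non-negativity of sub-coefficients are the only substantive inputs, and the three cases proceed in parallel.
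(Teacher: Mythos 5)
Your proposal is correct and is essentially the paper's own argument: the paper proves the corollary via Lemma \ref{L:UpperB}, whose proof reduces by symmetry to lowering a single index $d_1$ and then compares $\cA^j_{\mathbf{d}}, \cB_{\mathbf{d}}, \cC_{\mathbf{d}}$ termwise using exactly your two observations — the summation region only shrinks when $d_1$ increases (since $d_0+d_1$ is fixed), and the subscript of $a_w$ drops, so monotonicity of $a_w$ and non-negativity of the sub-coefficients finish it. (Your outer induction on $3g-3+n$ is harmless but unnecessary, since the sub-coefficients appearing in the recursion do not depend on $d_1$; only their non-negativity is needed.)
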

 
 \begin{proof}[Proof of Lemma]
 By symmetry, it suffices to prove this when $d_i'=d_i$ for $i>1$, and this is what we will do. 
So suppose $\mathbf{d'} = (d_1', d_2, \ldots, d_n)$ and $d_0' =3g-3+n - |\mathbf{d}'|$.

Since $d_1'\leq d_1$ and $d_0+d_1 = d_0'+d_1'$, we see that in the expressions above for $\cA_\mathbf{d}^j ,  \cB_\mathbf{d},   \cC_\mathbf{d} $,  the region of summation does not decrease when we pass from $\mathbf{d}$ to $\mathbf{d'}$. 
Monotonicity of the sequence $a_w$ thus gives $\cA_\mathbf{d}^j \leq \cA_\mathbf{d'}^j$, $\cB_\mathbf{d} \leq \cB_\mathbf{d'}$ and $\cC_\mathbf{d} \leq \cC_\mathbf{d'}$.
\end{proof}

\subsection{Error term.} We now give a lower bound for $[\tau_{d_1} \cdots \tau_{d_n}]_{g,n}$, by analyzing the error in the argument above. That error has two types, namely that from changing bounds in sums and that from changing values of $a_w$. 

\begin{lem}[Incremental $\cA$-term bound] \label{incrementalAbound}
Suppose that $n=o(\sqrt{g})$. Then there exists a constant $C>0$ such that, if
 $\mathbf{d'} = (0, d_2, \ldots, d_n)$, then 
\begin{equation*}
 \cA_\mathbf{d'} - \cA_\mathbf{d} \leq   \frac{ C \left( (n-1) (d_1 + 2) + 4 |\mathbf{d}| +  \sum_{j = 2}^n d_1 d_j  \right)  V_{g,n}}{2g-3+n},
\end{equation*}
where $\cA_\mathbf{d}:=\sum_{j=2}^n \cA_\mathbf{d}^j$. 
\end{lem}

\begin{proof}
We compute 
\begin{eqnarray*}
&&(\cA_\mathbf{d'}^j - \cA_\mathbf{d}^j)/8 
\\&=& 
 \sum_{d_1+d_j-1\leq k \leq d_0+d_1+d_j-1}(2d_j+1) 
(a_{k-d_j+1}- a_{k-d_1-d_j+1}) [\tau_{k} \prod_{i \neq 1, j} \tau_{d_i}]_{g,n-1} 
\\&& + \sum_{d_j-1\leq k \leq d_1+d_j-2}(2d_j+1) 
 a_{k-d_1'-d_j+1} [\tau_{k} \prod_{i \neq 1, j} \tau_{d_i}]_{g,n-1}
 \\&\leq& 
  \sum_{d_1+d_j-1\leq k \leq d_0+d_1+d_j-1}(2d_j+1) 
c_0 2^{-2(k-d_1-d_j+1)} V_{g,n-1}
\\&& + \sum_{d_j-1\leq k \leq d_1+d_j-2}(2d_j+1) 
 V_{g,n-1} 
  \\&\leq& C (d_1 +2) (2d_j+1)  V_{g,n-1}, 
\end{eqnarray*} 
 for some universal constants $C$.  

 Using \cite[inequality (3.9)]{Mirzakhani:Growth} to uniformly bound the ratio $\frac{V_{g,n-1}}{V_{g,n}}$ of volumes and summing over $j$,  we conclude 
$$\frac{\cA_\mathbf{d'} - \cA_\mathbf{d}}{V_{g,n}} \leq   \frac{ C \left( (n-1) (d_1 + 2) + 4 |\mathbf{d}| +  \sum_{j = 2}^n d_1 d_j  \right) }{2g-3+n} $$ 
for a different universal constant $C>0.$
\end{proof}

\begin{cor}[Total $\cA$-term bound] \label{totalAbound}
Let $n = o(\sqrt{g}).$  There is a constant $C' > 0$ for which
\begin{equation*}  
\frac{\cA_\mathbf{0}-\cA_\mathbf{d}}{V_{g,n}} \leq \frac{ C' \left( n |\mathbf{d}| + |\mathbf{d}|^2 \right) }{2g-3+n}.
 \end{equation*}
\end{cor}

Here we use the notation $\mathbf{0}=[0,\ldots,0]$.

\begin{proof}
Let $\mathbf{d}_i$ be gotten from $\mathbf{d}$ by setting the first $i$ non-zero coordinates of $\mathbf{d}$ to 0.  Let $k$ be the number of non-zero coordinates of $\mathbf{d}.$  Using that $k \leq |\mathbf{d}|,$ it follows from Lemma \ref{incrementalAbound} that
\begin{align*}
\frac{\cA_\mathbf{0}-\cA_{\mathbf{d}}}{V_{g,n}} &= \sum_{i = 0}^{k - 1} \frac{\cA_{\mathbf{d}_{k - i}} - \cA_{\mathbf{d}_{k - (i+1)}}}{V_{g,n}} \\
&\leq \frac{ C \left( (n-1) |\mathbf{d}| + (n-1) \cdot 2 \cdot k + 4 |\mathbf{d}| \cdot k + |\mathbf{d}|^2 \right) }{2g-3+n} \\
&\leq \frac{ C' \left( n |\mathbf{d}| + |\mathbf{d}|^2 \right) }{2g-3+n}
\end{align*}
for $C' = 5C.$
\end{proof}

We now turn to the $\cB$ term, continuing to use $\mathbf{d'} = (0, d_2, \ldots, d_n)$. 

\begin{lem}[Incremental $\mathcal{B}$-term bound] \label{incrementalBbound}
Let $n = o(\sqrt{g}).$  There is an absolute constant $C'' > 0$ for which
\begin{equation*}  
\frac{\cB_\mathbf{d'} - \cB_\mathbf{d}}{V_{g,n}}  \leq \frac{ C'' d_1^2}{2g - 3 + n}.
 \end{equation*}
\end{lem}

\begin{proof}
We compute 
\begin{align*}
&(\cB_\mathbf{d'} - \cB_\mathbf{d})/16 \\
&= \sum_{d_1 - 2 \leq k_1 + k_2 \leq d_0+ d_1 - 2}(a_{k_1 + k_2 + 2} - a_{k_1 + k_2 - d_1 + 2}) [\tau_{k_1} \tau_{k_2} \prod_{i \neq 1} \tau_{d_i}]_{g-1,n+1} \\
&+ \sum_{0 \leq k_1 + k_2 < d_1 - 2} a_{k_1 + k_2 + 2}  [\tau_{k_1} \tau_{k_2} \prod_{i \neq 1} \tau_{d_i}]_{g-1,n+1}  \\
&\leq \sum_{d_1 - 2 \leq k_1 + k_2 \leq d_0+ d_1 - 2} c_0 2^{-2(k_1 + k_2 - d_1 + 2)} V_{g-1,n+1} \\
&+  \sum_{0 \leq k_1 + k_2 < d_1 - 2} 1 \cdot V_{g-1,n+1}\\
&\leq C' d_1^2 \cdot V_{g-1,n+1}.
\end{align*}
From here we conclude as in the proof of Lemma \ref{incrementalAbound}.
\end{proof}

\begin{cor}[Total $\cB$-term bound] \label{totalBbound}
Let $n = o(\sqrt{g}).$  There is a constant $C'' > 0$ for which
\begin{equation*}  
\frac{\cB_\mathbf{0}-\cB_\mathbf{d}}{V_{g,n}} \leq \frac{C'' |\mathbf{d}|^2}{2g - 3 + n}.
 \end{equation*}
\end{cor}

For the $\cC$ term, we will use the following. 

\begin{lem} \label{binomialinequality}
Suppose $N=o(\sqrt{G})$. Then 
$$\sum_{s=0}^{\frac{G}2} \sum_{k=0}^{\min\{s+2, N\}} {N \choose k} {G \choose s}^{-1}$$
is $O((N+1)^2)$. 
\end{lem}


\begin{proof}
We first bound the contribution of $s\geq N$ as follows. 
$$\sum_{s=N}^{\frac{G}2} \sum_{k=0}^{\min\{s+2, N\}} {N \choose k} {G \choose s}^{-1} \leq 2^N \sum_{s=N}^{\frac{G}2} {G \choose s}^{-1}.$$
Using that the sum is at most the number of terms times the largest term, together with the inequality ${G \choose N}\geq (G/N)^N$, we get that this is at most 
$$2^N G {G \choose N}^{-1} \leq 2^N G \left(\frac{N}{G}\right)^{N} \leq  G \left(\frac{2N}{G}\right)^{N},$$
which goes to $0$ as $G\to\infty$ as long as $N\geq 2$. 

For the cases when $N< 2$, to see that the contribution from $s\geq N$ is bounded it suffices to note that $\sum_{s=0}^{\frac{G}2} {G \choose s}^{-1}$ is bounded. 

We now bound the contribution of smaller $s$ as follows. 
\begin{eqnarray*}
\sum_{s=0}^{N-1} \sum_{k=0}^{\min\{s+2, N\}} {N \choose k} {G \choose s}^{-1} 
&\leq& \sum_{s=0}^{N-1} (N+1)^{s+2} {G \choose s}^{-1} 
\\&\leq& 
\sum_{s=0}^{N-1} (N+1)^{s+2} \left(\frac{s}{G}\right)^s
\\&\leq& 
(N+1)^2\sum_{s=0}^{N-1}  \left(\frac{(N+1) s}{G}\right)^s
\end{eqnarray*}
This is $O((N+1)^2)$ since $N$ is $o(\sqrt{G})$ and $s\leq N-1$. 
\end{proof}

\begin{lem}[Incremental $\cC$-term bound] \label{incrementalCbound}
Let $n = o(\sqrt{g}).$  There is a constant $C''' > 0$ such that
\begin{equation*}  
\frac{\cC_\mathbf{d'} - \cC_\mathbf{d}}{V_{g,n}} \leq \frac{C''' \cdot d_1^2 \cdot n^2}{(2g - 3 + n)^2}.
 \end{equation*}
\end{lem}

\begin{proof}
We can and will assume $d_1>0$, since if $d_1=0$ then $\cC_\mathbf{d'} = \cC_\mathbf{d}$. Define 
$$
    S_{g,n} = \sum_{\substack{g_1+g_2=g\\ I \sqcup J = \{2, \ldots, n\}   }} 
   V_{g_1, |I|+1} \cdot V_{g_2, |J| + 1} \\
  = \sum_{\substack{g_1+g_2=g \\ 0 \leq k \leq n-1 }}
 \binom{n-1}{k} V_{g_1, k + 1} \cdot V_{g_2, n-1 - k + 1}.
$$
We compute
\begin{eqnarray*} 
(\cC_\mathbf{d'} - \cC_\mathbf{d})/16 
  &\leq&  \left( \sum_{d_1-2\leq k_1+k_2\leq d_0+d_1 -2} (a_{k_1+k_2+2} - a_{k_1+k_2-d_1+2}) \right) \cdot S_{g,n} \label{overlap} \\
 &&+  \left( \sum_{-2\leq k_1+k_2 < d_1 -2} a_{k_1+k_2+2} \right) \cdot S_{g,n}. \label{nonoverlap}
\end{eqnarray*}
Since $0 \leq a_{k_1+k_2+2} - a_{k_1+k_2-d_1+2} \leq \frac{c_0}{2^{2\cdot(k_1 + k_2 - d_1 + 2)}}$ for some  constant $c_0$,
\begin{align*}
 \sum_{d_1-2\leq k_1+k_2\leq d_0+d_1 -2} (a_{k_1+k_2+2} - a_{k_1+k_2-d_1+2}) &\leq \sum_{d_1-2\leq k_1+k_2} \frac{c_0}{2^{2(k_1 + k_2 - d_1 + 2)}}  
\end{align*}
is at most a constant times $d_1$. Since $a_w \leq 1$ for all $w,$ 
$$ \sum_{-2\leq k_1+k_2 < d_1 -2} a_{k_1+k_2+2}\leq \frac{1}{2}d_1(d_1 + 1),$$ and we can conclude that 
$\frac{\cC_\mathbf{d'} - \cC_\mathbf{d}}{S_{g,n}} $
is at most a constant times $d_1^2$.

It remains to bound $\frac{S_{g,n}}{V_{g,n}},$ towards which we begin by noting that
\begin{align*}
S_{g,n} 
 &= \sum_{R_0}\binom{n-1}{k} V_{g_1, k + 1} \cdot V_{g_2, n-1 - k + 1},
\end{align*}
where $R_0$ denotes the lattice points $(g_1,k)$ in the rectangular region 
$$R := \{(g_1, k):  0 \leq g_1 \leq g \text{ and } 0\leq k \leq n-1 \}.$$  
The expression being summed is invariant under $$(g_1, k) \leftrightarrow (g-g_1, n-1-k),$$ corresponding to point reflection $\iota$ about the center $(\frac{g}{2}, \frac{n-1}{2})$ of $R.$  Thus,
\begin{equation*}
S_{g,n} \leq 2 \cdot \sum_{R_0'} \binom{n-1}{k} V_{g_1, k + 1} \cdot V_{g_2, n-1 - k + 1},
\end{equation*}
where $R_0'$ is any ``half" of $R_0$, i.e. any subset $S \subset R_0$ for which $S \cup \iota S = R_0.$

Applying \eqref{E:asym} and \eqref{E:upper}, we get
\begin{align*}
 \frac{V_{g_1, k + 1} \cdot V_{g_2, n-1 - k + 1}}{V_{g,n}} 
&\leq C' \cdot \frac{ (2g_1 -3 + k + 1)!  \cdot  (2g_2-3+n-k)! }{  (2g-3+n)! } \\
&= C' \cdot \frac{1}{(2g - 3 + n)(2g - 4 + n)} \cdot \binom{2g - 5 + n}{2g_1 - 3 + k + 1}^{-1}
\end{align*}
for some $C'>0$. (See Lemma \ref{L:ProductBound} for a similar  estimate.)
Thus,
\begin{equation*}
\frac{S_{g,n}}{V_{g,n}} \leq C'' \cdot \frac{1}{(2g - 3 + n)(2g - 4 + n)} \cdot 2 \cdot \sum_{R_0'} 
 \binom{n-1}{k} \binom{2g - 5 + n}{2g_1 - 2 + k}^{-1},
 \end{equation*}
for any half $R_0'$ of $R_0.$
Let $s = 2g_1 - 2 + k.$  We choose the half $R_0'$ consisting of all points of $R_0$ satisfying 
$s \leq 2 \cdot \frac{g}{2} - 2 + \frac{n-1}{2} = \frac{1}{2}(2g - 5 + n).$  Parametrizing $R_0'$ in $(s,k)$ coordinates, this amounts to all lattice points $(s,k)$ satisfying
\begin{itemize}
\item 
$s \in [0, \frac{1}{2} \cdot ( 2g-5+n )],$

\item 
$0 \leq k \leq n-1$ and $k \leq s + 2$ (corresponding to $g_1 \geq 0$), i.e. $k \in [0, \min\{n-1, s+2\}].$

\item 
$k \equiv s \mod 2.$
\end{itemize}

 It follows that
\begin{equation*}\label{binomialsumsub} 
\frac{S_{g,n}}{V_{g,n}} \leq   \frac{2C''}{(2g - 3 + n)(2g - 4 + n)}   \sum_{s = 0}^{\frac{1}{2}(2g - 5 + n)} \sum_{k = 0}^{\min\{n-1,s+2\}}
 \binom{n-1}{k} \binom{2g - 5 + n}{s}^{-1},
 \end{equation*}
and hence Lemma \ref{binomialinequality} gives
 \begin{equation*}
\frac{S_{g,n}}{V_{g,n}} \leq C''' \cdot \frac{1}{(2g - 3 + n)(2g - 4 + n)}  \cdot n^2,
 \end{equation*}
concluding the proof.
\end{proof}

\begin{cor}[Total $\cC$-term bound] \label{totalCbound}
Let $n = o(\sqrt{g}).$  There is a constant $C''' > 0$ for which 
\begin{equation*}  
\frac{\cC_\mathbf{0}-\cC_\mathbf{d}}{V_{g,n}} \leq  \frac{C''' \cdot |\mathbf{d}|^2 \cdot n^2}{(2g - 3 + n)^2}.
 \end{equation*}
\end{cor}


\begin{proof}[Proof of Theorem \ref{T:Coeffs}.]
Given Corollary \ref{C:UpperB},  it suffices to prove the upper bound. Note that 
\begin{eqnarray*}
1- \frac{[\tau_{d_1} \cdots \tau_{d_n}]_{g,n}}{V_{g,n}}
&=& \frac{\cA_0-\cA_\mathbf{d}}{V_{g,n}} + \frac{\cB_0-\cB_\mathbf{d}}{V_{g,n}} + \frac{\cC_0-\cC_\mathbf{d}}{V_{g,n}}
\\&\leq&   \frac{  C' n |\mathbf{d}| + |\mathbf{d}|^2  }{2g-3+n}
+ \frac{C'' |\mathbf{d}|^2}{2g - 3 + n} 
+ \frac{C''' \cdot |\mathbf{d}|^2 \cdot n^2}{(2g - 3 + n)^2}
\end{eqnarray*}
where the inequality follows from Corollaries \ref{totalAbound}, \ref{totalBbound}, and \ref{totalCbound}. Keeping in mind that $n\geq 1$ and $n=o(\sqrt{g})$, the result follows. 
\end{proof}

\section{Volume bounds}\label{A:Vols}

\subsection{The sinh estimate and exponential upper bound.} The following is a version of \cite[Proposition 3.1]{MirzakhaniPetri:Lengths}.

\begin{lem}[Mirzakhani-Petri]\label{L:sinh}
In general,
$$ \frac{V_{g,n}(2L_1, \ldots, 2L_n)}{V_{g,n}} \leq \prod_{i=1}^n \frac{\sinh(L_i)}{L_i} \leq \exp\left(\sum L_i \right).$$
If we assume $n=g^{o(1)}$ and $\sum L_i \leq g^{o(1)}$ then 
$$\left( 1-g^{-1+o(1)} \right) \prod_{i=1}^n \frac{\sinh(L_i)}{L_i} \leq \frac{V_{g,n}(2L_1, \ldots, 2L_n)}{V_{g,n}}.$$
\end{lem}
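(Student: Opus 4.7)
The plan is to work entirely in the monomial expansion of $V_{g,n}(2\mathbf{L})$. Dividing by $V_{g,n} = [\tau_0^n]_{g,n}$ gives
\[
\frac{V_{g,n}(2L_1,\ldots,2L_n)}{V_{g,n}} = \sum_{|\mathbf{d}| \leq 3g-3+n} \frac{[\tau_{d_1}\cdots \tau_{d_n}]_{g,n}}{V_{g,n}} \prod_{i=1}^n \frac{L_i^{2d_i}}{(2d_i+1)!},
\]
while $\prod_i \sinh(L_i)/L_i$ is the corresponding unrestricted sum with every coefficient ratio set to $1$. Both inequalities in the lemma then reduce to term-by-term comparisons of these series.

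For the upper bound, Lemma \ref{L:UpperB} gives $[\tau_\mathbf{d}]_{g,n}/V_{g,n} \leq 1$, so dropping the truncation identifies the partial sum with the full sinh product. The inequality $\prod_i \sinh(L_i)/L_i \leq e^{\sum L_i}$ then reduces to the one-variable bound $\sinh(L) \leq L e^L$ for $L \geq 0$, which follows from comparing the Taylor series term by term: the coefficient of $L^{m+1}$ in $\sinh L$ is $1/(m+1)!$ for even $m$ and $0$ otherwise, in either case at most $1/m!$, the coefficient in $Le^L$.

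For the lower bound, I would substitute Theorem \ref{T:Coeffs} to replace each ratio by $1 - Cn|\mathbf{d}|^2/(2g-3+n)$ and then bound the deficit of this quantity from $\prod_i \sinh(L_i)/L_i$ as a sum of two pieces: a truncation error $\sum_{|\mathbf{d}|>3g-3+n}\prod_i L_i^{2d_i}/(2d_i+1)!$ and a coefficient error of the form $\frac{Cn}{2g-3+n} \sum_{\mathbf{d}} |\mathbf{d}|^2\prod_i L_i^{2d_i}/(2d_i+1)!$. The truncation error is harmless: any surviving $\mathbf{d}$ has some $d_i > (3g-3+n)/n \geq g^{1-o(1)}$, and under $L_i \leq g^{o(1)}$ crude Stirling shows each single-variable tail of $L_i^{2d}/(2d+1)!$ beyond index $g^{1-o(1)}$ is super-exponentially small, far below $g^{-1+o(1)}\prod_i \sinh(L_i)/L_i$.

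The coefficient error is the only delicate step. The cleanest approach is probabilistic: introduce independent $D_1,\ldots,D_n$ with $\Pr[D_i = d] = (L_i^{2d}/(2d+1)!)\cdot(L_i/\sinh L_i)$, so that
\[
\sum_{\mathbf{d}} |\mathbf{d}|^2\prod_i \frac{L_i^{2d_i}}{(2d_i+1)!} = \mathbb{E}[|\mathbf{D}|^2] \cdot \prod_i \frac{\sinh(L_i)}{L_i}.
\]
Using the identities $\sum_{d\geq 0}L^{2d}/(2d)! = \cosh L$ and $\sum_{d\geq 1}L^{2d}/(2d-1)! = L\sinh L$, direct manipulation yields $\mathbb{E}[D_i]\leq L_i/2$ and $\mathbb{E}[D_i^2]\leq (L_i^2+1)/4$, and independence then gives $\mathbb{E}[|\mathbf{D}|^2] \leq (\sum_i L_i)^2/4 + n/4 = g^{o(1)}$ under the hypotheses. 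Multiplying by the prefactor $Cn/(2g-3+n) = g^{-1+o(1)}$ yields the claimed error $g^{-1+o(1)}\prod_i \sinh(L_i)/L_i$ and completes the lower bound. The only substantive step is this variance-type estimate; everything else is bookkeeping.
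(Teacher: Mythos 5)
Your proposal is correct and follows essentially the same route as the paper: term-by-term comparison of the monomial expansion with the $\sinh$ product, the upper bound from $[\tau_{\mathbf{d}}]_{g,n}/V_{g,n}\le 1$, and the lower bound by splitting the deficit into a (super-exponentially small) truncation error plus the coefficient error $\frac{Cn}{2g-3+n}\sum_{\mathbf{d}}|\mathbf{d}|^2\prod_i L_i^{2d_i}/(2d_i+1)!$ controlled by Theorem \ref{T:Coeffs}. Your probabilistic moment computation ($\mathbb{E}[D_i]\le L_i/2$, $\mathbb{E}[D_i^2]\le (L_i^2+1)/4$) is just a cleaner, fully explicit packaging of the paper's comparison of the generating functions $P$ and $Q$ with $L_k^2\prod_i\sinh(L_i)/L_i$ and $L_iL_j\prod_i\sinh(L_i)/L_i$, and both moment bounds do check out.
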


\begin{proof}
The upper bound follows immediately from the inequality 
$$\frac{[\tau_{d_1} \cdots \tau_{d_n}]_{g,n}}{V_{g,n}} \leq 1.$$

For the lower bound, note:
\begin{enumerate}
\item
$\prod_{i=1}^n \frac{\sinh(L_i)}{L_i}$ is extremely close to $\sum_{|\mathbf{d}| \leq 3g-3 + n} \frac{L_1^{2d_1}}{(2d_1+1)!} \cdots \frac{L_n^{2d_n}}{(2d_n+1)!}$; the latter is a high truncation of the power series for the former because $n=g^{o(1)}$.

\item
By Theorem \ref{T:Coeffs}, the difference
$\sum_{|\mathbf{d}| \leq 3g-3 + n} \frac{L_1^{2d_1}}{(2d_1+1)!} \cdots \frac{L_n^{2d_n}}{(2d_n+1)!} - \frac{V_{g,n}(2 \mathbf{L})}{V_{g,n}}$
is bounded above by
\begin{align}
&\frac{Cn}{2g-3 + n} \cdot \sum_{\mathbf{d}} |\mathbf{d}|^2 \frac{L_1^{2d_1}}{(2d_1+1)!} \cdots \frac{L_n^{2d_n}}{(2d_n+1)!} \nonumber \\
= &\frac{Cn}{2g-3 + n} \cdot \sum_{i,j} \sum_{\mathbf{d}} d_i d_j \cdot \frac{L_1^{2d_1}}{(2d_1+1)!} \cdots \frac{L_n^{2d_n}}{(2d_n+1)!}. \label{ijsummand}
\end{align}

The $i,j$ summand in \eqref{ijsummand} splits as the product

\begin{equation} \label{splitproduct}
\begin{cases}
P(L_k) \prod_{\ell \neq k} \frac{\sinh(L_\ell)}{L_\ell} &\text{ if } i = j = k \\
Q(L_i) Q(L_j) \prod_{\ell \neq i,j} \frac{\sinh(L_{\ell})}{L_{\ell}} &\text{ if } i \neq j,
\end{cases}
\end{equation}
where $P(x) = \sum_{d \geq 0} d^2 \cdot \frac{x^{2d}}{(2d+1)!}$ and $Q(x) = \sum_{d \geq 0} d \cdot \frac{x^{2d}}{(2d+1)!}.$  The product in the first case of
\eqref{splitproduct} is uniformly comparable to $L_k^2 \prod_{i = 1}^n \frac{\sinh(L_i)}{L_i},$ and the product in the second case of \eqref{splitproduct} is uniformly comparable
to $L_i L_j \cdot \prod_{i = 1}^n \frac{\sinh(L_i)}{L_i}.$

\end{enumerate}

The lower bound follows.
\end{proof}

\subsection{Volumes of boundary strata.} Our next results, combined, are a variant of \cite[Lemma 3.2]{MirzakhaniPetri:Lengths}, which in turn extends \cite[Lemma 3.3]{Mirzakhani:Growth}. 

\begin{lem}\label{L:ProductBound}
There exists a  constant $C_1$  such that 
$$\frac1{V_g} \prod_{i=1}^q V_{g_i, n_i}
\leq           \left( \frac{C_1}{g}\right)^{q+q'-2},
$$
provided
\begin{enumerate}
\item $q'$ is the number of $i$ with $(g_i, n_i) \notin \{(0,3),(1,1)\},$
\item $q\geq 2$, $g\geq 2$,  $k\leq g/4$, $q+q'>2$,
\item $\sum_{i=1}^q n_i = 2k$,
\item $\sum_{i=1}^q g_i = g+q-k-1$, and 
\item $2g_i-3+n_i \geq 0$ and $n_i\geq 1$ for all $i=1, \ldots, q$. 
\end{enumerate}
If, additionally, $q=2$ and $n_1=n_2=k$ and both $2g_i + k - 3  \geq b$, we have 
$$\frac{V_{g_1, k}  V_{g_2,k}}{V_g} \leq C_1 \frac{b^b}{g^{b+1}}.$$
\end{lem}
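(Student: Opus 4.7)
The plan is to upgrade the general bound $(C_1/g)^{q+q'-2}$ from the first part of Lemma \ref{L:ProductBound} (which with $q=2$, $q'=2$ gives only $(C_1/g)^2$) using the Mirzakhani--Zograf volume asymptotic reviewed in Appendix \ref{A:VolPoly}: up to bounded multiplicative factors,
$$V_{g,n} \sim \frac{(2g-3+n)!\,(4\pi^2)^{2g-3+n}}{\sqrt{g}}$$
uniformly in the regime relevant here. Note that when $b \geq 1$, the hypothesis $2g_i + k - 3 \geq b$ automatically excludes the exceptional components $(g_i,n_i)\in\{(0,3),(1,1)\}$, so $q'=2$; the case $b=0$ is already implied by the general part of the lemma (with $C_1$ adjusted).

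Set $a_i := 2g_i + k - 3$ and $a := 2g-3$. Substituting the asymptotic, the relation $g_1 + g_2 = g+1-k$ from condition (4) gives $a_1 + a_2 = 2g-4$, so the exponent on $4\pi^2$ equals $-1$ and that factor contributes only a constant. Using $k \leq g/4$, if both $g_i \geq 1$ then $g_1 g_2 \geq g_1 + g_2 - 1 \geq 3g/4$, so the $\sqrt{g}/\sqrt{g_1 g_2}$ factor is $O(1)$. The remaining piece is the factorial ratio. Writing $a! = (2g-3)(2g-4)!$,
$$\frac{a_1!\,a_2!}{a!} = \frac{1}{2g-3}\cdot\binom{2g-4}{a_1}^{-1}.$$
By unimodality and symmetry of the binomial coefficient, the constraint $a_1\in[b,2g-4-b]$ (forced by $a_1,a_2\geq b$) gives $\binom{2g-4}{a_1}\geq\binom{2g-4}{b}$. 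For $b\leq g-3$, every factor of the numerator in
$$\binom{2g-4}{b} = \frac{(2g-4)(2g-5)\cdots(2g-3-b)}{b!}$$
is at least $g$, so $\binom{2g-4}{b}\geq g^b/b!$. Using $b!\leq b^b$,
$$\frac{a_1!\,a_2!}{a!}\leq\frac{b!}{(2g-3)\,g^b}\leq\frac{b^b}{g^{b+1}},$$
and the claim follows.

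The main obstacle I foresee is the handling of the corner cases: (i) when one of the $g_i$ is zero or one, where the $1/\sqrt{g}$ form of the Mirzakhani--Zograf asymptotic must be replaced by a direct bound on $V_{0,k}$ or $V_{1,k}$ (alternatively, the general part of the lemma can absorb such cases since the target is weaker than the general bound whenever $g_i$ is small enough to force $a_i$ to be small); and (ii) the regime $g-3<b\leq g-2$, where the crude estimate $\binom{2g-4}{b}\geq g^b/b!$ fails but direct Stirling shows $\binom{2g-4}{b}$ is essentially maximal, so $1/\binom{2g-4}{b}$ decays exponentially in $g$ and trivially dominates $b^b/g^{b+1}$. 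Both corners should be routine.
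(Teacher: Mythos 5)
Your proposal is correct and follows essentially the same route as the paper: both reduce the ratio to a constant times $\frac{1}{g}\binom{2g-4}{2g_1+k-3}^{-1}$ via the Mirzakhani--Zograf factorial asymptotics and then bound the binomial coefficient from below using unimodality and $\binom{\ell}{b}\geq \ell^b/b^b$. Your treatment is somewhat more explicit than the paper's (which compresses this into two lines), particularly in flagging the corner cases of small $g_i$ and $b$ near $g$, but the underlying argument is identical.
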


\begin{proof}
Our assumptions imply 
$$(2g-3) - \sum (2g_i - 3 +n_i) = q-1.$$
 The asymptotic \eqref{E:asym} implies that there is a $D_0>0$ such that for all $g\geq 2$, 
\begin{equation}
\label{E:lower}
V_{g} \geq \frac{D_0}{\sqrt{g}} (2g-3)! (4 \pi^2)^{2g-3}.    
\end{equation}
 Thus  \eqref{E:upper} implies that 
$$
\frac1{V_g} \prod V_{g_i, n_i}
\leq
(C_0/D_0)\cdot (C_0 / 4\pi^2) ^{q-1} \cdot \frac{\sqrt{g}}{ \prod \max \{1, \sqrt{g_i} \}} \cdot \frac{\prod (2g_i-3+n_i)!}{(2g-3)!}.
$$

\begin{sublem}
Given our assumptions, 
$$\frac{\sqrt{g}}{ \prod \max \{1,\sqrt{g_i} \}}$$
is bounded above by a constant. 
\end{sublem}
\begin{proof}
Since $q\leq k+1$, we  have that $q\leq g/4+1$. We also have that $\sum g_i \geq g/2$. 

In general, if $\sum_{i=1}^q x_i = x$ and all $x_i\geq 1$, then 
$$ \prod x_i \geq x-(q-1).$$
The sublemma follows. 
\end{proof}

The first claim now follows from the  inequality 
$$\prod_{i=1}^{q'} \ell_i! \leq (\ell - q'+1)!$$
for positive integers $\ell_i$ that sum to $\ell$. 

 To   get the second claim, note that under the additional assumptions the arguments above imply an upper bound of a constant times 
$$ \frac{1}{g} {{2g-4} \choose 2g_1 +k -3}^{-1}.$$

Thus the bound 
$$ {\ell \choose b} \geq \frac{\ell^b}{b^b}$$
gives the second claim.  
\end{proof}

The second part of the previous  result implies \cite[Lemma 3.3]{Mirzakhani:Growth}, which we restate for convenience. 

\begin{cor}[Mirzakhani]\label{C:EasierSum}
For all non-negative integers $b$ there is a constant $C=C(b)$ such 
$$\sum \frac{V_{g_1,k} V_{g_2,k}}{V_g} \leq C g^{-b-1},$$
where the sum is over triples $(g_1, g_2, k)$ with both $2g_i+ k-3 \geq b$ that correspond to pinching a multi-curve with $k$ components whose complement has two components, of genus $g_1$ and $g_2$.  
\end{cor}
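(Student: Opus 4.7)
My plan is to extract from the proof of the second part of Lemma \ref{L:ProductBound} the intermediate bound
$$
\frac{V_{g_1,k}V_{g_2,k}}{V_g} \;\leq\; \frac{C}{g}\,{2g-4 \choose 2g_1+k-3}^{-1}
$$
for an absolute constant $C$, valid when $g_1 \leq g_2$ (a convention we may impose on the sum, at the cost of a harmless factor of $2$ and a trivial correction at the diagonal $g_1 = g_2$). Keeping the binomial coefficient intact, rather than reducing it to the $b^b/g^{b+1}$ form stated in the lemma, is what allows the sum over triples to converge with the correct rate; the binomial grows rapidly as its lower index moves into the bulk, and we will exploit this.

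Next I would reparametrize by setting $j := 2g_1 + k - 3$. The Euler-characteristic identity $g_1 + g_2 + k = g+1$ forces $k = j - 2g_1 + 3$ and $g_2 = g - 2 + g_1 - j$, so the triple $(g_1, g_2, k)$ is determined by $(j, g_1)$. For each fixed $j$, the admissible values of $g_1$ form an interval of length $O(j)$, while the hypothesis $2g_i+k-3 \geq b$ combined with $g_1 \leq g_2$ restricts $j$ to $b \leq j \leq g-2$. Thus the sum is at most
$$
\frac{C'}{g}\sum_{j=b}^{g-2} \frac{j}{{2g-4 \choose j}}.
$$

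To handle this one-variable sum, I would exploit that the ratio of consecutive terms
$$
\frac{(j+1)/{2g-4 \choose j+1}}{j/{2g-4 \choose j}} \;=\; \frac{(j+1)^2}{j(2g-4-j)}
$$
is bounded away from $1$ for $j$ small compared to $g$, giving geometric decay on the initial segment, while later terms are dwarfed by the exponentially large central binomial coefficient. Hence the whole sum is bounded by a constant (depending only on $b$) times its initial term $b/{2g-4 \choose b}$, which is $O_b(g^{-b})$ because ${2g-4 \choose b} \geq c_b g^b$ for some $c_b > 0$. Combining with the $C'/g$ prefactor yields the stated $O(g^{-b-1})$.

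The main obstacle is really just bookkeeping: one must check that the intermediate bound extracted in the first step extends, up to a harmless factor, to triples with $k > g/4$ that fall outside the explicit hypotheses of Lemma \ref{L:ProductBound}. In such regimes, however, $2g_1 + k - 3$ lies deep in the bulk of $\{0, 1, \ldots, 2g-4\}$, so the binomial coefficient ${2g-4 \choose 2g_1+k-3}$ is astronomically large and easily absorbs any polynomial loss from relaxing hypotheses. Those triples therefore contribute negligibly to the sum, and the argument above delivers the desired bound.
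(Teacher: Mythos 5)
Your argument is correct, but it takes a genuinely different route from the paper's. The paper's proof is a two-line counting argument: applying the second claim of Lemma \ref{L:ProductBound} with $b$ replaced by $b+2$ makes every term $O_b(g^{-b-3})$, and since there are only $O(g^2)$ triples this handles the bulk of the sum; the finitely many remaining triples with some $2g_i+k-3\in\{b,b+1\}$ (there are $O_b(1)$ of them, since such a constraint pins down $(g_i,k)$ up to finitely many choices and hence determines the triple) are each bounded by $C_1 b^b g^{-b-1}$ directly. You instead retain the sharper intermediate bound $\frac{C}{g}\binom{2g-4}{2g_1+k-3}^{-1}$ and sum it explicitly in the parameter $j=2g_1+k-3$. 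This is more work but buys two things: it exhibits the sum as dominated by the extreme terms $j=b$, and your explicit treatment of the triples with $k>g/4$ --- which fall outside the stated hypotheses of Lemma \ref{L:ProductBound}, but for which the factor $\sqrt{g}/\prod\max\{1,\sqrt{g_i}\}$ costs at most $\sqrt{g}$ while the binomial coefficient is exponentially large --- addresses a case the paper's own proof passes over silently. One cosmetic correction: the number of admissible $g_1$ for fixed $j$ is $O(j+1)$ rather than $O(j)$ (for $j=0$ there are the two pairs $(g_1,k)=(0,3)$ and $(1,1)$), so the leading term of your one-variable sum should read $(b+1)/\binom{2g-4}{b}$; otherwise the case $b=0$ degenerates. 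With that adjustment the ratio of consecutive terms at $j=b$ is $O_b(1/g)$ and the terms decrease essentially throughout the range, so the tail of at most $g$ terms is indeed $O$ of the leading term, as you claim.
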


\begin{proof}
There are $O(g^2)$ terms, so summing just the terms with $2g_i+ k-3 \geq b+2$ gives the desired bound when summing over all these terms, by the result above. 

Given that $b$ is constant, the number of triples where one of the $2g_i+ k-3 $ is $b+1$ or $b$ is $O(1)$, so again we get the result summing over these. 
\end{proof}

For more complicated applications we also need the following.

\begin{lem}\label{L:CountStrata}
The number of strata in the Deligne-Mumford compactification where $k$ curves have been pinched and $q$ components produced, $q'$ of which aren't spheres with three marked points or tori with one marked point, is at most  
$$2^{k+q^2} g^{q'-1}.$$
\end{lem}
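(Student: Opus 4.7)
The plan is to identify each stratum with the isomorphism class of a stable dual graph together with genus labels on its vertices, and then bound each piece of this data separately. To get an upper bound, I would overcount by labeling the $q$ vertices $1, 2, \ldots, q$ (ignoring graph automorphisms), and then for a fixed labeled multigraph count the admissible genus labelings.

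First, I would count vertex-labeled multigraphs on $q$ vertices with $k$ (unordered) edges, where self-loops and multi-edges are permitted. Such a multigraph is specified by a multiplicity $m_{\{i,j\}} \geq 0$ on each of the $\binom{q+1}{2}$ unordered pairs (self-loops included), subject to $\sum m_{\{i,j\}} = k$. By stars and bars, the number of such multigraphs equals
$$\binom{\binom{q+1}{2} + k - 1}{k} \leq 2^{\binom{q+1}{2} + k - 1} \leq 2^{q^2 + k},$$
using $\binom{n+k-1}{k} \leq 2^{n+k-1}$ and $\binom{q+1}{2} \leq q^2$. Note that the inequality $\binom{q+1}{2} \leq q^2$ leaves an exponential slack of $2^{q(q-1)/2 + 1}$, which will absorb lower-order factors below.

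Second, for each labeled multigraph I would count genus labelings with exactly $q'$ non-trivial vertices. Choose the set $Q'$ of non-trivial vertices in $\binom{q}{q'} \leq 2^q$ ways; on its complement, the constraint that $(g_i, n_i) \in \{(0,3), (1,1)\}$ determines each $g_i$. On $Q'$ the labels are non-negative integers whose sum $G$ satisfies $G \leq g - k + q - 1 \leq g$, where the last inequality uses $k \geq q - 1$ (forced by connectedness of the dual graph). The number of such tuples is at most
$$\binom{G + q' - 1}{q' - 1} \leq \binom{g + q' - 1}{q' - 1} \leq g^{q'-1} \cdot \frac{(1 + (q'-1)/g)^{q'-1}}{(q'-1)!},$$
and the resulting multiplicative overhead together with the subset factor $\binom{q}{q'}$ is an exponentially small function of $q$ that comfortably fits in the slack $2^{q(q-1)/2 + 1}$ identified above.

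The main obstacle is purely bookkeeping: one has to verify that the combined auxiliary factors $\binom{q}{q'}$ and $\binom{g+q'-1}{q'-1}/g^{q'-1}$ really do fit inside the graph-counting slack, particularly for small $q$ where that slack is modest. For $q \geq 4$ the inequality $q(q-1)/2 + 1 \geq 2q - 1$ handles this immediately, and for $q \in \{1,2,3\}$ the tighter exact value of $\binom{\binom{q+1}{2} + k - 1}{k}$ is much smaller than the crude bound $2^{q^2 + k}$, giving enough room to absorb the remaining factors by direct inspection. Combining the two counts then yields the stated bound of $2^{q^2 + k} g^{q' - 1}$.
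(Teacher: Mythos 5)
Your proof is correct and follows essentially the same route as the paper: both count edge structures on labeled components by stars and bars over the $\binom{q+1}{2}$ unordered pairs, yielding the factor $\binom{k+q(q+1)/2-1}{k}\leq 2^{k+q(q+1)/2-1}$, and then count genus assignments separately. The only difference is that the paper gets the genus count more cheaply as (at most $q$) $\cdot\, g^{q'-1}$ by fixing all but one of the nontrivial genera in $\{0,\dots,g-1\}$, which avoids your need to absorb the composition-count overhead $\binom{g+q'-1}{q'-1}/g^{q'-1}$ and the factor $\binom{q}{q'}$ into the $2^{q(q-1)/2+1}$ slack.
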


This  bound is not sharp, but is sufficient for our purposes. 

\begin{proof}
First pick how many of the $q-q'$ small components have genus 1 and how many have genus 0. There are at most $q$ possibilities. The remaining components each have genus at most $g-1$, and the sum of the $g_i$ is known, so there are at most $g^{q'-1}$ many ways to pick the genera of the remaining components. 


There are  $q(q+1)/2$ ways to add a node, since one simply needs to pick the two components (possibly the same) the node will be on. Thus the number of ways to add the nodes is bounded by the number of $q(q+1)/2$-tuples of non-negative integers that add up to $k$, which is 
$${k +  q(q+1)/2 - 1 \choose k} \leq 2^{k +  q(q+1)/2 - 1}.$$
Thus the number of strata is bounded by this quantity times $q g^{q'-1}.$
\end{proof}

\subsection{Separating curves.} We now apply the results above, following \cite{Mirzakhani:Growth} and  \cite{MirzakhaniPetri:Lengths}.

\begin{cor}\label{C:SepVol}
For any integer $a\geq 0$, the probability that a surface in $\cM_g$ has a multi-geodesic of length at most $L$ whose complement has two components, each of area at least $2\pi a$ is 
$$  O(e^{2L} g^{-a}) .$$

For fixed $k$, the average number of such multi-geodesics with exactly $k$ components is 
$$  O(e^{L} L^{2k} g^{-a}) .$$

The average number of  such  multi-geodesics 
bounding a subsurface of area exactly $2\pi a$ is 
$$ O(e^{L/2} (L^2+L^p) g^{-a})$$
for some $p\geq 2$.  
\end{cor}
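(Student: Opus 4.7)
The plan is to apply Mirzakhani's integration formula type by type to multi-geodesics of a fixed topological type, bound the resulting integrands using Lemma~\ref{L:sinh}, bound the ratios of $\psi$-class volumes using Lemma~\ref{L:ProductBound}, and enumerate topological types using Lemma~\ref{L:CountStrata}. A topological type of separating multi-curve with two complementary components is specified by $(k, g_1, n_1, g_2, n_2)$ with $n_1 + n_2 = 2k$ and $g_1 + g_2 = g - k + 1$; the area condition that each complement has area at least $2\pi a$ becomes $2g_i + n_i - 2 \geq a$. For the first claim I would control the probability by the expected count (Markov); for the second and third claims the expected counts are the direct goal.

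For each type, Mirzakhani's integration formula produces an integral of $\prod_{i=1}^k \ell_i \cdot V_{g_1,n_1}(\cdot)V_{g_2,n_2}(\cdot)/V_g$ over $\{\sum \ell_i \leq L\}$, divided by a symmetry factor. Since each curve length $\ell_i$ appears twice across the two boundary data, Lemma~\ref{L:sinh} bounds the volume polynomial product by $V_{g_1,n_1}V_{g_2,n_2} \cdot \prod_i \sinh(\ell_i/2)^2/(\ell_i/2)^2$, with $\sinh(\ell/2)^2/(\ell/2)^2 = O(e^{\ell})$. The factor $V_{g_1,n_1}V_{g_2,n_2}/V_g$ is controlled by Lemma~\ref{L:ProductBound}: applied with $b$ chosen so that $b+1$ exceeds $a$ by enough to absorb the polynomial-in-$g$ count of topological types furnished by Lemma~\ref{L:CountStrata}, one obtains $O(g^{-a})$. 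The resulting length integral is handled via the Dirichlet identity $\int_{\sum \ell_i \leq L} \prod \ell_i \, d\ell = L^{2k}/(2k)!$, giving $O(e^{L}L^{2k}/(2k)!)$ per type; for the second claim this is the final estimate, yielding the $O(e^L L^{2k}g^{-a})$ bound for fixed $k$. For the first claim, summing over $k$ via $\sum_{k\geq 1} L^{2k}/(2k)! \leq \cosh L \leq e^L$ produces the additional factor $e^L$, giving $O(e^{2L}g^{-a})$.

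The third claim is the most delicate and the main obstacle. Here the subsurface has area \emph{exactly} $2\pi a$, so (choosing the smaller component) $2g_1 + n_1 - 2 = a$ forces both $g_1$ and $n_1$ to be bounded in terms of $a$, so only $O_a(1)$ topological types contribute. Because the multi-curve bounds the small subsurface, every component curve has at least one side in the small piece; hence no curve contributes both of its sides to the large piece. I would then apply Lemma~\ref{L:sinh} \emph{only} on the large side, obtaining a sinh product whose exponential growth is $e^{(\sum \ell_i)/2} \leq e^{L/2}$ (one half-exponent per curve-side landing in the large piece, of which there are at most $k$), while the small side's volume polynomial $V_{g_1,n_1}(\ell_1,\ldots,\ell_{n_1})$ is merely polynomial in the $\ell_i$ of bounded degree because $(g_1,n_1)$ is bounded. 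The ratio $V_{g_1,n_1}V_{g_2,n_2}/V_g$ is again $O(g^{-a})$ from Lemma~\ref{L:ProductBound}, and combining everything produces $O(e^{L/2}L^p g^{-a})$ for some $p = p(a)$. The main obstacle throughout is bookkeeping: ensuring the topological enumeration via Lemma~\ref{L:CountStrata} does not accumulate spurious factors of $g$, and in the third claim carefully distinguishing the two sides so that the sinh estimate is invoked asymmetrically to produce $e^{L/2}$ rather than $e^L$.
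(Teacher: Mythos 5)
Your overall strategy is the paper's: apply Mirzakhani's integration formula type by type, bound the volume polynomials via the exponential upper bound of Lemma \ref{L:sinh}, evaluate the Dirichlet integral as $L^{2k}/(2k)!$, sum over $k$ using $\sum_k L^{2k}/(2k)! \le e^L$ to pass from $e^L L^{2k}$ to $e^{2L}$, and, for the third claim, treat the two sides asymmetrically: since area exactly $2\pi a$ forces $(g_1,n_1)$ to be bounded, the small side's volume polynomial is a polynomial of bounded degree in the boundary lengths, and only the large side contributes the exponential factor $e^{\sum\ell_i/2}\le e^{L/2}$. This is exactly how the paper handles the second and third claims.

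The one step that does not work as written is how you extract $g^{-a}$ from the sum over topological types. You propose to apply Lemma \ref{L:ProductBound} ``with $b$ chosen so that $b+1$ exceeds $a$ by enough to absorb the polynomial-in-$g$ count of types.'' But $b$ is not a free parameter: the second part of Lemma \ref{L:ProductBound} requires \emph{both} $2g_i+k-3\ge b$, and the hypothesis that each component has area at least $2\pi a$ only guarantees $2g_i+k-3\ge a-1$. For the extremal types with $\min_i(2g_i+k-3)=a-1$ you are forced to take $b=a-1$, giving a per-type bound of only $O(g^{-a})$; multiplying by the roughly $g$ choices of $g_1$ (or by the $2^{k+q^2}g^{q'-1}$ count from Lemma \ref{L:CountStrata}) then loses a factor of $g$ and yields $O(g^{-a+1})$. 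The repair is the dichotomy packaged in Corollary \ref{C:EasierSum}, which is what the paper actually invokes: for types with both $2g_i+k-3\ge a+1$ the per-type bound improves to $O(g^{-a-2})$, which beats the $O(g^2)$ total number of triples $(g_1,g_2,k)$, while the remaining types have $\min_i(2g_i+k-3)\in\{a-1,a\}$, which pins down one side's genus and boundary number, so there are only $O_a(1)$ of them and the per-type bound $O(g^{-a})$ suffices. You should either cite that corollary or reproduce its two-case argument; inflating $b$ uniformly is not available.
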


\begin{proof}
Denote the genera of the two components by $g_1$ and $g_2$. 
The area of a subsurface is $2\pi$ times its Euler characteristic, so area at least $2\pi a$ is equivalent to $2g_i + k -2 \geq a$, where the subsurface has genus $g_i$ and $k$ boundary components. 

 The probability in question is bounded by 

$$\frac1{V_g} \sum_{2g_i+k-2\geq a} \int_{L_1+ \cdots +L_k \leq L} L_1 \cdots L_k V_{g_1, k}(L_1, \ldots, L_k) V_{g_2, k}(L_1, \ldots, L_k) \; dL_1 \cdots dL_k.$$


Using the exponential upper bound, this is at most 

$$e^L \sum_{2g_1+k-2\geq a} \frac{V_{g_1, k} V_{g_2, k}}{V_g} \int_{L_1+ \cdots +L_k \leq L} L_1 \cdots L_k \; dL_1 \cdots dL_k.$$

Using the value of this integral given in \cite[Proof of Lemma 4.9]{Mirzakhani:Growth},  this is equal to 

$$e^L \sum_{2g_i+k-2\geq a} \frac{V_{g_1, k} V_{g_2, k}}{V_g}\frac{ L^{2k}}{(2k)!}.$$

Now, since $e^L$ is greater than any term in its Taylor series, 
this is bounded by 

$$e^{2L} \sum_{2g_i+k-2\geq a} \frac{V_{g_1, k} V_{g_2, k}}{V_g}.$$
So Corollary \ref{C:EasierSum} gives the first claim.

The other claims are similar: For example, when $2g_1+k=a$ is fixed,  we can assume $g_1$ and $k$ are fixed. In comparison to the first claim, now $V_{g_1, k}(L_1, \ldots, L_k)$  and  $\frac{ L^{2k}}{(2k)!}$ bounded by fixed polynomials in $L$. 
\end{proof}

\section{Local Weyl law} \label{localweyllawappendix}  


We start with the following local Weyl law. 

\begin{prop} \label{localweyllaw}
Fix $\mu_0 > 0.$  For every closed hyperbolic surface $X,$ the number of spectral parameters $r_n(X) = \sqrt{\frac{1}{4} - \lambda_n(X)}$ satisfying $|r_n(X)| \in [-1 + t, 1 + t]$ is bounded above by 


$$(g-1)  \left( A |t| + B \right) + C  N_X  \logp\left(  \frac{\mu_0}{\mathrm{sys}(X)} \right) + D  E_X,$$ 
where 
\begin{itemize}
\item
$N_X$ is the number of primitive closed geodesics of length at most $\mu_0,$ 

\item
$E_X$ is the number of exceptional parameters  $r_n(X) \in (0\cdot i,1/2 \cdot i].$

\item
$\mathrm{sys}(X)$ is the systole of $X,$ and 

\item $\logp(x) = \max \{0, \log(x) \} + 1$. 

\item
$A,B,C,D$ are constants depending only on $\mu_0.$  
\end{itemize} 
\end{prop}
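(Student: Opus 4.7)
The plan is to apply the Selberg trace formula (Theorem \ref{T:TF}) to a test function adapted to the spectral window around $\pm t$. Fix a smooth, even, non-negative function $\phi$ supported in $[-\mu_0, \mu_0]$ whose Fourier transform $\widehat{\phi}$ is non-negative on $\bR$ and satisfies $\widehat{\phi}(r) \geq 1$ for $|r| \leq 1$; such a $\phi$ exists by taking a convolution square of a suitable even bump supported in $[-\mu_0/2, \mu_0/2]$. Set $f_t(x) := 2\phi(x)\cos(tx)$, so that $f_t$ is smooth, even, supported in $[-\mu_0, \mu_0]$, with Fourier transform
$$ \widehat{f_t}(r) = \widehat{\phi}(r - t) + \widehat{\phi}(r + t). $$
Then $\widehat{f_t} \geq 0$ on $\bR$ and $\widehat{f_t}(r) \geq 1$ whenever $|r| \in [t-1, t+1]$, while $\|f_t\|_\infty \leq 2\|\phi\|_\infty$ uniformly in $t$.

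Applying Theorem \ref{T:TF} to $f_t$ and splitting the spectral sum according to whether $r_n$ is real or exceptional yields
$$ \#\{r_n \text{ real}, |r_n| \in [t-1,t+1]\} \leq (g-1)\int_{-\infty}^\infty \widehat{f_t}(r)\, r\, \tanh(\pi r)\, dr + F_{f_t, \mathrm{all}}(X) - \sum_{\text{exceptional}} \widehat{f_t}(r_n). $$
A direct computation starting from the integral representation of $\widehat{\phi}$ shows $|\widehat{f_t}(is)|$ is bounded for $t \in \bR$ and $s \in [0, 1/2]$ by a constant depending only on $\mu_0$; this absorbs the exceptional sum above into a term of size $O(E_X)$. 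Combined with the trivial bound of $E_X$ on the number of imaginary $r_n$ in the window, this accounts for the $D \cdot E_X$ contribution.

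For the identity integral, Schwartz decay of $\widehat{\phi}$ gives $\int \widehat{f_t}(r)\, |r|\, dr = O(|t|+1)$ with implicit constants depending only on $\mu_0$, producing the $(g-1)(A|t| + B)$ term. For $F_{f_t, \mathrm{all}}(X)$, compact support of $f_t$ forces only primitive closed geodesics of length at most $\mu_0$ to contribute, so there are at most $N_X$ of them. For each such geodesic of length $\ell$, using $\|f_t\|_\infty \leq 2\|\phi\|_\infty$ one gets
$$ |F_{f_t}(\ell)| \leq \|\phi\|_\infty \cdot \ell \sum_{1 \leq k \leq \mu_0/\ell} \frac{1}{\sinh(k\ell/2)} = O\!\left(\logp(\mu_0/\ell)\right), $$
by splitting the sum according to whether $k\ell/2$ is small (where $\sinh(k\ell/2) \geq k\ell/2$ gives a harmonic-type sum) or large (where $\sinh$ grows exponentially, producing a bounded geometric sum). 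Since $\ell \geq \mathrm{sys}(X)$, summing over the at most $N_X$ contributing geodesics yields $C N_X \logp(\mu_0/\mathrm{sys}(X))$, which is trivially dominated by $C N_X \logp((\mu_0 + |t|)/\mathrm{sys}(X))$.

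The main difficulty is engineering the test function so that every implicit constant depends only on $\mu_0$, not on $t$ or $X$. The key leverage is that $f_t$ is the fixed function $\phi$ multiplied by a unit-modulus character, so $\|f_t\|_\infty$ remains uniformly bounded; the shift structure $\widehat{f_t}(r) = \widehat{\phi}(r-t) + \widehat{\phi}(r+t)$ makes the $t$-dependence of the identity integral transparent; and the explicit integral expression for $\widehat{f_t}(is)$ shows directly that it is bounded independently of $t$ on $s \in [0,1/2]$. Once these uniformities are in place, the rest of the argument is routine bookkeeping.
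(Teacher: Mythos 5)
Your proposal is correct and follows essentially the same route as the paper: a fixed even bump $\phi$ supported in $[-\mu_0,\mu_0]$ with $\widehat{\phi}\geq 0$, modulated by $\cos(tx)$ so that $\widehat{f_t}(r)=\widehat{\phi}(r-t)+\widehat{\phi}(r+t)$ detects the window, followed by term-by-term bounds on the identity integral, the geometric sum via $\|f_t\|_\infty$ and $x/\sinh(x/2)\leq 2$, and the exceptional contribution via $E_X$. The only differences are cosmetic (normalizing $\widehat{\phi}\geq 1$ on $[-1,1]$ instead of dividing by $m$ at the end, and using the true support $[-\mu_0,\mu_0]$ of $f_t$ rather than the paper's enlarged $[-\mu_0-|t|,\mu_0+|t|]$).
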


\begin{proof}
We prove this using the trace formula, which is recalled in Theorem \ref{T:TF} together with our notation and normalizations.  Let $f$ be an even, smooth real-valued function supported on $[-\mu_0,\mu_0]$ for which $\widehat{f}$ is non-negative and $\widehat{f}(r) > 0$ for $r \in [-1,1].$  Define $$m = \min \{ \widehat{f}(r)/2: r \in [-1,1] \},$$  
and $f_t(x) = f(x) \cdot \cos(tx)$, so that 
$$\widehat{f_t}(r) = \frac{1}{2} \left( \widehat{f}(r - t) + \widehat{f}(r + t) \right).$$
Note
\begin{eqnarray*}
&& m \cdot \# \{ r_n(X) \in [-1 + t, 1 + t] \} + \sum_{r_n = ib_n} \int_{\mathbb{R}} f(x) e^{b_n x} \cos(tx) dx \\
&\leq& \sum_n \widehat{f_t}(r_n(X)) \\
&=& 
F_{f_t, \mathrm{all}}(X)
+ (g-1)   \int_{-\infty}^\infty \widehat{f_t}(r) r \tanh(\pi r) dr. 
\end{eqnarray*}

The function $\frac{x}{\sinh(x/2)}$ is strictly positive and at most 2 for $x \geq 0.$  So for primitive geodesics $\gamma,$ the fact that $f_t$ is supported on 
 $[-\mu_0, \mu_0]$
 implies
\begin{eqnarray*}
\left|F_{f_t}(\ell(\gamma)) \right| &\leq& \|f_t\|_{\infty}  \sum_{k \leq \frac{\mu_0}{\ell(\gamma)} } \frac{1}{k}  
\\&\leq& \|f\|_{\infty}  \logp\left( \frac{\mu_0}{\mathrm{sys}(X)} \right).
\end{eqnarray*}



So the first summand $F_{f_t, \mathrm{all}}(X)$ above is bounded above by $$C_0 N_X \logp\left( \frac{\mu_0}{\mathrm{sys}(X)} \right),$$ where $C_0 = \|f\|_{\infty}.$


For the second summand, changing variables shows immediately that the integral is bounded above by $A_0 |t| + B_0$ for some absolute constants $A_0, B_0.$ 
Note also that
$$ \sum_{r_n = ib_n} \int_{\mathbb{R}} f(x) e^{b_n x} \cos(tx) dx  \geq - D_0  E_X, \text{ where } D_0 = \int_{\mathbb{R}} |f(x)| e^{ \frac{1}{2} x } dx.$$

The result follows, taking $(A,B,C,D) = \frac{1}{m} (A_0,B_0,C_0, D_0).$
\end{proof}

\begin{cor}\label{C:AppendixC}
Fix a Schwartz function $h$ and $\mu_0>0$ sufficiently small. Then there is a constant $C=C(h,\mu_0)$ such that for all $g$ sufficiently large 
$$\sum_{r_n \text{ real}}  |h(r_n)| \leq C \; g \logp \left(\frac{1}{\mathrm{sys}(X)}\right) . $$
\end{cor}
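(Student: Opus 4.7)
The plan is to apply Proposition \ref{localweyllaw} on a partition of the real line into unit intervals, weighted by the Schwartz function $|h|$. For $t\in \bZ$, set $M_t = \sup_{r\in [-1+t, 1+t]} |h(r)|$. Since $h$ is Schwartz, the sequence $\{M_t\}_{t\in\bZ}$ decays faster than any polynomial, so $\sum_{t\in \bZ} (1+|t|)^k M_t < \infty$ for every $k\ge 0$. This rapid decay will absorb all polynomial-in-$|t|$ factors that appear when Proposition \ref{localweyllaw} is summed over $t$.

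Before doing the summation, I will handle the two auxiliary quantities $N_X$ and $E_X$. First, taking $\mu_0$ smaller than the Margulis constant for hyperbolic surfaces guarantees that any two distinct primitive closed geodesics of length at most $\mu_0$ are simple and pairwise disjoint, and hence extend to a pants decomposition; this gives $N_X \le 3g-3 = O(g)$ uniformly in $X$. Second, the number of exceptional eigenvalues satisfies $E_X = O(g)$ by a classical bound of Buser (sharpened by Otal--Rosas); this can also be recovered directly from the trace formula with an appropriate non-negative test function.

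With these in hand, I bound
\[
\sum_{r_n\text{ real}} |h(r_n)| \le \sum_{t\in \bZ} M_t \cdot \#\{r_n\text{ real}: |r_n|\in[-1+t,1+t]\}
\]
and apply Proposition \ref{localweyllaw} inside the sum. The identity contribution becomes $(g-1) \sum_t M_t(A|t|+B) = O(g)$, and the exceptional contribution becomes $D\,E_X \sum_t M_t = O(g)$. For the remaining geodesic term, I use the additive splitting
\[
\logp\!\left(\frac{\mu_0+|t|}{\mathrm{sys}(X)}\right) \le \logp\!\left(\frac{1}{\mathrm{sys}(X)}\right) + \logp(\mu_0+|t|) + O(1),
\]
so that summing against $M_t$ produces $C N_X\bigl(\logp(1/\mathrm{sys}(X)) \sum_t M_t + \sum_t M_t \logp(\mu_0+|t|)\bigr) = O(g \logp(1/\mathrm{sys}(X)))$. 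Combining the three contributions yields the claimed bound.

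The only mild subtleties are (i) splitting the logarithm additively into a systole-dependent piece and a $t$-dependent piece, since a multiplicative split would cost an extra $\logp(1/\mathrm{sys}(X))$ factor that could not be absorbed, and (ii) invoking a linear-in-$g$ bound for $E_X$. Otherwise the argument is routine bookkeeping given Proposition \ref{localweyllaw}.
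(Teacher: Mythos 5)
Your proof is correct and follows essentially the same route as the paper's: sum Proposition \ref{localweyllaw} over a cover of $\bR$ by unit-length windows weighted by $\sup|h|$, use Schwartz decay to absorb the polynomial-in-$|t|$ factors, and invoke $N_X\le 3g-3$ (for $\mu_0$ small) and the Otal--Rosas bound $E_X=O(g)$. The only inessential difference is your worry in point (i): since the target bound already carries a factor $\logp(1/\mathrm{sys}(X))\ge 1$, even a multiplicative split $\logp(xy)\le \logp(x)\logp(y)$ would be absorbed, so the additive split is convenient but not necessary.
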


\begin{proof}
Recall that $E_X \leq 2g-3$ by \cite{otalrosas}, and $N_X \leq 3g - 3$ as long as $\mu_0$ is sufficiently small. The result follows since 
$$ \sum_{k = 0}^\infty p(k) \sup_{[2k ,2k+2]} |h| $$
is finite for any function $p$ of polynomial growth. 
\end{proof}

\bibliography{MM}{}
\bibliographystyle{amsalpha}
\end{document}